\documentclass{ws-ijm}
\usepackage[all]{xy}
\usepackage{enumerate}

\newtheorem{thm}{Theorem}[section]
\newtheorem{prop}[thm]{Proposition}
\newtheorem{coro}[thm]{Corollary}
\newtheorem{lema}[thm]{Lemma}
\newtheorem{rmk}[thm]{Remark}
\newtheorem{defin}[thm]{Definition}
\newtheorem{notat}[thm]{Notation}

\newcommand{\Ad}{\textnormal{Ad}}
\newcommand\id{{\operatorname{id}}}
\newcommand{\diag}{\operatorname{diag}}
\newcommand{\rank}{\operatorname{rank}}
\newcommand{\tr}{\operatorname{tr}}
\newcommand{\Tr}{\operatorname{Tr}}
\newcommand{\lcm}{\operatorname{lcm}}

\begin{document}
\markboth{Francisco Torres-Ayala}
 {Primitivity of amalgamated full free products}

\catchline{}{}{}{}{}

\title{CONDITIONS FOR PRIMITIVITY OF UNITAL AMALGAMATED FULL FREE PRODUCTS OF \\
FINITE DIMENSIONAL C*-ALGEBRAS}

\author{Francisco Torres-Ayala}

\address{Instituto de Matem\'aticas, UNAM Campus Juriquilla, \\ Boulevard Juriquilla 3001, Santiago de Quer\'etaro C.P. 76230,\\
Quer\'etaro, M\'exico \\ \email{tfrancisco@matem.unam.mx} }

\maketitle

\begin{abstract}
We consider amalgamated unital full free
products of the form $A_1*_DA_2$, where $A_1, A_2$ and $D$ are finite dimensional C*-algebras and
there are faithful traces on $A_1$ and $A_2$ whose restrictions to $D$ agree. We provide several
conditions on the matrices of partial multiplicities of the inclusions $D\hookrightarrow A_1$
and $D\hookrightarrow A_2$ that guarantee that the C*-algebra $A_1*_DA_2$ is primitive. If the ranks
of the matrices of partial multiplicities are one, we prove that the algebra $A_1*_DA_2$ is primitive
if and only if it has a trivial center.

\end{abstract}

\keywords{Primitive C*--algebra, Amalgamated Full Free Products}
\ccode{Mathematics Subject Classification 2000: 46L05, 46L059}

\section{Introduction and preliminaries}\label{SectionIntroduction}

A C*-algebra is called primitive if it admits a faithful and irreducible $*$-representation i.e. a $*$-homomorphism
from the algebra to the bounded operators on some Hilbert space such  that it is an isometry and the only closed invariant
subspaces, for its  image, are the trivial ones. If one takes the point of view of using the Jacobson topology to study the structure
of a C*-algebra, then primitive ones are the building blocks. For  discussions and  examples see ~\cite{Kaplansky}, ~\cite{Bedos&Omland-Amenable}, ~\cite{Bedos&Omland-Modular}, ~\cite{Choi} and ~\cite{Murphy}. In ~\cite{Dykema&Torres} we  proved that, under the assumption of
residually finite dimensionality, the only non trivial example of a unital full free product of RFD C*-algebras that failed to be primitive is $\mathbb{C}^2*_{\mathbb{C}}\mathbb{C}^2$. Another way to state this result is to say that the only obstruction, for a unital full free product of RFD C*-algebras, to be primitive is that it has a non trivial center. In this article we give a partial generalization. We now study the amalgamated unital full free product of matrix algebras and provide a criteria that
guarantee the primitivity of $A_1*_DA_2$.  The main results are the following.

\begin{thm}\label{Thm:MainThm}
Consider  unital amalgamated full free products of the form $A_1*_DA_2$ where we assume:
\begin{enumerate}[(i)]
\item there are faithful traces on $A_1$ and $A_2$ whose restrictions to $D$ agree,
\item the ranks of the matrices of partial multiplicities of the inclusions $D \hookrightarrow A_s$, $s=1,2$, are one.
\end{enumerate} 
If the center of $A_1*_DA_2$ is trivial then $A_1*_DA_2$ is primitive.
\end{thm}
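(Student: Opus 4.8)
The plan is to construct an explicit faithful, irreducible representation of $A_1*_DA_2$ on a separable Hilbert space. Because the ranks of the partial multiplicity matrices are one, the inclusions $D\hookrightarrow A_s$ are severely constrained: a rank-one multiplicity matrix means that (up to the central decomposition of $D$) each simple summand of $A_s$ receives all the simple summands of $D$ with multiplicities that factor as an outer product, so the $A_s$ are close to being single matrix algebras over $D$. I would first use hypothesis (i) to realize $A_1*_DA_2$ concretely in its GNS representation coming from the amalgamated free product of the given traces, which is faithful by the standard faithfulness results for amalgamated free products with faithful conditional expectations. This gives a concrete model in which free families of Haar-unitary-like or trace-orthogonal elements are available.

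The main construction would then perturb or compress this trace representation to achieve irreducibility. The standard strategy, as in \cite{Dykema&Torres}, is to produce two elements (or two unitaries) $u\in A_1$ and $v\in A_2$ whose images generate, together with $D$, an irreducible set: one shows that the commutant of the represented algebra is trivial by exhibiting enough freeness to force any commuting operator to be a scalar. Concretely, I would choose generators of $A_1$ and $A_2$ over $D$ in ``sufficiently free position'' and analyze the fixed-point/commutant equations. Under the rank-one hypothesis, the center of $A_1*_DA_2$ is computable in terms of $D$ and the multiplicity data, so the assumption that the center is trivial translates into a concrete non-degeneracy condition on the embeddings that I can feed directly into the commutant computation. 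The logic is that triviality of the center removes the only algebraic obstruction, after which freeness supplies the analytic irreducibility.

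The technical heart, and the step I expect to be the main obstacle, is showing that the commutant of the representation is trivial \emph{given only} that the center is trivial. Triviality of the center rules out obvious non-scalar commuting elements coming from $D$ or from central projections, but an irreducible representation requires ruling out \emph{all} commuting operators, including those arising in the weak closure. I anticipate needing a moment/combinatorial argument in the free-product filtration: one expands a putative commuting element in reduced words alternating between $A_1\ominus D$ and $A_2\ominus D$, and uses the commutation relations with carefully chosen free generators to kill all nontrivial words. The rank-one condition is what makes the resulting system of equations tractable, reducing it essentially to a rank-one (hence scalar) problem on each piece. Finally I would assemble these pieces: faithfulness is automatic from the trace model, irreducibility follows from the triviality of the commutant, and together they exhibit the required faithful irreducible representation, proving that $A_1*_DA_2$ is primitive.
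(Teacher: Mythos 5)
There is a genuine gap, and it is at the foundation of your plan. The GNS representation of the amalgamated free-product trace is faithful on the \emph{reduced} amalgamated free product, not on the unital \emph{full} free product $A_1*_DA_2$ that the theorem concerns: the trace representation factors through the reduced quotient, so faithfulness is lost at the very first step. Worse, a representation with a cyclic trace vector is never irreducible (its commutant is anti-isomorphic to the generated von Neumann algebra), and no compression or perturbation within its quasi-equivalence class can repair this; the reduced-word/moment commutant computation you sketch is the standard tool for proving factoriality or simplicity of \emph{reduced} free products and gives no purchase on the universal algebra, most of whose representations are disjoint from the trace representation. The paper takes the opposite route: hypothesis (i) is used only to conclude (via the embedding results of Armstrong--Dykema--Exel--Li) that $A_1*_DA_2$ is residually finite dimensional, and the faithful irreducible representation is then assembled, in Lemma \ref{Lemma:CompletionCriteria}, as a norm limit of perturbations --- by unitaries commuting with the image of $D$ --- of an infinite direct sum of finite-dimensional representations, with irreducibility of the perturbed representations supplied by the dimension counts on unitary groups of Section 3 (Proposition \ref{Prop:DensityHardVersion} and Theorem \ref{Theorem:dCLesssOrEqualThanN2}).

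The second gap is your claim that under the rank-one hypothesis ``the center is computable in terms of $D$ and the multiplicity data'' and feeds a non-degeneracy condition into a commutant equation; this is precisely what needs proof, and the paper never argues that way. The actual logical structure is by elimination: primitivity is proved directly, without ever computing the center, in all rank-one cases where the dimension inequalities hold (Propositions \ref{Prop:PrimitivityRankOneEasyCase}, \ref{Prop:PrimitivityRankOneMatrixAlgCase}, \ref{Prop:PrimitivityOnlyOnes} and \ref{MatrixMatrix}, covering the exceptional configurations of Remark \ref{Remark:CasesRankOne}), and the single surviving case is disposed of by the explicit isomorphism $M_2*_{\mathbb{C}^2}M_2\simeq M_2(C(\mathbb{T}))$ of Proposition \ref{Prop:ExmapleM2M2}, whose center is visibly nontrivial. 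Your outline contains no mechanism that would distinguish $M_2*_{\mathbb{C}^2}M_2$ (not primitive) from, say, $M_3*_{\mathbb{C}^2}M_3$ with multiplicities $(1,2)$ (primitive by Corollary \ref{Coro:MatrxMatrxPrimitivityOneBigmu}): the free-product filtration and the word expansions have identical shape in both, and only the finite-dimensional multiplicity arithmetic separates them. Any correct proof must confront that case analysis; an argument of the form ``trivial center plus freeness forces trivial commutant'' cannot succeed as stated.
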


To state the second result, we need a condition that we called LP condition (see Definition \ref{Def:LPcondition}).

\begin{thm}\label{CoroPrimitivityBiggerThan2}
Let $A_1,A_2$ and $D$ be finite dimensional C*-algebras. Assume that:
\begin{enumerate}[(i)]
\item there are faithful traces on $A_1$ and $A_2$ whose restrictions to $D$ agree,
\item all the entries of $\mu_s$ (the matrices of partial  multiplicities of the inclusions $D\hookrightarrow A_s$), $s=1,2$,  are either zero or grater or equal than 2,
\item $A_1*_DA_2$ satisfy the LP condition.
\end{enumerate}
 Then $A_1*_DA_2$ is primitive. 
\end{thm}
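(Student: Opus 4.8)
The plan is to produce, on a separable Hilbert space, a single $*$-representation of $A_1*_DA_2$ that is simultaneously \emph{faithful} and \emph{topologically irreducible}; by definition this is exactly primitivity. I would exploit the universal property of the amalgamated full free product: a representation of $A_1*_DA_2$ is precisely a pair of representations $\pi_1,\pi_2$ of $A_1,A_2$ on a common Hilbert space $H$ whose restrictions to $D$ coincide. Since $A_1,A_2,D$ are finite dimensional, such a pair amounts to a representation $\rho$ of $D$ on $H$ together with extensions of $\rho$ to $A_1$ and to $A_2$, and the matrices $\mu_1,\mu_2$ record exactly which $D$-multiplicity vectors admit such extensions. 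Hypothesis (ii), that every nonzero entry of $\mu_s$ is at least $2$, is what guarantees that a common $\rho$ can be chosen with genuine multiplicity to spare in each isotypic block; that surplus is the freedom driving the whole construction.

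Relying on the fact that, under hypotheses (i)--(iii), $A_1*_DA_2$ admits a separating family of finite-dimensional representations (its residual finite dimensionality, which I expect to extract from the faithful amalgamated trace $\tau_1*_D\tau_2$ furnished by (i) together with the LP condition of Definition \ref{Def:LPcondition}), I would fix a countable separating family $\{\varrho_n\}_{n\ge 1}$, each $\varrho_n$ realized on a space $H_n$ as a compatible pair over some finite-dimensional representation $\rho_n$ of $D$. Setting $H=\bigoplus_n H_n$ and $\rho=\bigoplus_n\rho_n$, I would then study the family of representations $\pi_U$ given by $\pi_1=\bigoplus_n\pi_1^{(n)}$ and $\pi_2=\Ad(U)\bigl(\bigoplus_n\pi_2^{(n)}\bigr)$, where $U$ runs over the unitary group $\mathcal{U}(\rho(D)')$. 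Conjugating $\pi_2$ by $U$ preserves the amalgamation constraint $\pi_1|_D=\pi_2|_D$ precisely because $U$ commutes with $\rho(D)$, so every $U$ yields a bona fide representation of $A_1*_DA_2$.

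The commutant of $\pi_U(A_1*_DA_2)$ equals $\pi_1(A_1)'\cap U\pi_2(A_2)'U^*$, so irreducibility of $\pi_U$ is the assertion that these two von Neumann subalgebras of $B(H)$ meet only in the scalars. This is where hypothesis (ii) enters decisively: because each nonzero entry of $\mu_s$ is at least $2$, every isotypic block of $\rho$ carries multiplicity large enough that $\rho(D)'$ is rich enough to rotate $\pi_2(A_2)'$ into general position against $\pi_1(A_1)'$. Writing the condition $T\in\pi_1(A_1)'\cap U\pi_2(A_2)'U^*$ as a linear system in the matrix entries of $T$ with coefficients depending continuously on $U$, I would argue blockwise that for a suitably large set of $U$ the only solutions are scalar.

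The principal obstacle, and the place where most of the work lies, is to achieve faithfulness and irreducibility for a \emph{single} $U$. Each $\varrho_n$ is finite dimensional, so exact irreducibility is impossible on any one block and the scalar-commutant conclusion can only be forced in the limit over infinitely many blocks; at the same time, detecting a given nonzero element requires that some block's contribution to the norm survive the rotation, which pulls against the mixing needed to kill commuting operators. I would reconcile these by an interleaving (back-and-forth) construction: enumerating the faithfulness tasks (elements to be detected) and the irreducibility tasks (candidate commuting operators to be excluded), and choosing $H$ and the unitary $U$ inductively so that each new stage handles the next task using fresh multiplicity without disturbing the finitely many constraints already met. At every stage the combination of the LP condition and the $\mu_s\ge 2$ hypothesis must be invoked to guarantee enough residual freedom in $\mathcal{U}(\rho(D)')$ to proceed; a Baire category argument on the resulting parameter space then yields the desired $U$ for which $\pi_U$ is at once faithful and irreducible, proving that $A_1*_DA_2$ is primitive.
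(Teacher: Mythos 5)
Your skeleton does match the paper's strategy: reduce to the RFD setting, view representations of $A_1*_DA_2$ as pairs agreeing on $D$, perturb the second leg by unitaries $U\in\mathbb{U}(\pi^{(0)}(D)')$, and run an inductive limit construction producing one representation that is simultaneously faithful and irreducible (this is exactly Lemma \ref{Lemma:CompletionCriteria}, whose hypothesis is the DPI property of suitable finite dimensional completions $\pi\oplus\hat{\pi}$, and you need not redo that limit argument). The genuine gap is at the core technical step: you never establish that the set of general-position unitaries $\Delta(B_1,B_2;B_0)$ is dense in $\mathbb{U}(B_0)$, and your proposed mechanism cannot deliver it as stated. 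Observing that $T\in\pi_1(A_1)'\cap\Ad U(\pi_2(A_2)')$ is a linear system with coefficients continuous in $U$ gives no density statement; the actual content is a dimension count over all intermediate unital abelian subalgebras $C$ of $B_1$ that are conjugate into $B_2$ (Proposition \ref{Prop:DensityHardVersion}), culminating in Theorem \ref{Theorem:dCLesssOrEqualThanN2}. This is where hypothesis (ii) enters quantitatively: nonzero entries of $\mu_s$ being at least $2$ force $\dim(\mathbb{U}(B_1[j]))+\dim(\mathbb{U}(B_2[j]))<\dim(\mathbb{U}(B_0[j]))$ and bound each direct summand by half the block size. Moreover, this count only applies after replacing an arbitrary completing representation by the specific $\hat{\pi}$ built from an integer solution of $\mu_1^tp_1=\mu_2^tp_2$ (Lemma \ref{Lemma:RFDMatricesMultiplicities}), so that the commutants $B_s=(\pi\oplus\hat{\pi})^{(s)}(A_s)'$ have the controlled multiplicities computed in Lemma \ref{Lemma:MultiplicitiesRep}; an arbitrary separating family does not have the right shape, and none of this bookkeeping appears in your sketch, leaving the phrase ``suitably large set of $U$'' unsupported.

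The second missing idea concerns the LP condition, which you invoke only as unexplained ``residual freedom.'' Every $U\in\mathbb{U}(\pi^{(0)}(D)')$ preserves each central summand $B_0[j]$ of $\pi^{(0)}(D)'$, so conjugation can never mix the blocks indexed by the spectrum of $D$. Consequently, even after achieving $B_1[j]\cap\Ad u_j(B_2[j])=\mathbb{C}$ for every $j$ separately, the intersection $B_1\cap\Ad U(B_2)$ may still contain elements acting as distinct scalars on distinct blocks; this is precisely what happens when the Bratteli diagram disconnects, in which case the algebra decomposes as a direct sum by Pedersen's pushout result (Proposition \ref{Prop:Pedersen}) and is not primitive, so any correct proof must use LP at this exact point. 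The paper eliminates these block-diagonal scalars by the domino argument in the proof of Theorem \ref{Thm:MultiplicitiesBiggerThan2}: a direct summand of some $B_s$ that embeds into two distinct blocks $B_0[j_1]$ and $B_0[j_2]$ links the scalars on those blocks once general position is achieved in one of them, and LP supplies a chain of such links covering all of $[l_0]$. Your blockwise linear-system argument does not address this obstruction, which is the one place where the amalgamated case genuinely differs from the unamalgamated situation of ~\cite{Dykema&Torres}. Two minor corrections: residual finite dimensionality follows from hypothesis (i) alone via ~\cite{Dykema&Exel&Etal-EmbeddingsFullFreeProd} (the LP condition plays no role there), and a finite dimensional representation can perfectly well be irreducible --- what fails on a single block is faithfulness, not irreducibility.
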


In fact, there is a more general version (Theorem \ref{Thm:MultiplicitiesBiggerThan2}) but it is a little bit technical
to be  presented at the introduction.

This is the right place for some notes regarding the proof of Theorem \ref{Thm:MainThm}. Firstly,  we give a  criteria that guarantees the primitivity of products of the form $A_1*_DA_2$,  where $A_1*_DA_2$ is RFD and $A_1,A_2$ and $D$ are finite dimensional. Then, for all the cases in which  our criteria does not hold, we show we have a non trivial center.

\subsection{Preliminaries}

Most of the time,  $A_1, A_2$ and $D$ will denote finite
dimensional C*-algebras and by  $\gamma_s:D \to A_i$, $s=1,2$,  we will denote unital inclusions, that is
unital, injective $*$-homomorphisms. Only at the final section, we will specialize to the case when the matrices of partial multiplicities  have rank one.  With respect this inclusions we take, $(A_1*_DA_2,\iota_1,\iota_2)$,  the unital  full free product with amalgamation over $D$, in short denoted by $A_1*_DA_2$. 

For a positive integer $n$, $M_n$ denotes the algebra of $n \times n$ matrices with entries in $\mathbb{C}$.

For $s=0,1,2$ we let $l_s$ denote the dimension of the center of $D$, $A_1$ and $A_2$ respectively. At some later point we will have to perform computations 
using  the dimensions of the direct summands of $A_1$ and $A_2$. Thus, for the rest of the exposition, we fixed an order for the direct summands and, with respect to this order, 
we denote $n_s(i)$, $i=1,\dots, l_s$, $s=1,2$, the dimensions of the direct summand of $A_s$. With this notation we
have that $A_s$ is $*$-isomorphic to $\oplus_{i=1}^{l_s}M_{n_s(i)}$. 

Later on, it will  become clear that the primitivity of $A_1*_DA_2$ only depends on the way how we glue $D$ to $A_1$ and $A_2$. To be more precise, it will depend on the matrix of partial multiplicities of the inclusions $\gamma_s$. Since these matrices will be important they will be denoted as $\mu_s$ and its $(i,j)$-th entry will be denoted as $\mu_s(i,j)$. In general, if $D$ is a unital $*$-subalgebra of a finite dimensional C*-algebra $A$, $\mu(A,D)$ denotes the matrix of partial multiplicities of the inclusion $D \hookrightarrow A$.

Given a $*$-representation $\pi:A_1*_DA_2 \to \mathbb{B}(H)$ we take  $\pi^{(s)}:=\pi \circ \iota_s $, $s=1,2$,
and $\pi^{(0)}=\pi \circ \iota_1 \circ \gamma_1= \pi \circ \iota_2 \circ \gamma_2$. Thus, we  might think $\pi^{(s)}$ and $\pi^{(0)}$ as the restrictions of $\pi$ to $A_s$ and $D$ respectively. 

For a positive integer $n$ we let $[n]$ denote the set $\{1.2,\dots, n-1,n\}$.

The article is divided as follows: section two deals with two
important simplifications, one of them is the criteria that we mentioned above. Section three deals with finite dimensional C*-algebras in general position and finally section four provides a proof of Theorem \ref{Thm:MainThm} and
Theorem \ref{CoroPrimitivityBiggerThan2}.

\section{Two important simplifications}

In this section we present two important simplifications of our main problem i.e. the primitivity of $A_1*_DA_2$. For most of
this section we don't need to assume $A_1$, $A_2$ and $D$ to be finite dimensional. When we need it we state it clearly.

\subsection{Reduction to abelian $D$}

It turns out that it is enough to consider abelian $D$.  This follows from Lemma 2.1  in ~\cite{Brown&Dykema}. Since we will
 use a minor modification we  give a proof of it.

\begin{prop}\label{Prop:Compression}
Let $A$ be a unital C*-algebra and assume there is a projection $p$ and partial isometries $v_1,\dots, v_n$ in
$A$ such that $v_i^*v_i \leq p$ and $\sum_{i=1}^n v_iv_i^*=1-p$. Then:
\begin{enumerate}[(i)]
\item If $pAp$ is primitive so is $A$.
\item If $pAp$ has non trivial center so does $A$. 
\end{enumerate}
\end{prop}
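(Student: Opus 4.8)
The plan is to exploit the partial-isometry data to realize $p$ as a full projection and to transport representations and central elements back and forth between $A$ and the corner $pAp$ by hand, rather than invoking abstract Morita theory. The unifying device is to set $v_0:=p$. Since $\sum_{i=1}^n v_iv_i^*=1-p$ is a projection and each $v_iv_i^*$ is a projection, the summands $v_iv_i^*$ are mutually orthogonal, and each is orthogonal to $v_0v_0^*=p$; hence $\{v_iv_i^*\}_{i=0}^n$ is an orthogonal family with $\sum_{i=0}^n v_iv_i^*=1$. From $v_i^*v_i\le p$ I first record the elementary relations $v_ip=v_i$ (equivalently $pv_i^*=v_i^*$), $v_i^*v_j=\delta_{ij}\,v_i^*v_i$, and consequently $v_i^*av_j\in pAp$ for every $a\in A$, together with the reconstruction formula $a=\sum_{i,j}v_i(v_i^*av_j)v_j^*$. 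This last identity shows that $A$ is generated by the elements $v_i x v_j^*$ with $x\in pAp$, which is what makes both parts reduce to computations in $pAp$.

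For part (i), starting from a faithful irreducible representation $\rho:pAp\to\mathbb{B}(H_0)$, I would define $\pi:A\to\mathbb{B}(H_0^{\,n+1})$ by declaring the $(i,j)$ block of $\pi(a)$ to be $\rho(v_i^*av_j)$, with indices $i,j\in\{0,\dots,n\}$. Multiplicativity follows from $\sum_j v_jv_j^*=1$ (so that $\sum_j\rho(v_i^*av_j)\rho(v_j^*bv_k)=\rho(v_i^*abv_k)$) and the $*$-property is immediate. The map need not be unital because $v_i^*v_i$ need not equal $p$; I would therefore pass to the reducing subspace $K=\pi(1)H=\bigoplus_i K_i$ with $K_i=\rho(v_i^*v_i)H_0$ and $K_0=H_0$, on which $\pi$ is a genuine unital representation. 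Faithfulness is clear: if $\pi(a)=0$ then $\rho(v_i^*av_j)=0$, so $v_i^*av_j=0$ by faithfulness of $\rho$, whence $a=0$ by the reconstruction formula. For irreducibility I would take $T\in\pi(A)'$; since $\pi(p)$ is the projection onto $K_0$ on which $\pi(pAp)$ restricts to $\rho$, the corner $T|_{K_0}$ commutes with the irreducible $\rho(pAp)$ and so equals $\lambda\,\id$. Then, using that $\pi(v_i)$ maps $K_0$ onto $K_i$ and that $T\pi(v_i)=\pi(v_i)T$, I propagate the scalar $\lambda$ to every $K_i$, concluding $T=\lambda\,\id_K$ and hence $A$ primitive.

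For part (ii), given $z\in Z(pAp)$ with $z\notin\mathbb{C}p$, I would set $w=\sum_{i=0}^n v_i z v_i^*$ and verify $w\in Z(A)$ by checking it commutes with each generator $v_jxv_k^*$. Using $v_i^*v_j=\delta_{ij}v_i^*v_i$, both $w\,(v_jxv_k^*)$ and $(v_jxv_k^*)\,w$ collapse to single terms, and the centrality of $z$ in $pAp$ together with $v_j(v_j^*v_j)=v_j$ and $(v_k^*v_k)v_k^*=v_k^*$ reduces each to $v_jzxv_k^*$; hence $w$ is central. Finally $pwp=z$ (all terms but $i=0$ vanish), so if $w$ were a scalar multiple of $1$ then $z$ would lie in $\mathbb{C}p$, contradicting nontriviality of $Z(pAp)$; thus $Z(A)\ne\mathbb{C}1$.

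The step I expect to require the most care is the irreducibility argument in (i): because $\pi$ is not unital, one must honestly identify the reduced space $K=\bigoplus_i K_i$, confirm that $\pi(p)$ is exactly the projection onto $K_0\cong H_0$, and check that $\pi(v_i)$ restricts to a surjection $K_0\to K_i$ so that the scalar can be transported. The remaining computations are formal manipulations with the relations $v_ip=v_i$, $v_i^*v_j=\delta_{ij}v_i^*v_i$, and $\sum_i v_iv_i^*=1$, and should be routine once these relations are in place.
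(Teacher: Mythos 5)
Your proof is correct, and it shares the paper's underlying device --- adjoining $v_0=p$ and encoding $A$ through the matrix of corner elements $v_i^*av_j\in pAp$ --- but part (i) is argued by a genuinely different route. The paper promotes this matrix trick to a $*$-isomorphism $\varphi:A\to QM_{n+1}(pAp)Q$, $Q=\diag\bigl((v_i^*v_i)_{i=0}^n\bigr)$, exhibiting $A$ as a hereditary C*-subalgebra of $M_{n+1}(pAp)$, and then concludes (i) by citing two structural facts: primitivity passes to matrix amplifications, and hereditary subalgebras of primitive C*-algebras are primitive (the Takesaki exercise). You instead induce a concrete representation: from a faithful irreducible $\rho:pAp\to\mathbb{B}(H_0)$ you build $\pi$ on $H_0^{n+1}$ with blocks $\rho(v_i^*av_j)$, cut down to the support space $K=\pi(1)H$, and verify faithfulness (via the reconstruction formula $a=\sum_{i,j}v_i(v_i^*av_j)v_j^*$) and triviality of the commutant by hand, propagating the scalar from $K_0$ to each $K_i$ through $\pi(v_i)$; in effect you reprove the special case of the hereditary-subalgebra fact that is actually needed, and your handling of the non-unitality ($\pi(p)$ is exactly the projection onto $K_0$, $\pi(v_i)|_{K_0}=\rho(v_i^*v_i)$ surjects onto $K_i$, and $\pi(v_i)$ kills the other summands since $pv_k=0$ for $k\geq 1$) is the point where care is required, and you get it right. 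The paper's argument is shorter and leans on standard theory; yours is self-contained and makes the induced irreducible representation explicit. For part (ii) the two proofs essentially coincide: your central element $w=\sum_{i=0}^n v_izv_i^*$ is precisely the preimage under $\varphi$ of the paper's diagonal element $y=\sum_{i=0}^n\varphi(v_ixv_i^*)$, your commutation computation against the generators $v_jxv_k^*$ is the paper's entrywise computation read back in $A$, and your nontriviality argument ($pwp=z$, so $w=\lambda 1$ would force $z=\lambda p$) matches the paper's observation that $v_0xv_0^*=x$.
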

\begin{proof}
For convenience let $v_0=p$. The assumption $v_i^*v_i \leq p$ implies that, for any $a\in A$ and any $i$ and $j$, $v_i^*av_j$ lies in $pAp$. Now define a map $\varphi:A\to M_{n+1}(pAp)$ by $\varphi(a)[i,j]=v_i^*av_j$. Thus $\varphi$ is well
defined, linear, preserves adjoints and the identity $\sum_{i=0}^n v_iv_i^*=p$ implies that $\varphi$ is multiplicative and injective. Even more,
if $Q=\diag( (v_i^*v_i)_{i=0}^n )$ then $\varphi$ is a $*$-isomorphism form $A$ onto $QM_{n}(pAp)Q$. Indeed, this follows
from the fact that $\varphi(v_{i_0}av_{j_0}^*)$ is the matrix with all entries equal to zero except the $(i_0,j_0)$-th entry which
equals $v_{i_0}^*v_{i_0}av_{j_0}^*v_{j_0}$. 

Now we prove (i). It is known that hereditary C*-subalgebras of primitive ones are primitive as well (exercise 4, section 3, chapter III, ~\cite{Takesaki}). Thus we only need to show $M_{n+1}(pAp)$ is primitive. But this is true if we assume $pAp$ to be primitive. 

To prove (ii) we need a little bit more of work. To start, take $x$ a non trivial element in the center of $pAp$. Now
let $y=\sum_{i=0}^n \varphi(v_ixv_i^*)$. Notice that $y$ is a diagonal element in $M_{n+1}(pAp)$. Since $v_0xv_0^*=pxp=x$,
$y$ is not trivial. Next we show that, for any $a$ in $A$, the $(i,j)$-th entry of $\varphi(a)y$ and $y\varphi(a)$ are the same. A
direct computation shows
\begin{eqnarray*}
(\varphi(a)y)[i,j]&=&\sum_{s=0}^n\varphi(a)[i,s]y[s,j]=v_i^*av_jxv_j^*v_j, \\
(y\varphi(a))[i,j]&=&\sum_{s=0}^n y[i,s]\varphi(a)[s,j]=v_i^*v_ixv_i^*av_j .
\end{eqnarray*}

Using that $v_i^*av_j$ and $v_i^*v_i$ lie in $pAp$ 
and $x$ in its center, it follows that
\begin{eqnarray*}
v_i^*av_jxv_j^*v_j&=& xv_i^*av_jv_j^*v_j=xv_iav_j=xv_iv_i^*v_iav_j=  v_i^*v_ixv_i^*av_j
\end{eqnarray*}

and this finishes the proof.
\end{proof}

\begin{coro}\label{Coro:ReductionTrivialCenter}
Let $A_1,A_2$ and $D$ denote C*-algebras and assume $D$ is finite dimensional. For each direct
summand of $D$, choose a minimal projection in that direct summand and let
$p$ denote their sum. Then:
\begin{enumerate}[(i)]
\item If $pA_1p*_{pDp} pA_2p$ is primitive, then $A_1*_DA_2$ is primitive as well.
\item If $pA_1p*_{pDp}pA_2p$ has a non trivial center, then so does $A_1*_DA_2$.
\end{enumerate}
\end{coro}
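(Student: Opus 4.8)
The plan is to apply Proposition \ref{Prop:Compression} to the algebra $A:=A_1*_DA_2$ with the projection $p$; this requires two things, producing the partial isometries demanded by that proposition and identifying the corner $p(A_1*_DA_2)p$ with the smaller free product $pA_1p*_{pDp}pA_2p$. Write $D\cong\bigoplus_{k=1}^{l_0}M_{d(k)}$ and fix matrix units $\{e_{ij}^{(k)}\}$ for each summand, chosen so that the minimal projection selected in the $k$-th summand is $e_{11}^{(k)}$; thus $p=\sum_k e_{11}^{(k)}$ and $pDp=\bigoplus_k\mathbb{C}e_{11}^{(k)}$ is abelian. Throughout I identify $D$ with its image $\iota_1\gamma_1(D)=\iota_2\gamma_2(D)$ in $A$, so the $e_{ij}^{(k)}$ are honest elements of $A$. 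For the partial isometries I take the family $\{e_{i1}^{(k)}:1\le k\le l_0,\ 2\le i\le d(k)\}$: each satisfies $(e_{i1}^{(k)})^*e_{i1}^{(k)}=e_{11}^{(k)}\le p$, while $\sum_{k}\sum_{i=2}^{d(k)}e_{i1}^{(k)}(e_{i1}^{(k)})^*=\sum_k\sum_{i=2}^{d(k)}e_{ii}^{(k)}=1-p$. Hence Proposition \ref{Prop:Compression} applies and gives both implications with $pAp$ in place of $pA_1p*_{pDp}pA_2p$, so it only remains to prove the isomorphism $p(A_1*_DA_2)p\cong pA_1p*_{pDp}pA_2p=:B$.

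The compressed inclusions $pA_sp\to p(A_1*_DA_2)p$, $x\mapsto \iota_s(x)$, agree on $pDp$, so the universal property of $B$ produces a $*$-homomorphism $\Phi:B\to p(A_1*_DA_2)p$. Surjectivity is routine: inserting the resolution $1=\sum_{k,i}e_{i1}^{(k)}(e_{i1}^{(k)})^*$ between the letters of a word $pa_1\cdots a_mp$ and using $e_{1i}^{(k)}=pe_{1i}^{(k)}$ and $e_{i1}^{(k)}=e_{i1}^{(k)}p$ rewrites each such word as a product of elements $pe_{1i}^{(k)}a_te_{j1}^{(l)}p\in pA_sp$, all of which lie in the range of $\Phi$.

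The main obstacle is injectivity, which I would handle by constructing an explicit left inverse through matrix amplification. Let $I=\{(k,i):1\le i\le d(k)\}$, $N=|I|$, let $\{E_{\alpha\beta}\}$ be the standard matrix units of $M_N\subset M_N(B)$, and write $f_k:=j_s(e_{11}^{(k)})$ for the minimal projections of $pDp$ inside $B$ (well defined independently of $s$, since $j_1=j_2$ on $pDp$), noting $\sum_k f_k=1_B$. For $s=1,2$ define $\tilde\rho_s:A_s\to M_N(B)$ by $\tilde\rho_s(a)=\sum_{(k,i),(l,j)}E_{(k,i),(l,j)}\otimes j_s\!\big(e_{1i}^{(k)}a\,e_{j1}^{(l)}\big)$, the inner argument lying in $pA_sp$ because $e_{1i}^{(k)}=pe_{1i}^{(k)}$ and $e_{j1}^{(l)}=e_{j1}^{(l)}p$. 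A direct computation using $\sum_{l,j}e_{jj}^{(l)}=1$ shows each $\tilde\rho_s$ is a $*$-homomorphism, and since $e_{1i}^{(k)}d\,e_{j1}^{(l)}\in pDp$ for $d\in D$, the maps $\tilde\rho_1$ and $\tilde\rho_2$ agree on $D$.

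The universal property of $A_1*_DA_2$ then yields $\rho:A_1*_DA_2\to M_N(B)$ with $\rho\circ\iota_s=\tilde\rho_s$. One checks $\rho(p)=P_0:=\sum_kE_{(k,1),(k,1)}\otimes f_k$, and that the assignment $b\mapsto\sum_{k,l}E_{(k,1),(l,1)}\otimes f_kbf_l$ defines a $*$-isomorphism $\sigma$ from $B$ onto $P_0M_N(B)P_0$. Setting $\Psi:=\sigma^{-1}\circ\rho|_{p(A_1*_DA_2)p}$, a computation on the generators $j_s(pa_sp)$ gives $P_0\tilde\rho_s(a_s)P_0=\sigma(j_s(pa_sp))$, whence $\Psi\circ\Phi=\id_B$; thus $\Phi$ is injective, hence a $*$-isomorphism, and the reduction is complete. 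The delicate points to get right are the multiplicativity of $\tilde\rho_s$ and the verification that $\sigma^{-1}\rho$ inverts $\Phi$ exactly on the generators, the rest being bookkeeping with matrix units.
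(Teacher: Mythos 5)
Your proposal is correct, and its first half coincides exactly with the paper's proof: you compress by the same projection $p$, using precisely the same family of partial isometries $\{e_{i,1}^{(k)}:i\geq 2\}$ coming from matrix units of $D$, and then invoke Proposition \ref{Prop:Compression}. The divergence is in the identification $p(A_1*_DA_2)p\simeq pA_1p*_{pDp}pA_2p$: the paper disposes of it in one line by citing Lemma 2.2 of ~\cite{Brown&Dykema}, whereas you reprove it from scratch, getting $\Phi$ from the universal property of $pA_1p*_{pDp}pA_2p$, surjectivity by inserting the resolution $1=\sum_{k,i}e_{i,1}^{(k)}(e_{i,1}^{(k)})^*$ into alternating words, and injectivity via the amplification $\tilde\rho_s(a)=\sum E_{(k,i),(l,j)}\otimes j_s(e_{1,i}^{(k)}a\,e_{j,1}^{(l)})$. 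I checked the delicate steps you flagged: multiplicativity of $\tilde\rho_s$ does follow from $\sum_{l,j}e_{j,j}^{(l)}=1$, the two maps agree on $D$ because $e_{1,i}^{(k)}d\,e_{j,1}^{(l)}\in pDp$ and $j_1=j_2$ there, $\rho(p)=P_0$, $\sigma$ is an isomorphism onto $P_0M_N(B)P_0$ (using $\sum_kf_k=1_B$), and the generator identity $\tilde\rho_s(pap)=\sigma(j_s(pap))$ forces $\Psi\circ\Phi=\id_B$ on the generating corners and hence everywhere. One point you should make explicit: $\tilde\rho_s$ is \emph{not} unital into $M_N(B)$; its value at $1$ is the proper projection $Q=\sum_{(k,i)}E_{(k,i),(k,i)}\otimes f_k$. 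Since $\tilde\rho_1(1)=\tilde\rho_2(1)=Q$ (both values are determined by the restrictions to $D$, which agree), the fix is to regard the $\tilde\rho_s$ as unital morphisms into the corner $QM_N(B)Q$ before applying the universal property of $A_1*_DA_2$, which is stated for unital maps; with that adjustment the argument is complete. In effect you have supplied a self-contained proof of the Brown--Dykema compression lemma, structurally mirroring the isomorphism $A\simeq QM_{n+1}(pAp)Q$ already used in the proof of Proposition \ref{Prop:Compression}; what the paper buys with the citation is brevity, and what your route buys is that the whole reduction to abelian $D$ is verified within the paper's own toolkit.
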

\begin{proof}

Fix and order for the direct summands of $D$ and for the $k$-th direct summand of $D$ let $\{ e_{i,j}^{(k)}\}_{i,j}$ denote a system of  matrix units. If necessary, we make a change of basis so that $e^{(k)}_{1,1}$ is the minimal projection taken at
the beginning. Then, the partial isometries $\{e_{i,1}^{(k)}\}_{i,k}$ full fill the hypothesis of
Proposition \ref{Prop:Compression}. Lastly, from  Lemma 2.2 in ~\cite{Brown&Dykema} it follows that $p(A_1*_DA_2)p\simeq pA_1p*_{pDp}pA_2p$.

\end{proof}

We need one last lemma to really reduce our problem to the case when $D$ is abelian. 

\begin{lema}\label{Lemma:CompresionSameMatrix}
Assume $A$ and $D$ are finite dimensional and we have an inclusion $\gamma: D \to A$ .
For each direct
summand of $D$, choose a minimal projection in that direct summand and let $p$ denote their sum. Then
\[
\mu(A,D)=\mu(\gamma(p)A\gamma(p),pDp)
\]
\end{lema}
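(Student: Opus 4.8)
The plan is to reduce everything to a rank computation summand by summand. Write $D\cong\bigoplus_{k=1}^{l_0}M_{d_k}$ and $A\cong\bigoplus_{i=1}^{l}M_{n_i}$, identify $D$ with $\gamma(D)\subseteq A$, and let $\rho_i:D\to M_{n_i}$ be the compression of $\gamma$ to the $i$-th summand of $A$. By the very definition of the matrix of partial multiplicities, $\rho_i$ is unitarily equivalent to $\bigoplus_{k}\mu(A,D)(i,k)\,\pi_k$, where $\pi_k:D\to M_{d_k}$ is the projection onto the $k$-th summand; in particular $\sum_k\mu(A,D)(i,k)\,d_k=n_i$. Denote by $q_k$ the chosen minimal projection in the $k$-th summand and $p=\sum_k q_k$, so that $pDp=\bigoplus_k q_kM_{d_k}q_k\cong\mathbb{C}^{\,l_0}$ is abelian, each summand being one-dimensional.

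Next I would pin down the structure of $\gamma(p)A\gamma(p)$. Since $p\in D\subseteq A$, its image in the $i$-th summand is $\rho_i(p)=\sum_k\rho_i(q_k)$, and because $q_k$ is a rank-one projection lying in the $k$-th summand of $D$, the decomposition $\rho_i\cong\bigoplus_k\mu(A,D)(i,k)\pi_k$ gives $\pi_{k'}(q_k)=\delta_{k,k'}q_k$, whence
\[
\rank\rho_i(q_k)=\mu(A,D)(i,k),\qquad r_i:=\rank\rho_i(p)=\sum_k\mu(A,D)(i,k).
\]
Thus $\gamma(p)A\gamma(p)=\bigoplus_i\rho_i(p)M_{n_i}\rho_i(p)\cong\bigoplus_i M_{r_i}$. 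Each $r_i\geq 1$ (since $\sum_k\mu(A,D)(i,k)d_k=n_i\geq 1$), so the compression has exactly $l$ simple summands indexed compatibly with those of $A$, and the two matrices $\mu(A,D)$ and $\mu(\gamma(p)A\gamma(p),pDp)$ have the same shape.

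Finally I would compute $\mu(\gamma(p)A\gamma(p),pDp)$ directly. Because every summand of the abelian algebra $pDp$ is one-dimensional, the partial multiplicity of the $k$-th summand of $pDp$ inside the $i$-th summand $M_{r_i}$ of the compression is simply the rank of the image of $q_k$ in $M_{r_i}$, namely $\rank\rho_i(q_k)$. By the displayed identity this equals $\mu(A,D)(i,k)$, and comparing entries yields $\mu(A,D)=\mu(\gamma(p)A\gamma(p),pDp)$. The only genuinely substantive step is the rank computation $\rank\rho_i(q_k)=\mu(A,D)(i,k)$; everything else is bookkeeping about how compression by minimal projections collapses each matrix summand of $D$ to a scalar while preserving the multiplicity with which it embeds. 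The main obstacle is therefore to justify cleanly that minimality of $q_k$ forces its image under $\rho_i$ to have rank exactly equal to the multiplicity $\mu(A,D)(i,k)$, which I expect to follow immediately from the standard normal form of the inclusion $\rho_i$.
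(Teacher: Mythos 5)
Your proposal is correct and follows essentially the same route as the paper: both arguments identify each entry of the two multiplicity matrices as the rank of the image of the chosen minimal projection $q_k$ (the paper writes it as $e_{1,1}^{(k)}$), and both hinge on the single observation that $q_k\leq p$ forces the compression by $\gamma(p)$ to fix $\rho_i(q_k)$, so the ranks coincide. Your detour through the normal form $\rho_i\cong\bigoplus_k\mu(A,D)(i,k)\,\pi_k$ and the explicit description $\gamma(p)A\gamma(p)\cong\bigoplus_i M_{r_i}$ is just a more verbose derivation of what the paper invokes directly as the definition of partial multiplicity.
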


\begin{proof}
As before, let $\{e_{i,j}^{(k)}\}$, be a system of matrix units for $D$. With no loss of generality we may assume 
$p=\sum_{k}e_{1,1}^{(k)}$. Let $\mu(i,j)$ and $\tilde{\mu}(i,j)$ denote the $(i,j)$-th entry of $\mu(A,D)$ and
$\mu(\gamma(p)A\gamma(p),pDp)$ respectively. By definition $\mu(i,j)$ is the rank of $\pi_i((\gamma(e_{1,1}^{(j)}))$,
and $\tilde{\mu}(i,j)$ is the rank of $\gamma(p)\pi_i(\gamma(e_{1,1}^{(j)}))\gamma(p)$, where $\pi_i$ denotes the  projection from $A$ onto the $i$-th direct summand of $A$.  Since
$$
\gamma(p)\pi_i( \gamma(e_{1,1}^{(j)} )) \gamma(p)=\pi_i(\gamma(p))\pi_i(\gamma(e_{1,1}^{(j)}))\pi_i(\gamma(p)) =\pi_i(\gamma(e_{1,1}^{(j)}))
$$
this finishes the proof.
\end{proof}

From the previous proposition, lemma and corollary is clear that, to prove Theorem \ref{Thm:MainThm}, we can restrict ourselves  to the case where $D$ is abelian.

\subsection{Finite dimensional criteria}

The second simplification give us a finite dimensional criteria that guarantee the primitivity of $A_1*_DA_2$, when
$A_1*_DA_2$ is assumed to be residually finite dimensional. From the works in  ~\cite{Brown&Dykema} and  ~\cite{Dykema&Exel&Etal-EmbeddingsFullFreeProd} , it is known that $A_1*_DA_2$ is
 is RFD if and only if there are faithful sates on $A_1$ and $A_2$ that agree on $D$.  Thus, in this subsection we 
assume there are such states. Now that we have restrict to the residually finite dimensional case,  it is not surprise
to direct our efforts to study finite dimensional $*$-representations and, in particular, we are going to generalize densely
perturbable $*$-representations, introduced in ~\cite{Dykema&Torres}.

\begin{defin}
A unital finite dimensional $*$-representation $\pi:A_1*_DA_2 \to \mathbb{B}(H)$ is
DPI if the set
\[
\Delta(\pi):=\{ u\in \mathbb{U}(\pi^{0}(D)'): \pi^{(1)}(A_1)' \cap
\Ad u (\pi^{(2)}(A_2)')=\mathbb{C}  \}
\]
is dense in $\mathbb{U}(\pi^{(0)}(D)')$. Here, $\pi^{(0)}(D)'$ denotes the commutant of $\pi^{(0)}(D)$ relatively to $\mathbb{B}(H)$.

\end{defin}

Notice that the only difference with the definition of DPI given in  ~\cite{Dykema&Torres} is that we require the unitaries to be taken
from $\mathbb{U}(\pi^{(0)}(D)')$ not only form $\mathbb{U}(\mathbb{B}(H))$.

Since translation by a unitary is a
homeomorphism we easily  get the next remark.

\begin{rmk}\label{Remark:DPIClosedByAd}
If $\pi:A_1*_DA_2 \to \mathbb{B}(H)$ is DPI and $u\in \mathbb{U}(\pi^{(0)}(D)')$, then
 $\pi^{(1)}*(\Ad u \circ \pi^{(2)})$ is DPI as well.
\end{rmk}

The following lemma is the criteria we mentioned at the beginning of the section. It really comes
from the proof of the main theorem in ~\cite{Dykema&Torres}, but with  the right modifications for the amalgamated case.

\begin{lema}\label{Lemma:CompletionCriteria}
Assume $A_1*_DA_2$ is RFD and that for all unital finite dimensional
$*$-representation  $\pi:A_1*_DA_2 \to \mathbb{B}(H)$, there is a unital
finite dimensional $*$-representation $\hat{\pi}:A_1*_DA_2 \to \mathbb{B}(\hat{H})$
such that $\pi \oplus \hat{\pi}$ is DPI. Then $A_1*_DA_2$ is primitive.
\end{lema}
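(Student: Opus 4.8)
The plan is to build a faithful irreducible representation of $A:=A_1*_DA_2$ as a perturbation of an infinite direct sum of finite dimensional representations, using the hypothesis to force irreducibility and using RFD to force faithfulness, and then coupling the two through a Baire category argument. The mechanism is the standard identification of the commutant of a free product representation: if $\rho^{(1)},\rho^{(2)}$ are representations of $A_1,A_2$ that agree on $D$ and $\rho=\rho^{(1)}*\rho^{(2)}$ is the induced representation of $A$, then $\rho(A)'=\rho^{(1)}(A_1)'\cap\rho^{(2)}(A_2)'$, since $\rho(A)''$ is generated by the two subalgebras. Consequently, for $U\in\mathbb{U}(\rho^{(0)}(D)')$ the perturbed representation $\rho_U:=\rho^{(1)}*(\Ad U\circ\rho^{(2)})$ is irreducible precisely when $\rho^{(1)}(A_1)'\cap\Ad U(\rho^{(2)}(A_2)')=\mathbb{C}$, which is exactly the membership condition defining $\Delta(\rho)$. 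Thus DPI is, by design, the statement that irreducibility is achievable along a dense set of perturbing unitaries.

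First I would fix the ambient direct sum. Since $A_1,A_2,D$ are finite dimensional, $A$ is separable; choose a dense sequence $(a_k)_{k\ge 1}$. Using that $A$ is RFD, pick for each $k$ a finite dimensional representation whose norm on $a_k$ is within $1/k$ of $\|a_k\|$, and interleave these with representations supplied by the hypothesis: whenever a finite partial sum $\Pi_N$ has been assembled, append a finite dimensional $\hat\Pi_N$ so that $\Pi_N\oplus\hat\Pi_N$ is DPI. The outcome is a faithful representation $\rho=\bigoplus_n\pi_n$ on $H=\bigoplus_n H_n$ such that (a) for every $k$ there are infinitely many blocks $\pi_n$ computing $\|a_k\|$ to within $1/k$, and (b) a cofinal family of partial sums $\Pi_{N_j}$ is DPI. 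I would then work inside the Polish group $\mathcal{U}:=\mathbb{U}(\rho^{(0)}(D)')$ equipped with the strong operator topology (Polish because $H$ is separable), and seek $U\in\mathcal{U}$ making $\rho_U$ both faithful and irreducible.

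Next I would show that each target property is generic. For faithfulness, continuity of $U\mapsto\rho_U(a)\xi$ makes $U\mapsto\|\rho_U(a_k)\|$ lower semicontinuous, so each $\{U:\|\rho_U(a_k)\|>\|a_k\|-1/m\}$ is open and the faithfulness set $\bigcap_{k,m}\{U:\|\rho_U(a_k)\|>\|a_k\|-1/m\}$ is $G_\delta$; density comes from the redundancy in (a), since a strong-operator neighbourhood constrains only finitely many blocks and one may set $U$ equal to the identity on a far block that already computes $\|a_k\|$. For irreducibility, I would encode triviality of the commutant through a Kadison transitivity condition over a countable dense set of unit vectors, exhibiting the irreducibility set as a $G_\delta$; density here is where the DPI corners enter, transferring the dense supply of good perturbations on each $\Pi_{N_j}$ to the whole of $\mathcal{U}$, with Remark \ref{Remark:DPIClosedByAd} propagating the property under $\Ad$. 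The Baire category theorem then produces a $U$ lying in both dense $G_\delta$ sets, and $\rho_U$ is the desired faithful irreducible representation.

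The hard part will be the coupling of the two requirements, concretely the density of the irreducibility set inside $\mathcal{U}$. DPI asserts density of good perturbations only on each finite dimensional corner, acting on $\mathbb{U}(\Pi_{N_j}^{(0)}(D)')$, whereas I need density on the full $\mathbb{U}(\rho^{(0)}(D)')$ while simultaneously not spoiling the norm-witnessing blocks that guarantee faithfulness. The crux is therefore to promote irreducibility of a perturbed finite corner to irreducibility of the global $\rho_U$ — finite-stage triviality of the commutant does not pass to the limit for free — which I expect to require a quantitative transitivity estimate maintained uniformly as blocks are adjoined, together with an argument that the extension of a corner perturbation to all of $\mathcal{U}$ can be taken arbitrarily small on the finitely many blocks pinned down by faithfulness.
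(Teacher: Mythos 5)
Your framing of the mechanism is correct: the perturbed representation $\rho_U=\rho^{(1)}*(\Ad U\circ\rho^{(2)})$ is well defined precisely for $U\in\mathbb{U}(\rho^{(0)}(D)')$ and is irreducible iff $\rho^{(1)}(A_1)'\cap \Ad U(\rho^{(2)}(A_2)')=\mathbb{C}$, and your interleaving of norming blocks with DPI partial sums matches the paper's setup. But your genericity treatment of faithfulness, while sound, is an overcomplication: in the paper faithfulness comes for free, because each finite dimensional representation is repeated infinitely many times, so every $\sigma_k=\theta_k\oplus\bigoplus_{j\ge k+1}\pi_j\oplus\hat{\pi}_j$ has an isometric tail, whence $\|\sigma(x)\|=\lim_k\|\sigma_k(x)\|=\|x\|$; no Baire intersection with a faithfulness $G_\delta$ is needed.

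The genuine gap is exactly where you flag it, and it is not a deferrable technicality: the SOT-density of $\{U:\rho_U\ \mbox{irreducible}\}$ \emph{is} the content of the lemma, and nothing you have written gets it off the ground. DPI of the corners $\Pi_{N_j}$ gives irreducibility only of finite dimensional compressions, and any $U$ that is block diagonal with respect to $H=\oplus_n H_n$ --- in particular any $U$ obtained by perturbing one corner and extending by the identity, which is all that Remark \ref{Remark:DPIClosedByAd} by itself lets you propagate --- yields a manifestly reducible $\rho_U=\theta\oplus(\mbox{tail})$. So the good unitary can never live in a single corner; it must be a limit of corrections touching all blocks, and finite-stage triviality of the commutant does not pass to such a limit for free. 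The paper's proof is precisely the quantitative scheme you say you ``expect to require'': a recursion producing unitaries $u_k\in\pi_{[k]}^{(0)}(D)'$ with summable norm bounds (\ref{ConvergenceUnitaries}), irreducible corner representations $\theta_k$ as in (\ref{RelationWithPi}), finite $2^{-(k+1)}$-nets $F_k$ of the unit ball (\ref{FiniteNet}), and the compatibility estimate (\ref{RememberingSetsF}) tying $\theta_k$ to $\theta_{k-1}\oplus\pi_k\oplus\hat{\pi}_k$; the telescoped products $\tilde{U}_k$ then converge in norm to a unitary $u\in\pi^{(0)}(D)'$, and it is these nets and estimates that allow one to show the limit $\sigma=\pi^{(1)}*(\Ad u\circ\pi^{(2)})$ has trivial commutant (following Theorem 5.16 of \cite{Dykema&Torres}). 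As it stands, your proposal reduces the lemma to a density statement essentially equivalent to the lemma itself; and once that density is established by the recursion, the Baire machinery is superfluous, since the construction directly outputs a single $U$, as close to $\id_H$ as desired, making $\rho_U$ simultaneously irreducible and automatically faithful. A further omission: you never arrange $\pi(A_1*_DA_2)\cap\mathbb{K}(H)=\{0\}$, which the paper secures by repeating each representation infinitely often and which is needed for the essentiality used in the limit argument borrowed from \cite{Dykema&Torres}.
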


\begin{proof}
We gave an sketch pointing out the main
differences for the amalgamated case.

By assumption, there is a separating family 
$(\pi_j:A_1*_DA_2 \to \mathbb{B}(H_j))_{j\geq 1} $, of finite dimensional unital $*$-representations. 
For later use in constructing an essential representation of $A_1*_DA_2$, i.e., a $*$-representation
with the property that zero is the only
compact operator in its image, we modify $(\pi_j)_{j \geq 1}$, if necessary, so that  that each 
$*$-representation is repeated infinitely many times.
 
By recursion and using our assumption, we find a 
sequence  $(\hat{\pi}_j:A_1*_DA_2 \to \mathbb{B}(\hat{H}_j))_{j\ge1}$
of finite
dimensional unital $*$-representations such that, for all  $k\geq 1$, 
$\oplus_{j=1}^{k}(\pi_j \oplus \hat{\pi}_j)$ is DPI. 
Let $\pi:=\oplus_{j \geq 1}\pi_j\oplus \hat{\pi}_j$ and $H:=\oplus_{j\geq 1}H_j\oplus \hat{H}_j$. 
To ease notation, for $k \geq 1$, let $\pi_{[k]}=\oplus_{j=1}^k\pi_j\oplus \hat{\pi}_j$.
Note that we have $\pi(A_1*_DA_2)\cap \mathbb{K}(H)=\{0\}$. Indeed, if $\pi(x)$ is compact then
$\lim_j \|(\pi_j\oplus\hat{\pi}_j )(x)\|=0$, since each representation is repeated infinitely
many times and we are considering a separating family we get $x=0$.

We will show that given any positive number  $\varepsilon$, there is a unitary $u$ in $\pi^{(0)}(D)'$ such 
that $\|u-\id_H\| < \varepsilon$ and  $\pi^{(1)}*(\Ad u \circ \pi^{(2)})$ is both irreducible and 
faithful. Note: it is crucial that $u$ lies in $\pi^{(0)}(D)'$ otherwise $\pi^{(1)}*(\Ad u \circ \pi^{(2)})$ is
not well defined. This is the main difference and the technical aspect that we have to be very careful. Fortunately, the definition of a DPI
representation takes care of this detail. Now proceed as in the proof of Theorem 5.16 in  ~\cite{Dykema&Torres}.

We will construct a sequence $(u_k,\theta_k,F_k)_{k\geq 1}$ where:
\begin{enumerate}[(i)]
\item For all $k$, $u_k$  is a unitary in $\pi_{[k]}^{(0)}(D)'$ satisfying

\begin{eqnarray}\label{ConvergenceUnitaries}
\| u_k - \id_{\oplus_{j=1}^kH_j\oplus \hat{H}_j}\| < \frac{\varepsilon}{2^{k+1}}.
\end{eqnarray}

\item Letting 
\[
u_{(j,k)}=u_j \oplus \id_{H_{j+1} \oplus \hat{H}_{j+1}} \oplus \cdots  \oplus \id_{H_k\oplus \hat{H}_k}  
\]
and
\begin{equation}\label{Eqn:ExtendingU}
U_k=u_ku_{(k-1,k)} u_{(k-2,k)}\cdots u_{(1,k)}\,,
\end{equation}
$U_k$ lies in $\pi_{[k]}^{(0)}(D)'$  and the unital $*$-representation of $A_1*_DA_2 $ onto 
$\mathbb{B}\bigl( \oplus_{j=1}^k H_j\oplus \hat{H}_j \bigr)$, given by
\begin{eqnarray}\label{RelationWithPi}
\theta_k=  \pi_{[k]}^{(1)} * (\Ad U_k \circ \pi_{[k]}^{(2)} ),
\end{eqnarray}
is irreducible.

\item $F_k $ is a finite subset of the closed unit ball of $A_1*_DA_2$   and for all $y$ in the closed 
unit  ball of  $A_1* A_2$ there is an element $x$ in $F_k$ such that
\begin{eqnarray}\label{FiniteNet}
 \| \theta_k(x)-\theta_k(y) \| < \frac{1}{2^{k+1}}\,.
\end{eqnarray}

\item If $k\geq 2$, then for any element  $x$ in the union $\cup_{j=1}^{k-1} F_j$,  we have
\begin{eqnarray}\label{RememberingSetsF}
\|  \theta_{k}(x)-(\theta_{k-1} \oplus \pi_{k}\oplus \hat{\pi}_{k})(x) \|  < \frac{1}{2^{k+1}}\,.
\end{eqnarray}

\end{enumerate}
We construct such a sequence by recursion.

\noindent
{\em Step 1: Construction of $(u_1,\theta_1,F_1)$.}
Since $\pi_{[1]}=\pi\oplus \hat{\pi}$ is DPI, there is a unitary $u_1$ in $\pi_{[1]}^{(0)}(D)'$ 
such that  $\|u_1-\id_{H\oplus \hat{H}}\|< \frac{\varepsilon}{2^2}$ and 
$\pi_{[1]}^{(1)}*\Ad u_1 \circ \pi_{[1]}^{(2)}$ is well defined and
irreducible. Hence condition (\ref{ConvergenceUnitaries})  and (\ref{RelationWithPi})  trivially  hold.
Since $H_1 \oplus \hat{H}_1$ is finite dimensional,  
there is a finite set $F_1$ contained in the closed unit  ball of $A_1*_DA_2$ satisfying condition 
(\ref{FiniteNet}).  
At this stage there is no condition (\ref{RememberingSetsF}).

\noindent
{\em Step 2: Construction of $(u_{k+1},\theta_{k+1},F_{k+1})$ from $(u_j,\theta_j,F_j)$, $1\leq  j \leq k$.}
First, we are to prove  that there exist a unitary $u_{k+1}$ in $\pi_{[k+1]}^{(0)}(D)'$ 
such that $\| u_{k+1} - \id_{\oplus_{j=1}^{k+1}H_j \oplus \hat{H}_j} \| < \frac{\varepsilon}{2^{k+2}}$,
the unital  $*$-representation of $A_1*_DA_2$ into  
$\mathbb{B}\bigl( \oplus_{j=1}^{k+1} H_j \oplus \hat{H}_j \bigr)$ defined by 
\begin{equation}\label{DefinitioRepTheta}
\theta_{k+1}:= (\theta_k \oplus \pi_{k+1}\oplus \hat{\pi}_{k+1})^{(1)}*(\Ad u_{k+1} \circ 
(\theta_k \oplus \pi_{k+1}\oplus \hat{\pi}_{k+1} )^{(2)})  
\end{equation}
 is well defined, irreducible and for any element $x$ in the union $\cup_{j=1}^k F_j$, the inequality  
$\|\theta_{k+1}(x)- (\theta_k \oplus \pi_{k+1} \oplus \hat{\pi}_{k+1})(x)   \|< \frac{1}{2^{k+1}}$, holds. 
We begin by noticing that  $U_k\oplus \id_{H_{k+1}\oplus \hat{H}_{k+1}}$ lies in
$\pi_{[k+1]}^{(0)}(D)'$. This, along with  (\ref{RelationWithPi}) gives
\[
\theta_k \oplus \pi_{k+1} \oplus \hat{\pi}_{k+1}=\pi_{[k+1]}^{(1)}*\Ad (U_k\oplus \id_{H_{k+1}\oplus \hat{H}_{k+1}}
) \circ \pi_{[k+1]}^{(2)},
\]
thus Remark  \ref{Remark:DPIClosedByAd}
assures the existence of such unitary  $u_{k+1}$.
Notice that, from construction,  conditions (\ref{ConvergenceUnitaries}) and  
(\ref{RememberingSetsF}) are satisfied. Now, it is easy to see that $u_{(j,k+1)}$ is in 
$\pi^{(0)}_{[k+1]}(D)'$ for all $j=1,\dots, k$, so we get that $U_{k+1}$ also lies in 
$\pi^{(0)}_{[k+1]}(D)'$. A consequence of (\ref{RelationWithPi}) and 
(\ref{Eqn:ExtendingU})  is
\[
\theta_{k+1}=\pi_{[k+1]}^{(1)}*(\Ad U_{k+1} \circ \pi_{[k+1]}^{(2)}).
\]
Finite dimensionality of   $\oplus_{j=1}^{k+1}H_j\oplus\hat{H}_j $  guarantees the existence of a
finite set $F_{k+1}$ contained in the closed unit ball of $A_1*_DA_2$ satisfying condition  (\ref{FiniteNet}).
This completes Step 2.

Now consider the $*$-representations  
\begin{equation}\label{DefRepSigmak}
\sigma_k=\theta_k \oplus \bigoplus_{j\geq k+1} \pi_j \oplus \hat{\pi}_j.
\end{equation}
We now show there is a unital $*$-representation of  $\sigma:A_1*_DA_2\to\mathbb{B}(H)$, 
such that for  all $x$ in $A_1*_DA_2$, $\lim_k \|  \sigma_k(x) -\sigma(x)\|=0$.
If we extend the unitaries $u_k$ to all of $H$ via 
$\tilde{u}_k=u_k  \oplus_{j\geq k+1} \id_{H_j \oplus \hat{H}_j}$, 
then we  obtain, firstly that $\tilde{u}_k\in \pi^{(0)}(D)'$ and secondly 
\begin{equation}
\sigma_k =  \pi^{(1)} *(\Ad \tilde{U}_k  \circ \pi^{(2)}) ,
\end{equation} 
where $\tilde{U}_k=\tilde{u}_k\cdots \tilde{u}_1$.
Thanks to condition (\ref{ConvergenceUnitaries}), we have
$$
\| \tilde{U}_k -\id_H \| \leq \sum_{j=1}^k\|  \tilde{u}_k - \id_H \| < \sum_{j=1}^k \frac{\varepsilon}{2^{k+1}},
$$
and for $l \geq 1$
$$
\| \tilde{U}_{k+l} - \tilde{U}_{k} \|  =\|  \tilde{u}_{k+l}\cdots \tilde{u}_{k+1}  - \id_H  \| 
\leq  \sum_{j=k+1}^{k+l} \frac{\varepsilon}{2^{j+1}}.
$$
Hence, Cauchy's criterion implies there is a unitary $u$ in $\mathbb{U}(H)$ such that the 
sequence $(\tilde{U}_k)_{k\geq 1}$ converges in norm to $u$ and $\|u-\id_H\|< \frac{\varepsilon}{2}$.
Since each $\tilde{U}_k$ commutes with the elements of $\pi^{(0)}(D)$, $u$ also commutes with
all the elements of $\pi^{(0)}(D)$. Hence the $*$-representation  
\begin{equation}\label{DefRepSigma}
\sigma=\pi^{(1)}*(\Ad u \circ \pi^{(2)})
\end{equation}
is well defined.
An standard approximation argument shows that for all $x$ in $A_1*_DA_2$,
\begin{equation}\label{PointWiseConvergenceSigma}
\lim_k \| \sigma_k(x)- \sigma(x)\| =0.
\end{equation}

Our next goal is to  show $\sigma$ is irreducible but  from this point the proof 
is identical to the proof of Theorem 5.16 in  ~\cite{Dykema&Torres}.

\end{proof}

Thus, due to the last lemma, we now focus on finding finite dimensional $*$-representations that
are irreducible.

\section{Finite dimensional C*-subalgebras in general position}

The technique that we will use, in the sense of Lemma \ref{Lemma:CompletionCriteria}, to complete a finite dimensional
$*$-representation is the one  used in ~\cite{Dykema&Torres} i.e., perturbations. For the convenience of the reader we
recall some  notation and definitions.

\begin{defin}
Let $\pi:A_1*_DA_2 \to M_N$ be a unital finite dimensional $*$-representation. For a unitary  $u $ in $\pi^{(0)}(D)'$, a perturbation of $\pi$ by $u$ is the $*$-representation given by $\pi^{(1)}*( \Ad u \circ \pi^{(2)})$. 
\end{defin}  

Notice that it is crucial that $u$ lies in the commutant of $\pi^{(0)}(D)$, otherwise the $*$-representations 
$\pi^{(1)}$ and $\Ad u \circ \pi^{(2)}$ might not agree on $D$ and $\pi^{(1)}*( \Ad u \circ \pi^{(2)})$
might not be well defined. Also notice that the irreducibility of $\pi^{(1)}*(\Ad u \circ \pi^{(2)})$ is equivalent to
\[
\pi^{(1)}(A_1)' \cap \Ad u(\pi^{(2)}(A_2)') =\mathbb{C}
\]
which, loosely speaking, is telling us that $\pi^{(1)}(A_1)'$ and $\Ad u(\pi^{(2)}(A_2)')$ are in general position.

 We  can frame the latter in the next context:  assume we have $M_N$, a simple finite dimensional C*-algebra, and let  $B_0, B_1$ and $B_2$ be  finite dimensional C*-subalgebras of $M_N$ such that $B_1$ and $B_2$ are contained in $B_0$. We are interested in finding conditions, on $B_1$ and $B_2$, such that the set 
\[
\Delta(B_1,B_2;B_0)=\{u\in \mathbb{U}(B_0): B_1\cap \Ad u(B_2)=\mathbb{C}\}
\]
is dense in $\mathbb{U}(B_0)$.

From section 4 in ~\cite{Dykema&Torres} it follows  that the set $\Delta(B_1,B_2;B_0)$ is dense if we can control de following numbers:

\begin{eqnarray*}
d(C,B_1,B_2,B_0,u):&=&\dim \mathbb{U}(B_1)-\mathbb{U}(B_1\cap C') \\
&+&  \dim \mathbb{U}(B_2)-\dim \mathbb{U}(\Ad u(B_2)\cap C') \\ 
&+& \dim \mathbb{U}(C'\cap B_0) 
\end{eqnarray*}
where all commutants are taken relative to $M_N$, $C$ is a unital, abelian proper $C^*$-subalgebra of $B_1$, with $\dim(C)\geq 2$ and $u$ is a unitary in $\mathbb{U}(B_0)$  such that
 $C$ is contained in $\Ad u(B_2)$.

Indeed, from Lemma 4.15 and Propositions 4.20 and  4.21  in ~\cite{Dykema&Torres} we  have the next  proposition.

\begin{prop}\label{Prop:DensityHardVersion}
With the same notation as above, if 
\[
\dim(\mathbb{U}(B_1))+\dim(\mathbb{U}(B_2)) \leq \dim(\mathbb{U}(B_0))
\]
and
$$
d(C,B_1,B_2,B_0,u)< \dim \mathbb{U}(B_0)
$$
for all $C$, unital abelian  proper C*-subalgebra of $B_1$, with $\dim(C)\geq 2$ and   unitary $u$ in  $\mathbb{U}(B_0)$  such that  $C$ is contained in $\Ad u(B_2)$, then $\Delta(B_1,B_2;B_0)$ is dense in $\mathbb{U}(B_0)$.
\end{prop}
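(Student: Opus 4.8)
The plan is to show that the complement of $\Delta(B_1,B_2;B_0)$ in $\mathbb{U}(B_0)$ is nowhere dense, by exhibiting it as a finite union of real-algebraic subsets each of dimension strictly smaller than $\dim\mathbb{U}(B_0)$. Since $\mathbb{U}(B_0)=\prod_i U(n_0(i))$ is a compact connected smooth manifold, any such finite union has empty interior, so its complement is dense. In this way the statement becomes a bookkeeping argument that assembles the dimension estimates already recorded in Lemma 4.15 and Propositions 4.20 and 4.21 of \cite{Dykema&Torres}.

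First I would describe the \emph{bad set}. A unitary $u$ fails to lie in $\Delta(B_1,B_2;B_0)$ precisely when $B_1\cap \Ad u(B_2)\neq \mathbb{C}$. Whenever this occurs the intersection is a finite dimensional C*-algebra of dimension at least two, hence contains a unital abelian C*-subalgebra $C$ with $\dim(C)\geq 2$; one may take $C$ generated by a single nontrivial self-adjoint element of the intersection. Consequently the bad set is covered by the sets $\{u\in\mathbb{U}(B_0):C\subseteq B_1\ \text{and}\ C\subseteq \Ad u(B_2)\}$ as $C$ ranges over the unital abelian subalgebras of $B_1$ with $\dim(C)\geq 2$. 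Because $B_1$ has only finitely many unitary conjugacy classes of such subalgebras, the class being fixed by the discrete multiplicity data of the inclusion $C\hookrightarrow B_1$, it suffices to control, for one representative $C$ of each class, the dimension of the corresponding piece $\mathbb{U}(B_1)\cdot\{w\in\mathbb{U}(B_0):C\subseteq \Ad w(B_2)\}$.

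Next I would invoke Lemma 4.15 of \cite{Dykema&Torres} to bound the dimension of each piece. Fixing $C\subseteq B_1$ and a unitary $u$ with $C\subseteq \Ad u(B_2)$, the incidence set decomposes into three contributions: the freedom to reposition $C$ inside $B_1$, which is the dimension $\dim\mathbb{U}(B_1)-\dim\mathbb{U}(B_1\cap C')$ of the $\mathbb{U}(B_1)$-orbit of $C$; the freedom to reposition $C$ inside $\Ad u(B_2)$, which is $\dim\mathbb{U}(B_2)-\dim\mathbb{U}(\Ad u(B_2)\cap C')$; and the fiber of unitaries of $B_0$ that commute with $C$ and hence leave the incidence unchanged, of dimension $\dim\mathbb{U}(C'\cap B_0)$. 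Their sum is exactly $d(C,B_1,B_2,B_0,u)$, so each stratum of the bad set has dimension at most $d(C,B_1,B_2,B_0,u)$.

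Finally I would assemble the conclusion through Propositions 4.20 and 4.21 of \cite{Dykema&Torres}. The baseline inequality $\dim\mathbb{U}(B_1)+\dim\mathbb{U}(B_2)\leq \dim\mathbb{U}(B_0)$ is the nondegeneracy hypothesis guaranteeing that the generic, trivial-intersection configuration is not excluded on dimensional grounds, while the hypothesis $d(C,B_1,B_2,B_0,u)<\dim\mathbb{U}(B_0)$ forces every stratum of the bad set to have strictly positive codimension. A finite union of such strata is nowhere dense, whence $\Delta(B_1,B_2;B_0)$ is dense in $\mathbb{U}(B_0)$. I expect the main obstacle to be the dimension estimate underlying Lemma 4.15: one must correctly isolate the stabilizer directions $\mathbb{U}(C'\cap B_0)$ that move $u$ without altering the incidence, avoid double-counting them against the two orbit contributions in the product map $\mathbb{U}(B_1)\times\{w:C\subseteq\Ad w(B_2)\}\to\mathbb{U}(B_0)$, and check that the estimate is uniform as $u$ varies within a fixed conjugacy type, so that the bound $d$ holds over each continuous stratum rather than merely pointwise.
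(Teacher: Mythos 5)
Your strategy --- stratify the complement of $\Delta(B_1,B_2;B_0)$ by the finitely many multiplicity classes of unital abelian subalgebras $C$, realize each stratum inside a set of the form $\mathbb{U}(B_1)\,\mathbb{U}(C'\cap B_0)\,u_0\,\mathbb{U}(B_2)$ whose dimension is controlled by the three contributions you list (which indeed sum exactly to $d(C,B_1,B_2,B_0,u)$, after quotienting the stabilizer directions you correctly flag), and conclude density from a finite union of closed sets of strictly smaller dimension --- is precisely the mechanism of Lemma 4.15 and Propositions 4.20 and 4.21 of Dykema--Torres, which is all the paper itself offers as proof of this proposition. So you have reconstructed the intended argument, not found a different one; note only that the cited results are stated for $B_0=M_N$, while here $B_0$ is a direct sum, so the double-coset bookkeeping must be carried out summand by summand of $B_0$.

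There is, however, one genuine hole. Your covering of the bad set lets $C$ range over \emph{all} unital abelian subalgebras of $B_1$ with $\dim(C)\geq 2$, but the hypothesis bounds $d(C,B_1,B_2,B_0,u)$ only for \emph{proper} $C$. If $B_1$ is non-abelian, or abelian of dimension at least $3$, a nontrivial intersection $B_1\cap \Ad u(B_2)$ always contains a proper $C$ of dimension $2$ (span a nontrivial spectral projection against its complement), so properness costs nothing. But if $B_1\simeq \mathbb{C}^2$ --- a case that genuinely occurs in this paper, where the $B_s$ are commutants of the form $M_{p_s}^{(l_s)}$ and can be abelian --- then $B_1\cap \Ad u(B_2)$ is either $\mathbb{C}$ or all of $B_1$, the only available $C$ is $B_1$ itself, the second hypothesis is vacuous, and the whole bad set is $\{u\in\mathbb{U}(B_0): B_1\subseteq \Ad u(B_2)\}$. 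Your proof never bounds this stratum, and the telltale symptom is that the first hypothesis $\dim(\mathbb{U}(B_1))+\dim(\mathbb{U}(B_2))\leq \dim(\mathbb{U}(B_0))$ never actually enters your argument: calling it a ``nondegeneracy hypothesis guaranteeing the generic configuration is not excluded'' is not a step of the proof. What is needed is the same double-coset estimate applied with $C=B_1$, giving dimension at most $\dim\mathbb{U}(B_1'\cap B_0)+\dim\mathbb{U}(B_2)-\dim\mathbb{U}(\Ad u(B_2)\cap B_1')$, followed by a verification that this is strictly below $\dim\mathbb{U}(B_0)$; working this out in terms of the ranks of the minimal projection of $B_1$ inside the summands of $\Ad u(B_2)$, the inequality fails only in degenerate configurations (essentially $B_2$ exhausting the relevant part of $B_0$), and it is exactly the first hypothesis that excludes these. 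Until you supply that case, your proof is complete only when every admissible $C$ can be taken proper.
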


To be honest,  the second assumption in Proposition \ref{Prop:DensityHardVersion} is quite demanding. Fortunately,
when $B_0$ is simple we can simplify it and this is done in the next section.

\subsection{A simple assumption} 
We specialize in the case when $B_0=M_N$. This case, for very especial instances of $B_1$ and
$B_2$, were treated in ~\cite{Dykema&Torres}.  The main purpose of this subsection is generalize Theorem 4.1 in ~\cite{Dykema&Torres} as follows.

\begin{thm}\label{Theorem:dCLesssOrEqualThanN2}
 Assume 
\[
\dim(\mathbb{U}(B_1))+\dim(\mathbb{U}(B_2)) <N^2
\]

and that the dimensions of the direct summands of $B_1$ and $B_2$ are less or
equal than $N^2/2$. Then $\Delta(B_1,B_2;M_N)$ is dense in $M_N$.

\end{thm}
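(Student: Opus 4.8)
The plan is to verify the two hypotheses of Proposition \ref{Prop:DensityHardVersion} with $B_0=M_N$, since that proposition then immediately gives density of $\Delta(B_1,B_2;M_N)$ in $\mathbb{U}(M_N)$. Throughout I use that for a finite dimensional C*-algebra $B\cong\bigoplus_j M_{a_j}$ one has $\dim(\mathbb{U}(B))=\sum_j a_j^2=\dim_{\mathbb{C}}(B)$, and in particular $\dim(\mathbb{U}(M_N))=N^2$. The first hypothesis, $\dim(\mathbb{U}(B_1))+\dim(\mathbb{U}(B_2))\le N^2$, is contained in the standing assumption $\dim(\mathbb{U}(B_1))+\dim(\mathbb{U}(B_2))<N^2$. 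All the work therefore goes into the second hypothesis: for every unital abelian proper C*-subalgebra $C\subseteq B_1$ with $\dim(C)\ge 2$ and every $u\in\mathbb{U}(M_N)$ with $C\subseteq\Ad u(B_2)$, one must prove $d(C,B_1,B_2,M_N,u)<N^2$.

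Fix such $C$ and $u$, let $p_1,\dots,p_m$ ($m\ge 2$) be the minimal projections of $C$, and set $r_i=\rank(p_i)$, so $\sum_i r_i=N$. Since $B_0=M_N$ we have $C'\cap B_0=C'\cong\bigoplus_i M_{r_i}$, hence $\dim(\mathbb{U}(C'\cap B_0))=\sum_i r_i^2$. As $C\subseteq B_1$ and $C\subseteq\Ad u(B_2)$, the algebras $B_1\cap C'$ and $\Ad u(B_2)\cap C'$ are the block--diagonal compressions $\bigoplus_i p_iB_1p_i$ and $\bigoplus_i p_i\,\Ad u(B_2)\,p_i$. Counting dimensions block by block yields, for $s=1,2$,
\[
\dim(\mathbb{U}(B_s))-\dim(\mathbb{U}(B_s\cap C'))=\sum_{i\ne j}\dim_{\mathbb{C}}(p_iB_sp_j),
\]
where $B_2$ is read as $\Ad u(B_2)$. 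Using the identity $N^2=(\sum_i r_i)^2=\sum_i r_i^2+\sum_{i\ne j}r_ir_j$, the target $d<N^2$ becomes equivalent to
\[
\sum_{i\ne j}\Big(\dim_{\mathbb{C}}(p_iB_1p_j)+\dim_{\mathbb{C}}(p_i\,\Ad u(B_2)\,p_j)\Big)<\sum_{i\ne j}r_ir_j,
\]
i.e.\ the off--diagonal parts of $B_1$ and of $\Ad u(B_2)$, relative to the block decomposition induced by $C$, must together occupy strictly less than the full off--diagonal space $\bigoplus_{i\ne j}p_iM_Np_j$.

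To estimate the left side I decompose along the minimal central projections of $B_1$: writing $B_1\cong\bigoplus_\alpha M_{a_\alpha}$, where the summand $M_{a_\alpha}$ appears in $M_N$ with multiplicity $c_\alpha$, and letting $s_i^\alpha$ be the rank of $p_i$ inside the $\alpha$-th summand, one has $r_i=\sum_\alpha c_\alpha s_i^\alpha$, $\dim_{\mathbb{C}}(p_iB_1p_j)=\sum_\alpha s_i^\alpha s_j^\alpha$, and the exact expansion
\[
\sum_{i\ne j}r_ir_j=\sum_\alpha c_\alpha^2\sum_{i\ne j}s_i^\alpha s_j^\alpha+\sum_{\alpha\ne\gamma}c_\alpha c_\gamma\Big(a_\alpha a_\gamma-\sum_i s_i^\alpha s_i^\gamma\Big),
\]
in which every term is nonnegative (Cauchy--Schwarz for the cross terms). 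Since $\sum_{i\ne j}\dim_{\mathbb{C}}(p_iB_1p_j)=\sum_\alpha\sum_{i\ne j}s_i^\alpha s_j^\alpha$, each summand of $B_1$ of multiplicity $c_\alpha\ge 2$ contributes its diagonal defect $\sum_{i\ne j}s_i^\alpha s_j^\alpha$ to the right side multiplied by $c_\alpha^2\ge 4$, producing a large factor of slack that leaves room to absorb the analogous contribution of $\Ad u(B_2)$.

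The main obstacle is exactly the summands of multiplicity $c_\alpha=1$, for which the diagonal comparison produces no slack; this is where the hypothesis that every direct summand has dimension $\le N^2/2$ is used. For such a summand $a_\alpha\le N/\sqrt{2}$, so it occupies only $a_\alpha$ of the $N$ dimensions and leaves $N-a_\alpha\ge(1-1/\sqrt{2})N>0$ for the other summands; the compensating slack must then be extracted from the cross terms $c_\alpha c_\gamma\big(a_\alpha a_\gamma-\sum_i s_i^\alpha s_i^\gamma\big)$ with $\gamma\ne\alpha$. The heart of the proof is thus a careful bookkeeping argument showing that, under the constraints $\sum_\alpha c_\alpha a_\alpha=N$, $a_\alpha\le N/\sqrt2$, and $\sum_\alpha a_\alpha^2=\dim(\mathbb{U}(B_1))$, the cross terms dominate the defect produced by the multiplicity-one summands; applying the same estimate symmetrically to $\Ad u(B_2)$ and invoking the global budget $\dim(\mathbb{U}(B_1))+\dim(\mathbb{U}(B_2))<N^2$ then closes the inequality. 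I expect this constrained combinatorial optimization—the generalization of the corresponding count in Theorem 4.1 of \cite{Dykema&Torres}—to be the technical core of the argument.
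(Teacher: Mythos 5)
Your setup is sound and, up to the point where it stops, it agrees with what the paper needs: you correctly reduce the theorem to the second hypothesis of Proposition \ref{Prop:DensityHardVersion}, correctly rewrite $d(C,u)<N^2$ as the off-diagonal inequality $\sum_{i\ne j}\bigl(\dim(p_iB_1p_j)+\dim(p_i\,\Ad u(B_2)\,p_j)\bigr)<\sum_{i\ne j}r_ir_j$, and your exact expansion of $\sum_{i\ne j}r_ir_j$ along the central decomposition of $B_1$ is verifiably correct. But the proposal ends by declaring that a ``careful bookkeeping argument'' or ``constrained combinatorial optimization'' will close the inequality, without carrying it out. That optimization \emph{is} the theorem: in the paper it occupies all of Lemma \ref{LastUpperBound} and Proposition \ref{Prop:IncreasingProperty}. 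What you have is a correct reformulation plus a statement of the remaining problem, not a proof.

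Two concrete reasons the missing step is not routine. First, your single expansion of $\sum_{i\ne j}r_ir_j$ must simultaneously dominate the diagonal defects of \emph{both} $B_1$ and $\Ad u(B_2)$, and the ``slack from $c_\alpha^2\ge4$'' heuristic says nothing in the hardest case, when the multiplicities of $B_1$ are all $1$: there the diagonal part of your expansion exactly cancels the $B_1$-defect, and the entire $B_2$-defect must be absorbed by the cross terms $c_\alpha c_\gamma\bigl(a_\alpha a_\gamma-\sum_i s_i^\alpha s_i^\gamma\bigr)$, which can be small; no estimate for this is given. The paper does not attack general $C$ head on: Proposition \ref{dCAverageOverSubalgebras} computes how $d$ changes when two minimal projections of $C$ are merged, Proposition \ref{Prop:IncreasingProperty} shows via a four-case analysis (the partitions $\pi_A,\pi_B$ of $[l]$ and the extreme points of the simplex $P$) that some merge never decreases $d$, and induction reduces to $\dim(C)=2$, where Lemma \ref{LastUpperBound} closes the estimate by completing squares against the budget $\dim(\mathbb{U}(B_1))+\dim(\mathbb{U}(B_2))<N^2$. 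Second, you read the summand-size hypothesis as $a_\alpha\le N/\sqrt2$, whereas the paper's proof explicitly translates it to $\max_i p_1(i),\max_i p_2(i)\le N/2$ and genuinely uses that stronger bound (e.g.\ the contradiction ``$p_1(i_0)=\sum_j a_{i_0,j}>N/2$'' is what forces $\pi_A$ and $\pi_B$ to have at least two blocks); under $a_\alpha\le N/\sqrt2$ alone that step fails, so your sketched bookkeeping would have to succeed under strictly weaker constraints than the published case analysis, and nothing in the proposal indicates how.
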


The proof is elaborate, so to ease the burden we start with some notations. Since $B_1$ and $B_2$ will be fixed
for the rest of this section we  rewrite  $d(C,B_1,B_2,B_0,u)$ as
\begin{eqnarray*}
d(C,u):&=&\dim \mathbb{U}(B_1)-\dim \mathbb{U}(B_1\cap C')  \\
&+& \dim \mathbb{U}(B_2)-\dim\mathbb{U}(\Ad u(B_2)\cap C ')
\\
&+&\dim \mathbb{U}(C') 
\end{eqnarray*}

\begin{notat}\label{NotationAbelianC}
Given  $C$, a unital C*-subalgebra of $B_1$
and a unitary $u$ in $M_N$ such that $\Ad u(C)$ is contained in $B_2$, we denote 
\begin{eqnarray*}
\mu(B_1,C)&=&[a_{i,j}]_{1\leq i \leq l_1, 1 \leq j \leq l},\\
\mu(B_2,\Ad u(C))&=&[b_{i,j}]_{1\leq i \leq l_2, 1\leq j \leq l},\\
\mu(M_N,C)&=&[m(1),\dots , m(l)],\\
\mu(M_N,B_1)&=&[m_1(1),\dots , m_1(l_1)],\\
\mu(M_N,B_2)&=&[m_2(1),\dots , m_2(l_2)].
\end{eqnarray*}
where  we are taken the matrices of partial multiplicities given by the inclusions. 
\end{notat}

There are lots of algebraic relations between the entries of these matrices that we want to point out. Let $(p_1(1),\dots, p_1(l_1))$, $(p_2(1),\dots, p_2(l_2))$  denote the dimensions of the direct summands of
 $B_1$ and $B_2$ respectively. Thus, we have
\begin{eqnarray*}
\textrm{for} \quad  1 \leq i \leq l_1 : \sum_{j=1}^l a_{i,j}&=& p_1(i),\\
\textrm{for} \quad 1 \leq i \leq l_2:  \sum_{j=1}^l b_{i,j}&=& p_2(i),\\
\sum_{j=1}^l m(j)&=& N,\\
\sum_{j=1}^{l}\sum_{i=1}^{l_1}m_1(j)a_{i,j}&=&N,\\
\sum_{j=1}^{l}\sum_{i=1}^{l_2} m_1(j)b_{i,j}&=&N.
\end{eqnarray*}

Since $\mu(M_N,B_1)\mu(B_1,C)=\mu(M_N,C)=\mu(M_N,B_2)\mu(\Ad u(B_2),C )$ we also must have
\begin{eqnarray*}
\textrm{for} \quad 1 \leq j \leq l : \sum_{i=1}^{l_1} m_1(i)a_{i,j}&=&m(j),\\ 
\textrm{for} \quad  1 \leq j \leq l : \sum_{i=1}^{l_2} m_2(i)b_{i,j}&=&m(j).\\
\end{eqnarray*}

Thus, we may rewrite $d(C,u)$ as
\begin{eqnarray*}
d(C,u)&=&\sum_{i=1}^{l_1}p_1(i)^2-\sum_{i=1}^{l_1}\sum_{j=1}^la_{i,j}^2 \\
&+&\sum_{i=1}^{l_2}p_2(i)^2-\sum_{i=1}^{l_2}\sum_{j=1}^l b_{i,j^2} \\
&+& \sum_{j=1}^l m(j)^2 .
\end{eqnarray*}

With this notation,  our supposition about the dimensions of the direct summands of $B_1$ and $B_2$ translates to

\[
\max_{1\leq i \leq l_1}\{p_1(i) \}, \max_{1\leq i \leq l_2}\{p_2(i)\} \leq \frac{N}{2}.
\]

We start with an easy case for the complicated assumption of Proposition \ref{Prop:DensityHardVersion}.

\begin{lema}\label{LastUpperBound}
Assume

\[
\dim( \mathbb{U}(B_1)) +\dim( \mathbb{U}(B_2)) <N^2. 
\]
Then for any $C$, unital $C^*$-subalgebra of  $B_1$ of dimension 2 and any $u$, unitary in $M_N$ such that
$C$ is contained in $\Ad u(B_2)$, we have $d(C,u)<N^2$.
\end{lema}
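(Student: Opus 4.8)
The plan is to feed the closed form of $d(C,u)$ obtained above into the inequality $d(C,u)<N^2$ and reduce it to a sharp estimate on the two ``mixing'' sums $\sum_i a_{i,1}a_{i,2}$ and $\sum_i b_{i,1}b_{i,2}$. First I would pin down $C$: since $\dim C=2$ we have $C\cong\mathbb{C}^2$, so $C=\mathbb{C}p+\mathbb{C}(1-p)$ for a projection $p$ with $0<\rank p<N$. Writing $r=\rank p$, this forces $l=2$ and $\mu(M_N,C)=[r,N-r]$. Substituting $l=2$ into the rewriting of $d(C,u)$ and using $a_{i,1}+a_{i,2}=p_1(i)$ and $b_{i,1}+b_{i,2}=p_2(i)$, the $B_1$- and $B_2$-lines collapse to $\sum_i 2a_{i,1}a_{i,2}$ and $\sum_i 2b_{i,1}b_{i,2}$, while the last line is $m(1)^2+m(2)^2=r^2+(N-r)^2=N^2-2r(N-r)$. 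Hence $d(C,u)<N^2$ is equivalent to
\[
\sum_i a_{i,1}a_{i,2}+\sum_i b_{i,1}b_{i,2}<r(N-r).
\]

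The heart of the matter is the per-algebra bound $2\sum_i a_{i,1}a_{i,2}\le r(N-r)$ together with its analogue for $b$. From $m_1(i)\ge 1$ and the relations $\sum_i m_1(i)a_{i,j}=m(j)$ one reads off $\sum_i a_{i,1}\le r$, $\sum_i a_{i,2}\le N-r$ and $a_{i,j}\le p_1(i)$; taken alone these only give the weaker $\sum_i a_{i,1}a_{i,2}\le(\sum_i a_{i,1})(\sum_i a_{i,2})\le r(N-r)$, which is off by a factor of two. To gain that factor I would invoke the standing hypothesis that every direct summand of $B_1$ has matrix size at most $N/2$, i.e.\ $p_1(i)\le N/2$. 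Fixing the weighted mass $r=\sum_i m_1(i)a_{i,1}$, the quantity $\sum_i a_{i,1}a_{i,2}=\sum_i a_{i,1}(p_1(i)-a_{i,1})$ is a concave function of $(a_{i,1})_i$ on the polytope cut out by $0\le a_{i,1}\le p_1(i)\le N/2$ and $\sum_i m_1(i)p_1(i)=N$; a short Lagrange/KKT analysis of this relaxation shows its maximum equals $\frac{1}{2}r(N-r)$, attained precisely when $B_1\cong M_{N/2}\oplus M_{N/2}$ with multiplicity one and $p$ has equal rank in the two summands. The identical argument applies to $B_2$.

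Adding the two bounds yields $\sum_i a_{i,1}a_{i,2}+\sum_i b_{i,1}b_{i,2}\le r(N-r)$, so it only remains to exclude equality. Equality would force \emph{both} $B_1$ and $B_2$ into the extremal configuration above, whence $\dim\mathbb{U}(B_1)=\dim\mathbb{U}(B_2)=2\,(N/2)^2=N^2/2$ and therefore $\dim\mathbb{U}(B_1)+\dim\mathbb{U}(B_2)=N^2$, contradicting the hypothesis $\dim\mathbb{U}(B_1)+\dim\mathbb{U}(B_2)<N^2$. Consequently at least one of the two estimates is strict, the displayed strict inequality holds, and $d(C,u)<N^2$.

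I expect the main obstacle to be the sharp per-algebra estimate and its equality case. Obtaining the factor of two, rather than the easy bound $\sum_i a_{i,1}a_{i,2}\le r(N-r)$, genuinely requires the summand-size restriction $p_s(i)\le N/2$ (without it the estimate fails, for instance for a single large summand of multiplicity one), and the extremal analysis must be precise enough to isolate the $M_{N/2}\oplus M_{N/2}$ configuration so that the dimension hypothesis can be leveraged to upgrade the weak inequality to a strict one.
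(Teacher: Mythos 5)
Your argument is correct, and it takes a genuinely different route from the paper's. Both proofs start from the same closed form of $d(C,u)$, but the paper then substitutes $\alpha_i=m_1(i)a_{i,1}/m(1)$, $\beta_i=m_2(i)b_{i,1}/m(1)$, assumes without loss of generality $m(1)\leq N/2$, cancels, and completes squares, so that the strict inequality is supplied in one combined stroke by the hypothesis $\dim\mathbb{U}(B_1)+\dim\mathbb{U}(B_2)<N^2$; there is no per-algebra estimate and no equality-case analysis. You instead reduce to the symmetric inequality $\sum_i a_{i,1}a_{i,2}+\sum_i b_{i,1}b_{i,2}<r(N-r)$ (which is the same identity that drives Proposition \ref{dCAverageOverSubalgebras}), prove the sharp one-algebra bound $\sum_i a_{i,1}a_{i,2}\leq\frac{1}{2}r(N-r)$ from the cap $p_s(i)\leq N/2$, and invoke the dimension hypothesis only to exclude the doubly extremal case $B_1\simeq B_2\simeq M_{N/2}\oplus M_{N/2}$. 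Your explicit appeal to the cap, which is not in the lemma's literal statement, is not a flaw: it is a standing assumption of the section (inherited from Theorem \ref{Theorem:dCLesssOrEqualThanN2}), and it is in fact indispensable, since without it the lemma is false. For example, take $N=20$ and $B_1=B_2\simeq M_{13}\oplus\mathbb{C}^7$ with all multiplicities one, and let $C$ be generated by a rank-$6$ projection sitting inside the $M_{13}$ corner on both sides; then $\dim\mathbb{U}(B_1)+\dim\mathbb{U}(B_2)=352<400$, yet $d(C,u)=2(42+42-6\cdot 14)+400=N^2$. The paper's own proof uses the cap only tacitly, at the step ``we can cancel $m(1)$ \ldots\ thus it suffices to show'': deducing $m(1)X<NY$ from $X<2Y$ and $m(1)\leq N/2$ requires $Y\geq 0$, which is exactly what $p_s(i)\leq N/2$ guarantees (in the example above $X=-1$, $Y=-3/10$, so $X<2Y$ holds while $m(1)X=NY=-6$). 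So your route buys transparency about where the summand-size hypothesis enters, plus the extremal configuration, at the cost of a concave-optimization argument; the paper's completing-the-square is computationally shorter. The one thing a final write-up would owe is the KKT/equality analysis you only sketch: in particular, after splitting a summand of multiplicity $m_1(i)\geq 2$ into $m_1(i)$ multiplicity-one summands (which can only increase $\sum_i a_{i,1}a_{i,2}$), you must check that equality forces multiplicity one and two summands of size exactly $N/2$ with $a_{i,1}=r/2$ in each; these claims are true and routine, but they carry the whole weight of the strictness step.
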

\begin{proof}
We need to show that
\begin{eqnarray*}
d(C,u) &=& \dim \mathbb{U}(B_1) - \dim \mathbb{U}(B_1\cap C') \\
&+&\dim \mathbb{U}(B_2) - \dim \mathbb{U}(\Ad u(B_2)\cap C') \\
&+&\dim \mathbb{U}(C')
\end{eqnarray*}
is strictly less than $N^2$.

For $s=1,2$, $B_s$ is $*$-isomorphic to
$
\oplus_{i=1}^{l_s}M_{p_s(i)}.
$

With the notation \ref{NotationAbelianC}, and for $1\leq i \leq l_1$ or  $1\leq i \leq l_2$, we must have
\begin{eqnarray*}
a_{i,1}+a_{i,2}&=&p_1(i) , \\
b_{i,1}+b_{i,2}&=&p_2(i),\\
m(1)&=&\sum_{i=1}^{l_1}m_1(i)a_{i,1}=\sum_{i=1}^{l_2}m_2(i)b_{i,1}.
\end{eqnarray*}

Then

\begin{eqnarray*}
\dim \mathbb{U}(B_1\cap C')=2\bigg( \sum_{i=1}^{l_1}a_{i,1}^2 \bigg) -2\bigg(  \sum_{i=1}^{l_1}p_1(i)a_{i,1}\bigg)
+\sum_{i=1}^{l_1}p_1(i)^2,
\end{eqnarray*} 

\begin{eqnarray*}
\dim \mathbb{U}(\Ad u(B_2)\cap C')=2\bigg( \sum_{i=1}^{l_2}b_{i,1}^2 \bigg) -2\bigg(  \sum_{i=1}^{l_2}p_2(i)b_{i,1}\bigg)
+\sum_{i=1}^{l_2}p_2(i)^2,
\end{eqnarray*}
and 
$$
\dim \mathbb{U}(C')=2m(1)^2-2Nm(1)+N^2.
$$

Thus $d(C)<N^2$ if and only if
$$
\begin{array}{c}
m(1)^2-\bigg( \sum_{i=1}^{l_1}a_{i,1}^2 + \sum_{i=1}^{l_2}b_{i,1}^2\bigg)
\end{array}
<
\begin{array}{c} 
m(1)N-\bigg( \sum_{i=1}^{l_1}p_1(i)a_{i,1}+ \sum_{i=1}^{l_2}p_2(i)b_{i,1}\bigg).
\end{array}
$$

Now take non negative numbers $\alpha_{i}$, $1\leq i \leq l_1$, and $\beta_{i}$,
 $1\leq i \leq l_2$, such that $m_1(i)a_{i,1}=\alpha_{i}m(1)$,
 $m_2(i)b_{i,1}=\beta_{i}m(1)$ and $\sum_{i=1}^{l_1}\alpha_{i}=\sum_{i=1}^{l_2}\beta_{i}=1$.

With this change of variables the previous inequality becomes
$$
m(1)^2\bigg(  1- \sum_{i=1}^{l_1}\frac{\alpha_{i}^2}{m_1(i)^2}  -\sum_{i=1}^{l_2}\frac{\beta_{i}^2}{m_2(i)^2}   \bigg)
<
m(1)N\bigg( 1-  \sum_{i=1}^{l_1}\frac{p_1(i)\alpha_{i}}{m_1(i)N}  -\sum_{i=1}^{l_2}\frac{p_2(i)\beta_{i}}{m_2(i)N}  \bigg).
$$

We can cancel $m(1)$ because $m(1) \geq 1$. Furthermore, we may assume $m(1) \leq N/2$. Indeed, 
since $m(1)+m(2) =N$ at least one must be less or equal than $N/2$, so we may assume it is $m(1)$. Thus it suffices to show
$$
\bigg(  1- \sum_{i=1}^{l_1}\frac{\alpha_{i}^2}{m_1(i)^2}  -\sum_{i=1}^{l_2}\frac{\beta_{i}^2}{m_2(i)^2}   \bigg)
<
2\bigg( 1-  \sum_{i=1}^{l_1}\frac{p_1(i)\alpha_{i}}{m_1(i)N}  -\sum_{i=1}^{l_2}\frac{p_2(i)\beta_{i}}{m_2(i)N}  \bigg),
$$
or equivalently 
$$
0<  \sum_{i=1}^{l_1} \bigg(\frac{\alpha_{i}^2}{m_1(i)^2} -\frac{2\alpha_{i}p_1(i)}{m_1(i)N} \bigg)+\sum_{i=1}^{l_2} 
\bigg(\frac{\beta_{i}^2}{m_2(i)^2} -\frac{2\beta_{i}p_2(i)}{m_2(i)N} \bigg)+1.
$$

Completing squares we get the above inequality is equivalent to
\begin{eqnarray*}
0&<&  \sum_{i=1}^{l_1} \bigg(\frac{\alpha_{i}}{m_1(i)} -\frac{p_1(i)}{N} \bigg)^2+
\sum_{i=1}^{l_2} \bigg(\frac{\beta_{i}}{m_2(i)} -\frac{p_2(i)}{N} \bigg)^2\\
&+&1-\frac{1}{N^2}\bigg(  \sum_{i=1}^{l_1}p_1(i)^2+\sum_{i=1}^{l_2}p_2(i)^2 \bigg).
\end{eqnarray*}

But this last inequality is true by our assumption that $\dim \mathbb{U}(B_1)+\dim \mathbb{U}(B_2)< N^2$.
\end{proof}

Now the plan is to show that for any $C$, unital C*-subalgebra of $B_1$ and $u$, unitary in $M_N$, such that
$C$ is contained in $\Ad u(B_2)$, there is $C_0$,  a unital C*-subalgebra  of $C$ of dimension 2, such that  $d(C,u)\leq d(C_0,u)$.

\begin{prop}\label{dCAverageOverSubalgebras}
Assume $C$ is a unital C*-subalgebra of $B_1$ unitarily equivalent to
a C*-subalgebra of $B_2$ and $*$-isomorphic to $\mathbb{C}^l$, with $l\geq2$.
For $1\leq r \not= s \leq l$ we define $C_{(r,s)}$ as the unital C*-subalgebra of $C$
obtained by merging coordinates $r$ and $s$ in $C$ (in a given fixed order). 
Let $I=\{(r,s): 1\leq r \not= s \leq l\}$. 

Then
\[
d(C_{(r,s)}) = d(C)+2\bigg(m(r)m(s)-  \sum_{i=1}^{l_1}a_{i,r}a_{i,s} - \sum_{i=1}^{l_2}b_{i,r}b_{i,s} \bigg). 
\]
In consequence,  $d(C)\leq d(C_{(r,s)})$ for some $(r,s)\in I$ if  
\[
\sum_{i=1}^{l_1}a_{i,r}a_{i,s} +\sum_{i=1}^{l_2}b_{i,r}b_{i,s} \leq m(r)m(s).
\]

\end{prop}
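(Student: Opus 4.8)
The plan is to read off $d(C_{(r,s)})$ from the rewritten expression for $d(C,u)$ displayed above and to compare it term by term with $d(C)$. The whole argument amounts to understanding how the three matrices of partial multiplicities $\mu(B_1,C)$, $\mu(B_2,\Ad u(C))$ and $\mu(M_N,C)$ change when coordinates $r$ and $s$ of $C$ are merged, and then invoking the elementary identity $(x+y)^2-x^2-y^2=2xy$.

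First I would record the effect of the merging. Write $q_1,\dots,q_l$ for the minimal projections of $C\cong\mathbb{C}^l$, so that $C_{(r,s)}$ is the abelian subalgebra whose minimal projections are the $q_j$ with $j\neq r,s$ together with $q_r+q_s$. For each $i$, the $i$-th partial multiplicity of the merged coordinate equals the rank of the image of $q_r+q_s$ in the $i$-th direct summand of $B_1$; since $q_r$ and $q_s$ are orthogonal and rank is additive on orthogonal projections, this rank is $a_{i,r}+a_{i,s}$. Hence $\mu(B_1,C_{(r,s)})$ agrees with $\mu(B_1,C)$ except that columns $r$ and $s$ are replaced by a single column with entries $a_{i,r}+a_{i,s}$. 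The same reasoning gives the merged column $b_{i,r}+b_{i,s}$ for $\mu(B_2,\Ad u(C_{(r,s)}))$ and the merged entry $m(r)+m(s)$ for $\mu(M_N,C_{(r,s)})$. Crucially, the dimensions $p_1(i)$ and $p_2(i)$ of the direct summands of $B_1$ and $B_2$ do not depend on $C$ and are therefore unchanged.

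Substituting into the rewritten formula, the terms $\sum_i p_1(i)^2$ and $\sum_i p_2(i)^2$ cancel in the difference $d(C_{(r,s)})-d(C)$. On the merged column of the $a$-sum the identity $(x+y)^2-x^2-y^2=2xy$ contributes $+2\sum_{i=1}^{l_1}a_{i,r}a_{i,s}$, and similarly the $b$-sum contributes $+2\sum_{i=1}^{l_2}b_{i,r}b_{i,s}$ and the $m$-sum contributes $+2m(r)m(s)$. Tracking the signs with which these sums enter $d$ (negative for the $a$- and $b$-sums, positive for the $m$-sum) yields
\[
d(C_{(r,s)})=d(C)+2\Big(m(r)m(s)-\sum_{i=1}^{l_1}a_{i,r}a_{i,s}-\sum_{i=1}^{l_2}b_{i,r}b_{i,s}\Big),
\]
which is the asserted identity. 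The concluding statement is then immediate: for a pair $(r,s)\in I$ one has $d(C)\leq d(C_{(r,s)})$ exactly when the parenthesized quantity is nonnegative, that is, when $\sum_i a_{i,r}a_{i,s}+\sum_i b_{i,r}b_{i,s}\leq m(r)m(s)$.

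There is no real obstacle here; the proof is bookkeeping once the rewritten formula for $d(C,u)$ is in hand. The one step that deserves care is the description of the merged partial multiplicity matrices, and even that reduces to additivity of rank on orthogonal projections. It is also worth double-checking the sign conventions in the rewritten formula so that the negative signs on the $a$- and $b$-contributions are correctly propagated.
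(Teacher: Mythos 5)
Your proof is correct and takes essentially the same route as the paper: both arguments reduce to the observation that merging coordinates $r$ and $s$ replaces the relevant columns and entry of the multiplicity data by $a_{i,r}+a_{i,s}$, $b_{i,r}+b_{i,s}$ and $m(r)+m(s)$, and then apply the identity $(x+y)^2-x^2-y^2=2xy$ with the signs carried by the three terms of $d$. Your explicit justification of the merged columns via additivity of rank on orthogonal projections is a detail the paper leaves implicit, but the computation is otherwise identical.
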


\begin{proof}

  With notation \ref{NotationAbelianC}, we have
\begin{eqnarray*}
\dim \mathbb{U}(B_1\cap C_{(r,s)}')&=&\sum_{i=1}^{l_1}(a_{i,r}+a_{i,s})^2+\sum_{i=1}^{l_1} \sum_{j=1, j\not=r,s}^{l} a_{i,j}^2  \\
&=& 2\bigg( \sum_{i=1}^{l_1}a_{i,r}a_{i,s} \bigg)+ \dim \mathbb{U}(B_1\cap C').
\end{eqnarray*}

Similarly 
\begin{eqnarray*}
\dim \mathbb{U}(B_2\cap u^*C_{(r,s)}'u)&=&2\bigg( \sum_{i=1}^{l_2}b_{i,r}b_{i,s}\bigg)+\dim \mathbb{U}(B_2\cap u^*C'u), \\
\dim \mathbb{U}(C_{(r,s)}') &=& 2m(r)m(s)+ \dim \mathbb{U}(C').
\end{eqnarray*}

Thus $d(C_{(r,s)}) = d(C)+2\bigg(m(r)m(s)-  \sum_{i=1}^{l_1}a_{i,r}a_{i,s} - \sum_{i=1}^{l_2}b_{i,r}b_{i,s} \bigg) $. 

\end{proof}

\begin{prop}\label{Prop:IncreasingProperty}
With the same  notation as Proposition \ref{dCAverageOverSubalgebras}, if
\[
\max_{1\leq i \leq l_1} \{p_1(i)\},  \max_{1\leq i \leq l_2}\{p_2(i) \} \leq \frac{N}{2},
\]

then, there are  $1\leq r_0 \leq l$ and $ 1\leq s_0 \leq l$, $r_0\not=s_0$, such that
$d(C,u) \leq d(C_{(r_0,s_0)},u)$. 
\end{prop}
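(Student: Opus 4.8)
The goal of Proposition \ref{Prop:IncreasingProperty} is to show that, under the constraint that every direct summand of $B_1$ and $B_2$ has dimension at most $N/2$, at least one of the merging operations does not decrease $d(C,u)$. By Proposition \ref{dCAverageOverSubalgebras}, this amounts to finding a single pair $(r_0,s_0)$ with
\[
\sum_{i=1}^{l_1}a_{i,r_0}a_{i,s_0} +\sum_{i=1}^{l_2}b_{i,r_0}b_{i,s_0} \leq m(r_0)m(s_0).
\]
Rather than exhibit such a pair directly, the plan is to argue by averaging: I would show that the average of the quantity $m(r)m(s)-\sum_i a_{i,r}a_{i,s}-\sum_i b_{i,r}b_{i,s}$ over all pairs $(r,s)\in I$ is nonnegative, whence at least one summand must be nonnegative and the desired pair exists.

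First I would rewrite the averaged inequality using the off-diagonal sums over all ordered (or unordered) pairs. The key identity is that for any sequence $(x_1,\dots,x_l)$ one has $\sum_{r\neq s}x_rx_s = \bigl(\sum_j x_j\bigr)^2 - \sum_j x_j^2$. Applying this to $x_j=m(j)$, to $x_j=a_{i,j}$ for each fixed $i$, and to $x_j=b_{i,j}$ for each fixed $i$, the sum over $I$ of the expression becomes
\[
\sum_{(r,s)\in I}\!\Bigl(m(r)m(s)-\!\sum_{i=1}^{l_1}a_{i,r}a_{i,s}-\!\sum_{i=1}^{l_2}b_{i,r}b_{i,s}\Bigr)
=\Bigl(N^2-\!\sum_j m(j)^2\Bigr)-\sum_{i=1}^{l_1}\!\bigl(p_1(i)^2-\!\sum_j a_{i,j}^2\bigr)-\sum_{i=1}^{l_2}\!\bigl(p_2(i)^2-\!\sum_j b_{i,j}^2\bigr),
\]
where I have used the column and row sum relations $\sum_j m(j)=N$, $\sum_j a_{i,j}=p_1(i)$, $\sum_j b_{i,j}=p_2(i)$ recorded before Notation \ref{NotationAbelianC}.

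Next I would bound the right-hand side from below using the summand-dimension hypothesis. The plan is to use the relations $\sum_{i}m_1(i)a_{i,j}=m(j)$ and $\sum_i m_2(i)b_{i,j}=m(j)$ together with $\sum_j m_1(j)a_{i,j}\cdots$-type column identities to compare $\sum_{i,j}a_{i,j}^2$ and $\sum_{i,j}b_{i,j}^2$ against $\sum_j m(j)^2$. The constraint $p_1(i),p_2(i)\leq N/2$ should enter exactly as in Lemma \ref{LastUpperBound}: there, completing the square produced the term $1-\frac{1}{N^2}(\sum_i p_1(i)^2+\sum_i p_2(i)^2)$, and the summand bound is what keeps this nonnegative. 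Here I expect an analogous completion-of-squares or Cauchy–Schwarz estimate, applied coordinatewise in $j$, to show that each bracketed defect $p_s(i)^2-\sum_j(\cdot)^2$ is controlled by the corresponding contribution to $N^2-\sum_j m(j)^2$, so that the whole average is nonnegative.

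The main obstacle I anticipate is the bookkeeping needed to make the comparison between the $B_1$/$B_2$ defect terms and the $N^2-\sum_j m(j)^2$ term precise, since the multiplicities $m_1(i),m_2(i)$ couple the $a_{i,j}$ (resp.\ $b_{i,j}$) to $m(j)$ nonlinearly. The hypothesis $p_s(i)\le N/2$ is presumably used to guarantee that no single summand $M_{p_s(i)}$ is ``too large'' relative to $M_N$, which is precisely the weighting one needs for the averaged defect to stay nonnegative; isolating exactly where that factor of two is consumed will be the delicate point. Once the averaged quantity is shown to be nonnegative, the conclusion is immediate: since $I$ is a finite nonempty index set, some pair $(r_0,s_0)$ realizes a nonnegative summand, and Proposition \ref{dCAverageOverSubalgebras} then yields $d(C,u)\le d(C_{(r_0,s_0)},u)$.
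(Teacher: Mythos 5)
Your reduction to Proposition \ref{dCAverageOverSubalgebras} and the averaging identity are both correct: summing over ordered pairs one indeed gets
\[
\sum_{(r,s)\in I}\Bigl(m(r)m(s)-\sum_{i}a_{i,r}a_{i,s}-\sum_i b_{i,r}b_{i,s}\Bigr)
=\Bigl(N^2-\sum_j m(j)^2\Bigr)-\sum_{i}\Bigl(p_1(i)^2-\sum_j a_{i,j}^2\Bigr)-\sum_i\Bigl(p_2(i)^2-\sum_j b_{i,j}^2\Bigr),
\]
and if this were shown nonnegative, the existence of a good pair $(r_0,s_0)$ would follow. But the nonnegativity of this average is the entire content of the proposition, and your proposal leaves it unproven: the passages ``I expect an analogous completion-of-squares or Cauchy--Schwarz estimate'' and ``isolating exactly where that factor of two is consumed will be the delicate point'' defer precisely the step that carries all the difficulty. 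Moreover, the analogy with Lemma \ref{LastUpperBound} is misattributed: in that lemma the completed square is kept nonnegative by the \emph{hypothesis} $\dim\mathbb{U}(B_1)+\dim\mathbb{U}(B_2)<N^2$, i.e.\ $\sum_i p_1(i)^2+\sum_i p_2(i)^2<N^2$, not by the summand bound $p_s(i)\le N/2$ --- and Proposition \ref{Prop:IncreasingProperty} does \emph{not} assume that dimension inequality, so no LastUpperBound-style computation can be imported.

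Two further points show the missing step is genuinely delicate rather than routine bookkeeping. First, the pairwise estimates fail badly: if two columns satisfy $m(r)+m(s)\le N/2$, a single summand of $B_1$ may carry all of both columns, giving $\sum_i a_{i,r}a_{i,s}=m(r)m(s)$, and $B_2$ can do the same, making that pair's contribution $-m(r)m(s)$; so the average, if nonnegative, is so only through compensation across pairs, which must be argued. Second, the averaged inequality is \emph{tight}: when $B_1$ and $B_2$ each consist of two summands of dimension $N/2$ with $a_{i,j}=b_{i,j}=m(j)/2$, every pair contributes exactly zero, so there is no slack for crude Cauchy--Schwarz losses --- any proof has to be sharp. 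This is exactly why the paper abandons averaging and instead locates one good pair directly, via a four-case analysis on whether columns of $a$ and $b$ are concentrated ($\max_i a_{i,r}>m(r)/2$), using a convexity/extreme-point argument on the polytope $P=\{x\ge 0:\sum_i x(i)\le 1\}$ in the mixed case, and in the doubly concentrated case a combinatorial argument with the partitions $\pi_A,\pi_B$ induced by the maximizing rows, showing $\bigcup_{\alpha\in\pi_A}\alpha\times\alpha\cup\bigcup_{\beta\in\pi_B}\beta\times\beta$ cannot cover $[l]\times[l]$ (here the hypothesis $p_s(i)\le N/2$ is what forces $|\pi_A|,|\pi_B|\ge 2$). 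I could not produce a counterexample to your averaged inequality, so your route is not demonstrably doomed, but as it stands the proposal proves nothing beyond the (easy) algebraic identity: the central inequality is asserted on hope, with the one concrete mechanism you cite for it unavailable under the stated hypotheses.
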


\begin{proof}

$\bullet$ \textbf{Case 1}. There are  $r_0$ and  $s_0$ such that
\[
\max_{1\leq i \leq l_1}\{a_{i,r_0}\}\leq \frac{m(r_0)}{2}, \quad
\max_{1\leq i \leq l_2}\{b_{i,s_0}\}\leq \frac{m(s_0)}{2}.
\]

From the hypothesis, the fact that  $\sum_{i}m_1(i)a_{i,j}=m(j)$ and $m_1(i) \geq 1$ for all
$i$, we deduce that for any $j$
\[
\sum_{i=1}^{l_1}a_{i,r}a_{i,j}\leq \frac{m(r)m(j)}{2}. 
\]

Similarly, for any $j$, 
\[
\sum_{i=1}^{l_2}b_{i,j}b_{i,s}\leq \frac{m(j)m(s)}{2}. 
\]

Thus we conclude
\[
\sum_{i=1}^{l_1}a_{i,r}a_{i,s}+\sum_{i=1}^{l_2}b_{i,r}b_{i,s} \leq m(r)m(j).
\]

$\bullet$ \textbf{Case 2}. Assume that for all $r$,
\[
\max_{1\leq i \leq l_1}\{a_{i,r}\} >  \frac{m(r)}{2},
\]

and for some $s_0$,
\[
\max_{1\leq i\leq l_2}\{ b_{i,s_0}\} \leq \frac{m(s_0)}{2}.
\]

For any $j$, let $i(j)$ be such that
\[
\max_{1\leq i \leq l_1}\{a_{i,j}\}=a_{i(j),j}.
\]

By assumption $a_{i(j),j} > m(j)/2$ for all $j$.

First we show that for all $s$ there is $r$ such that $i(s)\not= i(r)$. To prove it
we proceed by contradiction. So, we suppose there is $s$ such that for all $r$, $i(r)=i(s)$.

Let $i(s)=i_0$. Then $a_{i_0,j}>m(j)/2$ for all $j$. Hence, summing over
$j$ brings
\[
p_1(i_0)= \sum_{j}a_{i_0,j}>\frac{N}{2}
\]
 a contradiction with our assumption.

Before proceeding, we  let $\tilde{a}_{i,j}=\frac{a_{i,j}}{m(j)}$. Then, for all $j$,
\[
\sum_{i}\tilde{a}_{i,j}\leq 1.
\]

Define
\[
P=\left\{ x\in \mathbb{R}^{l_1}: \forall i, x(i) \geq 0, \quad  \sum_{i}x(i)\leq 1 \right\}.
\]
 
Then $P$ is compact,  convex and notice that $x_j:=(\tilde{a}_{i,j})_{i=1}^{l_1}$ lie in $P$.

For $1\leq i \leq l_2$ let 
\[
P_i=\{x\in P: x(i) \geq 1/2\}.
\]

By assumption $x_j\in P_{i(j)}$ and notice that $P_i$ is compact and  convex.

For a vector $x\in \mathbb{R}^{l_1}$,  take the linear functional  $F_x:\mathbb{R}^{l_1}\to \mathbb{R}$, defined by
\[
F_x(y)=\sum_{i=1}^{l_1}x(i)y(i).
\] 
Then  $\sum_{i}\tilde{a}_{i,r}\tilde{a}_{i,s}=F_{x_r}(x_s)=F_{x_s}(x_r)$.

We will show that if $i(r)\not=i(s)$, 
\[
\max_{x\in P_{i(r)}}\{F_{x_s}(x)\}\leq \frac{1}{2},
\]

which in consequence proves $\sum_i a_{i,r}a_{i,s} \leq \frac{m(r)m(s)}{2}$.

Indeed, since $F_{x_s}$ is linear and $P_{i(r)}$ is compact and convex,
\[
\max_{x\in P_{i(r)}}\{ F_{x_s}(x) \} = F_{x_s}(x_{i(r)}^*),
\]
where $x_{i(r)}^*$ is an extreme point of $P_{i(r)}$. But notice that
 $F_{x_s}(x_{i(r)}^*)=F_{x_{i(r)}^*}(x_s)$, so
$F_{x_s}(x_{i(r)}^*)\leq F_{x_{i(r)}^*}(x_{i(s)}^*)$ for some extreme point of $P_{i(s)}$. Thus
\[
\max_{x\in P_r}\{F_{x_s}(x)\} \leq \max{\langle x_{i(r)}^*, x_{i(s)}^* \rangle},
\]
where the maximum is taken over $x_{i(r)}^*$ and $x_{i(s)}^*$,  extreme points of $P_{i(r)}$ 
and $P_{i(r)}$ respectively.

If $\{e_i\}_{i=1}^{l_1}$ denotes the canonical basis, the extreme points of $P_{i(r)}$ are
 $e_{i(r)}$ and $(1/2)e_{i(r)}+(1/2)e_{i}$, $i\not= i(r)$. Thus (analysing all possibilities)
\[
\max\{\langle x_{i(r)}^*,  x_{i(s)}^* \rangle  \}=\frac{1}{2}.
\]

Lastly, take $r_0$ such that $i(r_0)\not=i(s_0)$ and get
\[
\sum_{i}a_{i,r_0}a_{i,s_0}\leq \frac{m(r_0)m(s_0)}{2}
\]
an from $\max_{1\leq i \leq l_2} \{b_{i,s_0} \} \leq \frac{m(s_0)}{2}$ we get
\[
\sum_i b_{i,r_0}b_{i,s_0} \leq \frac{m(r_0)m(s_0)}{2}
\]
and conclude $d(C_{(r_0,s_0)})\leq d(C)$.

$\bullet$ \textbf{Case 3}. Similar to case 2, interchanging roles of $a$ and $b$.

$\bullet$ \textbf{Case 4}. Assume that for all $r$
\[
\max_{1\leq  i \leq l_1}\{a_{i,r}\}>\frac{m(r)}{2}, \max_{1\leq  i \leq l_2}\{b_{i,r}\}>\frac{m(r)}{2}. 
\]

Let, for $1\leq r \leq l$, $i_A(r)$ and $i_B(r)$ be  such that
\begin{eqnarray*}
\max_{1\leq i \leq l_1}\{a_{i,r}\}=a_{i_A(r),r},\\
\max_{1\leq i \leq l_2}\{b_{i,r}\}=b_{i_B(r),r}.\\
\end{eqnarray*}

Notice that there is a unique such  $i_A(r)$, because 
$m(r)=\sum_{i=1}^{l_1}a_{i,r}m_1(i) \geq \sum_{i=1}^{l_1}a_{i,r}$. Similarly for $i_B(r)$. Thus
$i_A$ and $i_B$ define functions from $[l]=\{1,\dots, l\}$ to $[l_1]=\{1,\dots, l_1\}$ and
$[l_2]=\{1,\dots, l_2\}$, respectively. This functions in  turn  induce partitions, $\pi_A$ and
$\pi_B$, of $[l]$, where two points are in the same block of $\pi_A$ if and only if their image under
$i_A$ (respectively $i_B$) are the same.

From the assumption
\[
\max_{1\leq i \leq l_1}\{p_1(i)\}, \max_{1\leq i \leq l_2}\{p_2(i)\} \leq \frac{N}{2}
\]
we get that $|\pi_A|,|\pi_B| \geq 2$.

From the previous case, if $r,s\in [l]$ lie in different blocks of $\pi_A$ 
\[
\sum_{i}a_{i,r}a_{i,s}\leq \frac{m(r)}{2}.
\]

Thus, we only need to find $r_0$ and $s_0$ such that they lie in different blocks of $\pi_A$ AND $\pi_B$.
The latter is equivalent to show
\[
\cup_{\alpha \in \pi_A} \alpha \times \alpha \cup  \cup_{\beta \in \pi_B} \beta\times \beta 
\]

is a proper subset of $[l]\times [l]$.

Pick $\beta_0 \in \pi_B$ such that $|\beta_0|=\min_{\beta \in \pi_B}\{|\beta|\}$ and let
 $\beta_1=\cup_{\beta \in \pi_B, \beta \not= \beta_0}\beta$. We rename the 
elements of $[l]$ so that $\beta_0=\{1,\dots, b\}$,$\beta_1=\{b+1,\dots l\}$ and
$b \leq l-b $. In order to get a contradiction  we will assume that
\[
\cup_{\alpha\in \pi_A} \alpha \times \alpha \cup \beta_0\times \beta_0 \cup \beta_1\times \beta_1=[l]\times[l].
\]

Take $(x,y)\in \beta_0 \times \beta_1$ arbitrary. Since we are assuming equality, there is
$\alpha \in \pi_A$ such that $(x,y)\in \alpha \times \alpha$. Hence we conclude $\beta_0\cup \beta_1 \subseteq \alpha$.
But  $\beta_0\cup \beta_1=[l]$ and in consequence $\alpha=[l]$ and $|\pi_A|=1$, a contradiction. 

We conclude $\cup_{\alpha \in \pi_A} \alpha \times \alpha \cup \cup_{\beta \in \pi_B} \beta \times \beta $, 
is a proper subset of $[l]\times [l]$.

\end{proof}

Finally, a proof of  Theorem \ref{Theorem:dCLesssOrEqualThanN2} is at  hand.

\begin{proof}[Proof Theorem \ref{Theorem:dCLesssOrEqualThanN2} ]

From Proposition \ref{Prop:IncreasingProperty} it suffices to show that $d(C,u)<N^2$ for all $C$ of dimension 2, but this
is precisely Lemma \ref{LastUpperBound}.
\end{proof}

We end this section with an easy example where we can readily conclude the density of $\Delta(B_1,B_2;M_N)$.

\begin{coro}\label{Coro:EasyDensity}
With the previous notation, assume that all the entries of $\mu(M_N,B_s)$, $s=1,2$, are greater or equal than 2. Then $\Delta(B_1,B_2;M_N)$ is dense in $M_N$.
\end{coro}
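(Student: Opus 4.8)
The plan is to deduce the statement directly from Theorem \ref{Theorem:dCLesssOrEqualThanN2} by checking that the hypothesis $m_s(i)\geq 2$ forces both of that theorem's assumptions. Recall from Notation \ref{NotationAbelianC} and the algebraic relations following it that the inclusion $B_s\hookrightarrow M_N$ carries the partial-multiplicity identity $N=\sum_{i=1}^{l_s}m_s(i)p_s(i)$, where $p_s(1),\dots,p_s(l_s)$ are the sizes of the direct summands of $B_s$ and the $m_s(i)$ are the entries of $\mu(M_N,B_s)$. These, together with $\dim\mathbb{U}(B_s)=\sum_{i=1}^{l_s}p_s(i)^2$, are the only ingredients I expect to need.

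First I would exploit the standing assumption that every $m_s(i)\geq 2$. Since each $p_s(i)\geq 1$ and all summands contribute nonnegatively, the identity above gives $N=\sum_{i}m_s(i)p_s(i)\geq 2\sum_{i}p_s(i)$, so that $\sum_{i=1}^{l_s}p_s(i)\leq N/2$ for $s=1,2$. In particular each individual block satisfies $p_s(i)\leq N/2$, which is exactly the size condition required by Proposition \ref{Prop:IncreasingProperty}, hence by Theorem \ref{Theorem:dCLesssOrEqualThanN2}; note this also yields $\dim M_{p_s(i)}=p_s(i)^2\leq N^2/4\leq N^2/2$, so the summand-size hypothesis holds in either reading. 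Next I would bound the unitary-group dimensions: using the elementary inequality $\sum_i p_s(i)^2\leq\bigl(\sum_i p_s(i)\bigr)^2$ for nonnegative integers, the previous bound gives $\dim\mathbb{U}(B_s)=\sum_i p_s(i)^2\leq (N/2)^2=N^2/4$, whence $\dim\mathbb{U}(B_1)+\dim\mathbb{U}(B_2)\leq N^2/2<N^2$. This is the first hypothesis of Theorem \ref{Theorem:dCLesssOrEqualThanN2}.

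With both hypotheses verified, Theorem \ref{Theorem:dCLesssOrEqualThanN2} applies and gives that $\Delta(B_1,B_2;M_N)$ is dense in $\mathbb{U}(M_N)$, as claimed. I expect no genuine obstacle in this argument: its entire content is the observation that a multiplicity-two assumption on the embeddings into $M_N$ automatically confines every block of $B_1$ and $B_2$ to at most half of $N$, which is precisely the regime in which the dimension-counting criterion of the previous subsection closes with room to spare. The only point worth recording carefully is the bookkeeping between the two equivalent forms of the block-size bound ($p_s(i)\leq N/2$ versus $p_s(i)^2\leq N^2/2$), so that the cited theorem is invoked under the hypothesis in the form it was actually proved.
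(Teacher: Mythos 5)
Your proof is correct and takes essentially the same route as the paper: from the identity $\sum_{i=1}^{l_s} m_s(i)p_s(i)=N$ with $m_s(i)\geq 2$ you deduce $\sum_i p_s(i)\leq N/2$, hence $p_s(i)\leq N/2$ and $\dim\mathbb{U}(B_1)+\dim\mathbb{U}(B_2)\leq N^2/4+N^2/4<N^2$, and then apply Theorem \ref{Theorem:dCLesssOrEqualThanN2}. Your closing remark about the two readings of the block-size hypothesis ($p_s(i)\leq N/2$ versus $p_s(i)^2\leq N^2/2$) is a reasonable clarification of the theorem's slightly ambiguous statement, but the argument itself is the paper's.
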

\begin{proof}
With the previous notation our assumption implies $m_s(i) \geq 2$ for all $s=1,2$ and all $i\in [l_s]$. Since
$\sum_{i=1}^{l_s}m_s(i)p_s(i)=N$ it follows that $p_s(i) \leq N/2$ and

\[
\dim(\mathbb{U}(B_1))+\dim(\mathbb{U}(B_2)) =\sum_{i=1}^{l_1}p_1(i)^2+ \sum_{i=1}^{l_2}p_2(i)^2 \leq \frac{N^2}{4}+\frac{N^2}{4} <N^2.
\]

Thus an application of \ref{Theorem:dCLesssOrEqualThanN2} finishes the proof.
\end{proof}

\section{Primitivity}

\subsection{The linking path condition}
Before we start we want to see what could possibly prevent an
 amalgamated  full free product of the form $A_1*_DA_2$ from being primitive. A partial answer is given by Pedersen in
~\cite{Pedersen-Pullback&Pushout}, Proposition 4.7, and for the convenience of the reader we state it here. Recall that a morphism
between C*-algebras is called proper if it sends an approximate unit in the domain to an approximate unit
in the range. Since we are dealing with unital C*-algebras proper just means a unital morphism. One more note,
 Pedersen uses a categorical nomenclature, so an amalgamated full free product is a push out diagram.

\begin{prop}\label{Prop:Pedersen}
Consider a sequence of push out diagrams as below, to the left, and assume each $\alpha_n$
is a proper morphism. Then we obtain the new push out diagram below, to the right: 
\[
\begin{tabular}{c c c}
\xymatrix{
C_n \ar[r]^{\beta_n}\ar[d]^{\alpha_n} &   B_n \ar[d]^{\gamma_n} \\
A_n \ar[r]_{\delta_n} & X_n
} & \textrm{  gives} & 
\xymatrix{
\oplus_{n}C_n \ar[r]^{\oplus_{n}\beta_n}\ar[d]^{\oplus_n \alpha_n} &   \oplus_n B_n \ar[d]^{\oplus_n \gamma_n} \\
\oplus_n A_n \ar[r]_{\oplus_n \delta_n} & \oplus_n X_n
}
\end{tabular}
\]
\end{prop}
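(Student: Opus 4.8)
The plan is to verify directly that $\bigoplus_n X_n$, equipped with the morphisms $\bigoplus_n \delta_n$ and $\bigoplus_n \gamma_n$, satisfies the universal property that characterizes the pushout of the pair $\bigoplus_n \alpha_n \colon \bigoplus_n C_n \to \bigoplus_n A_n$ and $\bigoplus_n \beta_n \colon \bigoplus_n C_n \to \bigoplus_n B_n$; since a pushout is unique up to canonical isomorphism, this identifies $\bigoplus_n X_n$ with the amalgamated free product $\bigoplus_n A_n *_{\bigoplus_n C_n} \bigoplus_n B_n$. Commutativity of the right-hand square is immediate, because $(\bigoplus_n \delta_n)(\bigoplus_n \alpha_n) = \bigoplus_n(\delta_n \alpha_n) = \bigoplus_n(\gamma_n \beta_n) = (\bigoplus_n \gamma_n)(\bigoplus_n \beta_n)$, so only the universal factorization remains to be produced.

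First I would fix a target C*-algebra $Y$ together with proper morphisms $\Phi \colon \bigoplus_n A_n \to Y$ and $\Psi \colon \bigoplus_n B_n \to Y$ satisfying $\Phi \circ (\bigoplus_n \alpha_n) = \Psi \circ (\bigoplus_n \beta_n)$, and I would construct a unique proper $\Theta \colon \bigoplus_n X_n \to Y$ with $\Theta \circ (\bigoplus_n \delta_n) = \Phi$ and $\Theta \circ (\bigoplus_n \gamma_n) = \Psi$. The decisive role of properness enters here: since each $\alpha_n$ is unital, $(\bigoplus_n \alpha_n)(1_{C_n}) = 1_{A_n}$ inside $\bigoplus_m A_m$, and likewise $(\bigoplus_n \beta_n)(1_{C_n}) = 1_{B_n}$; feeding the central projection $1_{C_n}$ through the compatibility relation yields $\Phi(1_{A_n}) = \Psi(1_{B_n}) =: q_n$, a family of mutually orthogonal projections in $Y$.

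Next I would compress. Because $1_{A_n}$ is central in $\bigoplus_m A_m$ and cuts out the summand $A_n$, the map $\Phi$ restricts to a unital morphism $\Phi_n \colon A_n \to q_n Y q_n$, and similarly $\Psi_n \colon B_n \to q_n Y q_n$; the relation above localizes to $\Phi_n \alpha_n = \Psi_n \beta_n$. The levelwise universal property of $X_n = A_n *_{C_n} B_n$ then furnishes a unique proper $\Theta_n \colon X_n \to q_n Y q_n$ with $\Theta_n \delta_n = \Phi_n$ and $\Theta_n \gamma_n = \Psi_n$. Since the ranges $q_n Y q_n$ are mutually orthogonal, the assignment $\Theta\bigl((x_n)_n\bigr) = \sum_n \Theta_n(x_n)$ is a well-defined $*$-homomorphism on the algebraic direct sum (the cross terms vanish by $q_n q_m = 0$), bounded by $\sup_n \|x_n\|$, hence extends to all of $\bigoplus_n X_n$ and satisfies the two intertwining identities. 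For uniqueness, any candidate must send $1_{X_n} = \delta_n(1_{A_n})$ to $\Phi(1_{A_n}) = q_n$, so it necessarily compresses $X_n$ into $q_n Y q_n$, where it agrees with $\Theta_n$ by the levelwise uniqueness; continuity then pins $\Theta$ down on all of $\bigoplus_n X_n$.

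The hard part will be the assembly step in the genuinely infinite case: one must check that $\sum_n \Theta_n(x_n)$ converges in the relevant direct sum and that $\Theta$ is again proper, which is precisely where orthogonality of the $q_n$ and the properness of $\Phi, \Psi$ (ensuring $\sum_n q_n$ behaves as an approximate unit on the image) are used. I would isolate this as the only nontrivial analytic point; the algebraic backbone---decompose along the central projections $1_{A_n}$, solve levelwise, reassemble---is forced by the universal property and collapses exactly when the $\alpha_n$ fail to be proper, since then $\Phi(1_{A_n})$ and $\Psi(1_{B_n})$ need not coincide and the summand-by-summand matching is lost.
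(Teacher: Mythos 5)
The first thing to note is that the paper itself contains no proof of this proposition: it is quoted, explicitly for the reader's convenience, from Pedersen's pullback--pushout paper (Proposition 4.7 there), so your attempt can only be measured against the statement as given. Your overall architecture --- verify the universal property for $\bigoplus_n X_n$ by decomposing along summands, solving levelwise, and reassembling --- is the right one, and your assembly and uniqueness steps are sound. But there is a genuine gap at the very first move: the hypothesis grants only that each $\alpha_n$ is proper, while your step ``likewise $(\bigoplus_n \beta_n)(1_{C_n}) = 1_{B_n}$'' silently assumes each $\beta_n$ is proper as well. This asymmetry in the statement is not an accident. In the unital category every morphism is proper, so the hypothesis ``each $\alpha_n$ proper'' only has content when the ambient category is that of all C*-algebras and all $*$-homomorphisms, with $\beta_n$ arbitrary and the $B_n$ possibly non-unital. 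In that setting the compatibility relation applied to $1_{C_n}$ gives only $q_n := \Phi(1_{A_n}) = \Psi(\beta_n(1_{C_n}))$, and $\beta_n(1_{C_n})$ is just some projection in $B_n$, not a unit; consequently $\Psi(B_n)$ need not lie in the corner $q_n Y q_n$, and the levelwise \emph{unital} pushout you invoke to produce $\Theta_n$ is not applicable. (Concrete failure: take $\beta_n = 0$; then $q_n = 0$ but $\Psi$ can be nonzero on $B_n$, so the compression scheme collapses while the proposition remains true.) Relatedly, you verify the universal property only against \emph{proper} test morphisms $\Phi, \Psi$, whereas the pushout property is with respect to arbitrary morphisms; in fact your construction never uses properness of $\Phi, \Psi$, so that restriction should simply be dropped.

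The repair keeps your skeleton but replaces the corner compression by an approximate-unit transfer, which is where the properness of $\alpha_n$ is really spent. Do not compress at all: restrict $\Phi, \Psi$ to the summands to get $\Phi_n \alpha_n = \Psi_n \beta_n$ and let the levelwise pushout produce $\Theta_n \colon X_n \to Y$ with no corner constraint. Then use properness of $\alpha_n$ exactly once, to prove the ranges are mutually orthogonal: if $(u_\lambda)$ is an approximate unit of $C_n$, then $\alpha_n(u_\lambda)$ is an approximate unit of $A_n$, so for $a \in A_n$ and $b \in B_m$ with $m \neq n$ one has $\Phi(a)\Psi(b) = \lim_\lambda \Phi(a\,\alpha_n(u_\lambda))\Psi(b) = \lim_\lambda \Phi(a)\,\Psi(\beta_n(u_\lambda)\,b) = 0$, since $\beta_n(u_\lambda) \in B_n$ is orthogonal to $B_m$ inside $\bigoplus_k B_k$; the symmetric computation kills $\Psi(B_n)\Phi(A_m)$, and since $X_n$ is the closed span of words in $\delta_n(A_n) \cup \gamma_n(B_n)$, this yields $\Theta_n(X_n)\,\Theta_m(X_m) = 0$ for $n \neq m$. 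That orthogonality is precisely what makes $\Theta\bigl((x_n)_n\bigr) = \sum_n \Theta_n(x_n)$ converge on the $c_0$-direct sum and what, combined with levelwise uniqueness and density of the algebraic direct sum, pins down $\Theta$ --- exactly as in your final two steps. As written, your argument proves the proposition only under the stronger hypothesis that both legs are proper; to be fair, that weaker version is all this paper ever uses, since in its application both families of maps are unital inclusions of the summands of $D$ into $A_1$ and $A_2$, but it is not the statement you were asked to prove.
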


With our notation, the latter can be written as  $(\oplus A_n)*_{\oplus C_n}(\oplus B_n)\simeq \oplus_n(A_n*_{C_n}B_n)$, which
most certainly implies that $(\oplus A_n)*_{\oplus C_n}(\oplus B_n)$ is not primitive (provided there is
 more than one $C_n$). Coming back to 
$A_1*_DA_2$, for abelian $D$ with $\dim(D)\geq 2$,  if , for instance,  we could order the direct summands of $D$ in such a way that, the
matrices $\mu_1$ and $\mu_2$ are direct sum of smaller matrices, then we could apply  Pedersen's  result to conclude
$A_1*_DA_2$ is not primitive. 

For the finite dimensional case, we can use the Bratteli diagrams of the inclusions $\gamma_s$, to get
a feeling of what is happening in this type of situation. We draw the Bratteli diagrams with the following convention:
we draw points aligned in three horizontal lines, the top one correspond to the direct summands of $A_1$,
the middle one those of $D$ and the bottom line the ones coming from $A_2$.  For instance the diagram
\[
\xymatrix{
 &  \bullet  &   & \bullet \\
 \bullet \ar[ur]^1 \ar[dr]_1 & & \bullet \ar[ul]_1 \ar[dl]^1 &  \bullet \ar[u]_3 \ar[d]^3 &  \\  
 &  \bullet  &   & \bullet 
 }
\]
corresponds to $(M_2\oplus M_3)*_{\mathbb{C}^3}(M_2 \oplus M_3)$ with inclusions
\[
\gamma_1(x_1,x_2,x_3)=\gamma_2(x_1,x_2,x_2)=\left[
\begin{array}{cc}
x_1 & 0 \\ 0 & x_2
\end{array}
\right] \oplus \left[  \begin{array}{ccc}
x_3 & 0 & 0 \\
 0 & x_3 & 0 \\
0 & 0 & x_3 \\
\end{array}\right]
\]
and
\[
\xymatrix{
 &  \bullet &   \\
\bullet \ar[ur]^1 \ar[dr]_1 &  & \bullet \ar[ul]_1 \ar[dl]^1 \\
 &  \bullet &   
 }
\]
correspond to $M_2*_{\mathbb{C}^2}M_2$ with inclusions
 
 \[
 \gamma_1(x_1,x_2)=\gamma_2(x_1,x_2)=\left[ \begin{array}{cc}
x_1 & 0 \\ 0  & x_2
\end{array} \right]
\]

Notice that in the first case we can apply Proposition \ref{Prop:Pedersen} while in the second we can't. Thus
we want to avoid cases like the first one. If, for a moment, we forget about the direction of the arrows in the Bratteli diagrams, what
is happening in the second example is that we can find a path that joints all the points of the middle line.  If we can't, then  we can apply Proposition \ref{Prop:Pedersen} and obtain non primitive C*-algebras.  Thus this is a necessary condition for primitivity. Taking into account the graphic representation we call this the Linking Path condition (LP condition for short).  Also 
notice that the LP condition is trivially full fill if the dimension of the base $D$ is one, that is why in the unital full free products
studied in ~\cite{Dykema&Torres},  this condition did not show. The formal definition is given below. 

\begin{defin}\label{Def:LPcondition}
Consider inclusions $\gamma_s:D \to A_s$, where $D$ is abelian with dimension $l_0 \geq 2$
and $l_s$ is the dimension of the center of $A_s$, $s=1,2$. We say that $A_1*_DA_2$ satisfies the LP condition if  there is  a function $c:[l_0- 1] \to [l_1]\times \{1\} \cup [l_2] \times \{2\}$ such that, for
$1\leq j  \leq l_0-1$,
\[
\mu_{c(j)[2]}(c(j)[1],j)\not=0 \not= \mu_{c(j)[2]}(c(j)[1],j+1).
\]
Here $c(j)[i]$ means the $i$-th coordinate of $c(j)$.
\end{defin}

The LP condition is a necessary but not a sufficient condition for primitivity, as the next example shows.

\begin{prop}\label{Prop:ExmapleM2M2}
Let  $\gamma_1=\gamma_2:\mathbb{C}^2 \to M_2$ denote the unital inclusions
\[
\gamma_s(x_1,x_2)=
\bigg[
\begin{array}{cc}
x_1 & 0 \\
0 & x_2 
\end{array}
\bigg].
\]

With this inclusion we have $M_2*_{\mathbb{C}^2}M_2\simeq M_2(C(\mathbb{T}))$.

\end{prop}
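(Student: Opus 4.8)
The plan is to realize $A := M_2*_{\mathbb{C}^2}M_2$ through a universal description in terms of symmetries, reduce to a corner, and identify that corner with $C(\mathbb{T})$. First I would record a presentation of $A$. Writing $d=2p-1$, where $p=e_{11}$ is the minimal projection of $D=\mathbb{C}^2$, the subalgebra $D$ equals $C^*(d)$ with $d$ a symmetry, and each copy of $M_2$ is generated over $D$ by a symmetry $t_r$ ($r=1,2$) that anticommutes with $d$ (two anticommuting symmetries generate $M_2$). By the universal property of the amalgamated full free product, $A$ is therefore the universal unital C*-algebra generated by symmetries $d,t_1,t_2$ subject only to $d\,t_r=-t_r\,d$ for $r=1,2$.

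Next I would pass to the unitary $w:=t_1t_2$. A direct computation gives $dw=wd$ (so $w$ commutes with $D$) and $t_1wt_1=w^*$, and $A$ is generated by $d,t_1,w$. Since $f_{11}:=p$, $f_{22}:=1-p$, $f_{12}:=e_{12}^{(1)}$, $f_{21}:=e_{21}^{(1)}$ form a system of $2\times 2$ matrix units with $f_{11}+f_{22}=1$, the standard matrix-unit isomorphism yields $A\simeq M_2(pAp)$. Because $w$ commutes with $p$, the element $W:=pwp$ is a unitary in the corner $pAp$, and the central claim is that $pAp=C^*(W)$. To prove this I would use the standard normal form of the amalgamated free product: $A$ is the closed linear span of $D$ together with alternating products of elements of $A_r^\circ=\ker E_r$, where $E_r:M_2\to\mathbb{C}^2$ is compression to the diagonal, so that $A_r^\circ$ consists of the off-diagonal matrices. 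Compressing such a word by $p$ on both sides forces, at each factor, only one of $e_{12}^{(r)},e_{21}^{(r)}$ to survive (since $e_{21}p=e_{21}$, $e_{12}p=0$, $pe_{12}=e_{12}$, $pe_{21}=0$); tracking the alternation of the algebras and of the matrix-unit types collapses every surviving compressed word to a scalar multiple of a power of $W=e_{12}^{(1)}e_{21}^{(2)}$ or of $W^*$. Hence $pAp$ is the closed span of $\{p\}\cup\{W^k,W^{*k}:k\ge 1\}=C^*(W)$.

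Finally, to pin down $C^*(W)$, I would exhibit an explicit representation. Letting $z$ be the identity function on $\mathbb{T}$ and setting $d=\diag(1,-1)$, $t_1=e_{12}+e_{21}$ (the flip), and $w=\diag(z,\bar z)$ in $M_2(C(\mathbb{T}))$, one checks that these satisfy the three relations above, so the universal property produces a unital $*$-homomorphism $\phi:A\to M_2(C(\mathbb{T}))$. Since $\phi$ carries the $f_{ij}$ onto the standard matrix units of $M_2$ and carries $w$ to $\diag(z,\bar z)$, its image contains both $M_2$ and $zI$, hence all of $M_2(C(\mathbb{T}))$, so $\phi$ is surjective. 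Moreover $\phi(W)=z$ inside the corner $e_{11}M_2(C(\mathbb{T}))e_{11}\cong C(\mathbb{T})$, and its spectrum is all of $\mathbb{T}$; as spectra can only shrink under a $*$-homomorphism, $\sigma_A(W)=\mathbb{T}$. Because $W$ is a unitary, the continuous functional calculus gives $C^*(W)\simeq C(\sigma_A(W))=C(\mathbb{T})$, and therefore $A\simeq M_2(pAp)\simeq M_2(C(\mathbb{T}))$.

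The main obstacle is the corner computation $pAp=C^*(W)$. It rests on the normal-form/reduced-word description together with the bookkeeping showing that compression by $p$ annihilates every alternating word except the powers of $W$ and $W^*$, and on invoking the surjection $\phi$ to guarantee that the spectrum of $W$ is genuinely all of $\mathbb{T}$ rather than a proper closed subset (which would collapse $C(\mathbb{T})$ to a smaller, possibly finite-dimensional, quotient).
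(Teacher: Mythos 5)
Your proof is correct, and it follows the same broad strategy as the paper (reduce to a $2\times 2$ matrix decomposition over a corner, then identify the corner as the C*-algebra of a single unitary), but the two key steps are carried out by genuinely different means. The paper conjugates $A$ onto $M_2(B)$, where $B$ is the relative commutant of $\iota_1(M_2)$, and then tracks generators: it computes $\varphi(\iota_1(u))$, $\varphi(\iota_1(v))$, uses the amalgamation to identify $\varphi(\iota_2(v))$ with $\varphi(\iota_1(v))$, and shows by direct matrix-entry computation that $\varphi(\iota_2(u))$ is off-diagonal with a unitary entry $z$, so that $B=C^*(z)$. You instead compute the corner $pAp$ directly via the normal form of the algebraic amalgamated free product, showing by word combinatorics that compression by $p$ annihilates every alternating word except scalar multiples of powers of $W=e_{12}^{(1)}e_{21}^{(2)}$ and its adjoint; this is a heavier tool but makes the structure of the corner transparent without any matrix bookkeeping. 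More significantly, your final step is an improvement in completeness: the paper stops at ``$z$ is a unitary generating $B$,'' leaving implicit why $C^*(z)\simeq C(\mathbb{T})$ rather than $C(X)$ for a proper closed subset $X\subseteq\mathbb{T}$ (equivalently, why $z$ has full spectrum). Your presentation of $A$ by anticommuting symmetries, together with the explicit representation $\phi$ into $M_2(C(\mathbb{T}))$ sending $W$ to the identity function $z$ and the spectral permanence argument $\sigma(\phi(W))\subseteq\sigma(W)\subseteq\mathbb{T}$, pins this down honestly. Minor point to make explicit when writing this up: to invoke the universal property you should verify that $t_2:=t_1w$ is a symmetry anticommuting with $d$ in $M_2(C(\mathbb{T}))$ (a one-line check with your choices $t_1$ the flip and $w=\diag(z,\bar z)$), since the universal property of $A$ is phrased in terms of the pair of inclusions of $M_2$, i.e., of the triple $(d,t_1,t_2)$, not of $(d,t_1,w)$.
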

\begin{proof}

For convenience let $A=M_2*_{\mathbb{C}^2}M_2$ and for $i=1,2$, 
let $\iota_i:M_2 \to A$ denote the canonical
inclusion. Since $A$ contains a copy of $M_2$, we recall  that $A$ is isomorphic to $M_2(B)$, where
\[
B=\{a \in A : a\iota_1(x)=\iota_1(x)a,\quad  \textrm{for all $x\in M_2$ }  \}
\]
and an explicit isomorphism is given by
\[
\varphi(a)=[a(i,j)]_{1\leq i, j \leq 2}
\]
and  $a(i,j)=\sum_{r=1}^2\iota_1(E_{r,i})a\iota_1(E_{j,r})$.

Now, $M_2$ is generated, as algebra, by
\[
\bigg[
\begin{array}{cc}
1 & 0 \\
0 & 1
\end{array}
\bigg],
\quad
v=\bigg[
\begin{array}{cc}
1 & 0 \\
0 & -1
\end{array}
\bigg]
\quad 
\textrm{and}
\quad
u=\bigg[
\begin{array}{cc}
0 & 1 \\
1 & 0
\end{array}
\bigg].
\]

Thus $M_2(B)$ is generated as C*-algebra by $\varphi(\iota_i(u)),\varphi(\iota_i(v))$, $i=1,2$. But
a direct computation shows that $\varphi(\iota_1(u))=u$ and $\varphi(\iota_1(v))=v$. Taking into account
the amalgamation over $\mathbb{C}^2$ i.e. $\iota_1(\iota(1,-1))=\iota_2(\iota(1,-1))$,  
we get $\varphi(\iota_2(v))=\varphi(\iota_1(v))=v$. Hence $M_2(B)$ is generated by 
$1,u,v,\varphi(\iota_2(u))$. 

Next we prove
\[
\varphi(\iota_2(u))=
\bigg[
\begin{array}{cc}
0 & z \\
z^* & 0
\end{array}
\bigg]
\]
where $z\in B$ is a unitary. Indeed, the $(1,1)$ entry of $\varphi(\iota_2(u))$ is given by
$E_{1,1}\varphi(\iota_2(u))E_{1,1}$. But $E_{1,1}=\varphi(\iota_1(E_{1,1}))=\varphi(\iota_2(E_{1,1}))$, thus
$E_{1,1}\varphi(\iota_2(u))E_{1,1}=\varphi(\iota_2(E_{1,1}uE_{1,1}))=0$.
Similarly, the $(2,2)$ entry of $\varphi(\iota_2(u))$ is zero. Regarding  $(1,2)$ and $(2,1)$, a direct 
computation shows
\begin{eqnarray*}
\varphi(\iota_2(u))(1,2)&=&\iota_1(E_{1,2})\iota_2(E_{2,1})+\iota_2(E_{2,1})\iota_1(E_{1,2})=:z,\\
\varphi(\iota_2(u))(2,1)&=&\iota_2(E_{1,2})\iota_1(E_{2,1})+\iota_1(E_{2,1})\iota_2(E_{1,2})=z^*.
\end{eqnarray*}

Lastly, since $\varphi(\iota_2(u)^2)=1$ we conclude $zz^*=z^*z=1$.
\end{proof}

\subsection{Big multiplicities}

In this section we will assume that $A_1, A_2$ and $D$ are finite dimensional C*-algebras. Recall that, for $s=1,2$, 
$\gamma_s:D \to A_s$ denote a unital embedding and that $\mu_s$ denote its matrix of partial multiplicities and
$l_s$ denote the dimension of the center of $A_s$. Also, $l_0$ denotes the dimension of the center of $D$.

In this section we will prove that if we identify a large amount of portions of $D$, in $A_1$ and $A_2$, then, under the
LP condition, $A_1*_DA_2$ is primitive. To be more specific, here a large amount means bigger than 2 (see Theorem \ref{CoroPrimitivityBiggerThan2}).

\begin{lema}\label{Lemma:RFDMatricesMultiplicities}
Assume $A_1,A_2$ and $D$ are finite dimensional C*-algebras. 
$A_1*_DA_2$ is RFD if and only if, for $s=1,2$, there are column vectors $p_s\in \mathbb{Z}_+^{l_s}$ such that
$\mu_1^tp_1=\mu_2^tp_2$.
\end{lema}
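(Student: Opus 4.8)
The plan is to prove the two implications separately; for ``matrices $\Rightarrow$ RFD'' I will invoke the RFD criterion recalled above, while ``RFD $\Rightarrow$ matrices'' needs only the definition of residual finite dimensionality together with the representation theory of finite dimensional algebras.

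\emph{From the matrices to RFD.} Suppose there are $p_s\in\mathbb{Z}_+^{l_s}$, with strictly positive entries, such that $\mu_1^tp_1=\mu_2^tp_2$. Using $A_s\cong\oplus_{i=1}^{l_s}M_{n_s(i)}$, I would define a trace on $A_s$ by giving the $i$-th summand the weight $p_s(i)$, that is $\tau_s(a)=\sum_i p_s(i)\Tr(a_i)$; since every $p_s(i)$ is positive, $\tau_s$ is faithful. The next step is to compute $\tau_s\circ\gamma_s$ on the center of $D$: for a minimal projection $f$ in the $k$-th summand of $D$ the element $\gamma_s(f)$ has rank $\mu_s(i,k)$ in the $i$-th summand of $A_s$, so $\tau_s(\gamma_s(f))=\sum_i p_s(i)\mu_s(i,k)=(\mu_s^tp_s)_k$. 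As a trace on the finite dimensional algebra $D$ is determined by its values on such minimal projections, the hypothesis $\mu_1^tp_1=\mu_2^tp_2$ is exactly the statement $\tau_1\circ\gamma_1=\tau_2\circ\gamma_2$. Because $\gamma_s$ is unital, $\tau_1$ and $\tau_2$ have the same total mass $\tau_s(1)$, so after rescaling by this common number they become faithful tracial states on $A_1$ and $A_2$ that agree on $D$; in particular they are faithful states agreeing on $D$, and the RFD criterion recalled above yields that $A_1*_DA_2$ is RFD.

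\emph{From RFD to the matrices.} Assume $A_1*_DA_2$ is RFD, so there is a separating family $(\pi_j)_j$ of finite dimensional $*$-representations. The canonical inclusions $\iota_s$ are injective (one may, for instance, match infinite amplifications $\sigma_s\otimes 1_{\ell^2}$ of faithful representations $\sigma_s$ of $A_s$, which agree on $D$ since every summand of $D$ then occurs with infinite multiplicity on both sides), hence $(\pi_j)_j$ restricts to a separating family of $\iota_s(A_s)\cong A_s$. Since the finite dimensional algebra $A_s$ has only finitely many ideals, finitely many of the $\pi_j$ already separate it; choosing a finite index set that works for both $s=1,2$ and summing the corresponding $\pi_j$ produces a single finite dimensional representation $\pi$ for which $\pi^{(s)}=\pi\circ\iota_s$ is faithful on $A_s$. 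A faithful representation of $\oplus_i M_{n_s(i)}$ is unitarily equivalent to $\oplus_i \id_{M_{n_s(i)}}^{\oplus p_s(i)}$ with all $p_s(i)\ge 1$, which defines $p_s\in\mathbb{Z}_+^{l_s}$. Finally I would read off the required identity from $\pi^{(0)}$: for a minimal projection $f$ in the $k$-th summand of $D$ the operator $\pi^{(0)}(f)=\pi^{(s)}(\gamma_s(f))$ has rank $\sum_i p_s(i)\mu_s(i,k)=(\mu_s^tp_s)_k$, and since this rank is intrinsic to $\pi^{(0)}$ and independent of $s$, the vectors $\mu_1^tp_1$ and $\mu_2^tp_2$ coincide.

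\emph{Main obstacle.} The only deep ingredient is the cited RFD criterion, used in the first implication to pass from compatible faithful traces to an actual separating family; the rest is finite dimensional bookkeeping. Two points deserve care. First, in the second implication one must secure a single finite dimensional representation faithful on $A_1$ and $A_2$ simultaneously, which is exactly where injectivity of $\iota_s$ and the finiteness of the ideal lattice of $A_s$ are used. Second, the dictionary between (faithful traces or faithful representations) and the matrices $\mu_s$ has to be set up through minimal projections of the summands of $D$, so that the argument remains correct even when $D$ is not abelian.
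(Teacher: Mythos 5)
Your proof is correct and follows essentially the same route as the paper: for the ``matrices $\Rightarrow$ RFD'' direction you build faithful traces on $A_s$ weighted by the entries of $p_s$, check agreement on $D$ via minimal projections, and invoke the Armstrong--Dykema--Exel--Li criterion, exactly as the paper does (your unnormalized traces rescale to the paper's convex combinations $\sum_i\alpha_s(i)\tr_{n_s(i)}$), and for the converse you extract a single finite dimensional representation with faithful restrictions $\pi^{(s)}$ and read off $\mu_s^tp_s$ from the rank of $\pi^{(0)}$ on minimal projections of $D$, which matches the paper's computation via $\Tr_{\mathbb{B}(H)}$. The only differences are cosmetic improvements: you work with general $D$ directly instead of first reducing to abelian $D$ by Lemma \ref{Lemma:CompresionSameMatrix}, and you justify in detail (injectivity of $\iota_s$ via matched infinite amplifications, plus finiteness of the ideal lattice of $A_s$) the existence of a representation faithful on both $A_1$ and $A_2$, a step the paper asserts without proof.
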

\begin{proof}
By Lemma \ref{Lemma:CompresionSameMatrix}, we can assume that $D$ is abelian. Let $l_0=\dim(D)$. 
Also assume that, for $s=1,2$, $A_s$ is $*$-isomorphic to $\oplus_{i=1}^{l_s}M_{n_s(i)}$.

Firstly assume $A_1*_DA_2$ is RFD. Then, there is a  unital finite dimensional $*$-representation
$\pi:A_1*A_2 \to \mathbb{B}(H)$ such that, for $s=1,2$, $\pi^{(s)}$ is unitarily equivalent to
$\oplus_{i=1}^{l_s}\id_{M_{n_s(i)}}^{(p_s(i))}$, for some positive integers $p_s(i)$.

Let $\{e_j\}_{j=1}^n$ denote a complete set of minimal projections of $D$. Since
$\pi\circ \gamma_1=\pi \circ \gamma_2$, it follows that there is a unitary $u$ in 
$\mathbb{B}(H)$ such that, for all $j=1,\dots, l_0$,
\begin{eqnarray*}
\Ad u\bigg( \oplus_{i=1}^{l_1}\id_{M_{n_1(i)}}^{p_1(i)}(\gamma_1(e_j))  \bigg)=\oplus_{i=1}^{l_2} 
\id_{M_{n_2(i)}}^{p_2(i)}(\gamma_2(e_j)).
\end{eqnarray*}

Taking $\Tr_{\mathbb{B}(H)}$ we get, 
\begin{equation*}\label{Eqn:EqualSumProjections}
\sum_{i=1}^{l_1}\mu_1(i,j)p_1(i)=\sum_{i=1}^{l_2}\mu_2(i,j)p_2(i).
\end{equation*}
In other words, if $p_s=(p_s(1),\dots, p_s(l_s))^t$, $\mu_1^tp_1=\mu_2^tp_2$.

Now suppose that, for $s=1,2$, there are column vectors $p_s\in \mathbb{Z}_+^{l_s}$ such that
$\mu_1^tp_1=\mu_2^tp_2$. Define for $i\in [l_s]$, $\alpha_s(i)=\frac{p_s(i)n_s(i)}{\sum_{k=1}^{l_s} p_s(k)n_s(k)}$.
Then it is straightforward to check that 
 $\tau_s:=\sum_{i=1}^{l_s} \alpha_s(i)\tr_{n_s(i)}$, define (modulo a unitary conjugation) faithful traces on $A_s$ such that,
 $ \tau_1\circ \gamma_1=\tau_2 \circ \gamma_2$.
 By the results in ~\cite{Dykema&Exel&Etal-EmbeddingsFullFreeProd}, it follows that $A_1*_DA_2$ is RFD.

\end{proof}

\begin{lema}\label{Lemma:MultiplicitiesRep}
Assume $A$ is a finite dimensional $C^*$-algebra, $D \subseteq A$ is a unital abelian  
$C^*$-subalgebra of $A$ and $\pi:A \to \mathbb{B}(H)$ is a finite dimensional, unital $*$-representation. Then, for $1\leq j  \leq \dim(C(\pi(D)') )$,
$1\leq i  \leq \dim(C(\pi(A)'))$,
\[
\mu(\pi(D)',\pi(A)')(j ,i )=\mu(A,D)(i,j),
\]
where  commutants are taken relative to $\mathbb{B}(H)$ and $C(\pi(D)')$ and $C(\pi(A)')$ denotes the
center of $\pi(D)'$ and $\pi(A)'$ respectively.

\end{lema}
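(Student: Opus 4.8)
The plan is to compute both inclusion matrices explicitly from the structure theory of finite–dimensional $*$-representations; the transpose in the statement will then appear automatically, because passing to commutants interchanges the two tensor factors that carry, respectively, the data of $\mu(A,D)$ and the multiplicities of $\pi$.

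First I would fix the structure of $\pi$. Writing $A \cong \oplus_{i=1}^{l_A} M_{n(i)}$ and using that every $*$-representation of a matrix algebra is a multiple of its identity representation, $\pi$ is unitarily equivalent to $a \mapsto \oplus_i (a_i \otimes 1_{m_i})$ on $H \cong \oplus_i \mathbb{C}^{n(i)} \otimes \mathbb{C}^{m_i}$, where $m_i \ge 0$ is the multiplicity of the $i$-th summand. Discarding the indices with $m_i = 0$ (which is exactly the content of the range $1 \le i \le \dim C(\pi(A)')$), one reads off
\[
\pi(A)' = \bigoplus_{i} \bigl( 1_{n(i)} \otimes M_{m_i} \bigr),
\]
so the $i$-th simple summand of $\pi(A)'$ is $\cong M_{m_i}$.

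Next I would describe $\pi(D)'$ from the spectral projections of $D$. Let $e_1, \dots, e_{l_0}$ be the minimal projections of the abelian algebra $D$, so that $\sum_j e_j = 1$ and $\pi(D)$ is the abelian algebra generated by the orthogonal projections $\pi(e_j)$. Putting $H_j = \pi(e_j)H$ gives $H = \oplus_j H_j$ and $\pi(D)' = \oplus_j \mathbb{B}(H_j)$, whose $j$-th simple summand is $\mathbb{B}(H_j)$. The heart of the argument is to decompose each $H_j$ compatibly with the tensor picture above. Writing $e_j^{(i)} = \pi_i(e_j)$ for the image of $e_j$ in the block $M_{n(i)}$ and $V_{i,j} = e_j^{(i)}\mathbb{C}^{n(i)}$ for its range, the relation $\sum_j e_j^{(i)} = 1_{n(i)}$ yields the orthogonal decomposition $\mathbb{C}^{n(i)} = \oplus_j V_{i,j}$, and therefore
\[
H_j = \bigoplus_{i} V_{i,j} \otimes \mathbb{C}^{m_i}, \qquad \dim V_{i,j} = \mu(A,D)(i,j).
\]
An element $\oplus_i (1_{n(i)} \otimes X_i)$ of $\pi(A)'$ acts on the block $V_{i,j} \otimes \mathbb{C}^{m_i}$ of $H_j$ as $1_{V_{i,j}} \otimes X_i$; hence inside the simple algebra $\mathbb{B}(H_j)$ the summand $M_{m_i}$ of $\pi(A)'$ is embedded as $1_{V_{i,j}} \otimes M_{m_i}$, i.e.\ with multiplicity $\dim V_{i,j} = \mu(A,D)(i,j)$. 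Reading this off as the $(j,i)$-entry of the inclusion matrix of $\pi(A)' \hookrightarrow \pi(D)'$ gives $\mu(\pi(D)',\pi(A)')(j,i) = \mu(A,D)(i,j)$, as claimed.

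I expect the only delicate point to be bookkeeping rather than analysis: matching the index ranges $1 \le j \le \dim C(\pi(D)')$ and $1 \le i \le \dim C(\pi(A)')$ with the summands that actually survive under $\pi$ (those with $\pi(e_j) \ne 0$ and $m_i \ne 0$), and verifying that the orthogonal decompositions $\mathbb{C}^{n(i)} = \oplus_j V_{i,j}$ and $H = \oplus_j H_j$ are mutually compatible so that the tensor factorization of $H_j$ is legitimate. Since everything is finite dimensional there is no genuine obstacle; the whole point is simply that taking commutants swaps the factor $V_{i,j}$ (which records $\mu(A,D)$) with the multiplicity factor $\mathbb{C}^{m_i}$, and this swap is precisely what produces the transpose.
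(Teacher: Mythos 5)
Your proposal is correct and follows essentially the same route as the paper: both diagonalize $\pi$ via the structure theory of finite dimensional $*$-representations and read off $\mu(\pi(D)',\pi(A)')(j,i)$ as the rank, inside the $j$-th summand of $\pi(D)'$, of a minimal projection from the $i$-th summand of $\pi(A)'$. The only difference is presentational: you organize the computation through the tensor decompositions $H_j=\oplus_i V_{i,j}\otimes\mathbb{C}^{m_i}$, whereas the paper carries out the same bookkeeping with explicit systems of matrix units $E^{(j)}_{r,s}$ and conjugating unitaries, and your handling of the vanishing summands ($m_i=0$, $\pi(e_j)=0$) correctly accounts for the stated index ranges.
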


\begin{proof}

For simplicity take $d_0=\dim(C(\pi(D)')), d=\dim(D), a_0=\dim(C(\pi(A)')), a=\dim(C(A))$. Notice that, in general, $d_0 \leq d$
and $a_0 \leq a$, with equalities  if $\pi$ is injective. Fix and order for the direct summands of $\pi(D)'$ and $\pi(A)'$ and
for this order let $\pi(D)'[j]$ and $\pi(A)'[i]$ denote the $j$-th and $i$-th direct summands, respectively. 

For $1\leq i \leq a_0$ and for $1\leq j \leq d_0$, take $\tilde{m}_{j,i}=\mu(\pi(D)',\pi(A)')(j,i)$. Similarly,  
for $1\leq i \leq a$, $1\leq j \leq d$, let $m_{i,j}=\mu(A,D)(i,j)$. Let $\rho_j$ denote the projection form $\pi(D)'$ onto $\pi(D)'[j]$ and take $p_i$ a minimal projection on $\pi(A)'[i]$. Then, by definition,
$\tilde{m}_{j,i}=\rank(\rho_j(p_i))$. 

Assume $A$ is  $*$-isomorphic to $\oplus_{i=1}^a M_{n_i}$. We know there are unitaries $u$ in $\mathbb{B}(H)$,  $v$ in $A$ and non-negative
integers $p_i$ (some of which may be zero), such that
\[
\pi=\Ad u \circ \bigg( \oplus_{i=1}^{a}\id_{M_{n_i}}^{(p_i)}\bigg), \pi\circ \iota = \Ad uv \circ \bigg(  \oplus_{i=1}^a \oplus_{j=1}^d \id_{\mathbb{C}}^{(m_{i,j})}\bigg).
\]

It follows that
\[
\pi(D)'=\Ad uv \bigg(   C^*( E^{(i)}_{r,s}:  1\leq j \leq d_0, 1\leq r, s \leq q_j  )  \bigg),
\]
where $q_j=\sum_{i=1}^{a}m_{i,j}p_i $, $\{E_{r,s}^{(j)}\}_{1\leq r,s \leq q_j}$ is a system of matrix units in $M_{q_j}$
and for $j_1\not= j_2$, $E^{(j_1)}_{r_1,s_1}E_{r_2,s_2}^{(j_2)}=0$. Hence, for $1\leq j_0 \leq d_0$,
\[
\rho_{j_0}\bigg( \Ad uv\bigg(   \oplus_{j=1}^{d_0} \oplus_{1\leq r,s \leq q_j} z_{r,s}^{(j)} E_{r,s}^{(j)}\bigg) \bigg)=
\Ad uv \bigg( \oplus_{1\leq r,s \leq q_j} z_{r,s}^{(j_0)} E_{r,s}^{(j_0)} \bigg), 
\]
where $z_{r,s}^{(j)}$ are complex numbers.

Notice that, with no loss of generality, we can take $p_i$ as $\Ad uv(  \oplus_{j=1}^{d} \oplus_{r=1}^{m_{i,j}}E_{r,r}^{(j)} )$. Hence
$\rho_j(p_i)=\Ad uv(   \oplus_{r=1}^{m_{i,j}} E_{r,r}^{(j)} )$ and in consequence $\tilde{m}_{j,i}=\rank(\rho_j(p_i))=m_{i,j}$.

\end{proof}

\begin{thm}\label{Thm:MultiplicitiesBiggerThan2}
Let  $A_1,A_2$ and $D$ denote finite dimensional C*-algebras.
 Assume that:
\begin{enumerate}[(i)]
\item for $s\in \{1,2\}$, there are column  vectors $p_s \in \mathbb{Z}_+^{l_s}$, such that $\mu_1 ^tp_1=\mu_2^tp_2$,
\item for $s\in\{1,2\}$,  $j\in [l_0]$ and for all $i\in [l_s]$, with $\mu_s(i,j)\not=0$, $2p_s(i) \leq \sum_{i=1}^{l_s} \mu_s(i,j)p_s(i)$,
\item there is  $j_*\in [l_0], s_*\in\{1,2\}$ and $i_*\in [l_{s_*}]$ such that $\mu_{s_*}(i_*,j_*)\geq 2$,
\item $A_1*_DA_2$ satisfies the LP condition.
\end{enumerate}
Then  $A_1*_DA_2$ is primitive.
\end{thm}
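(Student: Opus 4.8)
The plan is to combine the RFD criterion from Lemma \ref{Lemma:CompletionCriteria} with the density results of Section 3, exactly as in the strategy announced in the introduction. By Lemma \ref{Lemma:RFDMatricesMultiplicities}, hypothesis (i) guarantees that $A_1*_DA_2$ is RFD, so it suffices to verify the completion criterion: for every unital finite dimensional $*$-representation $\pi:A_1*_DA_2\to\mathbb{B}(H)$ we must produce a finite dimensional $\hat\pi$ so that $\pi\oplus\hat\pi$ is DPI. First I would reduce to abelian $D$ using Corollary \ref{Coro:ReductionTrivialCenter} and Lemma \ref{Lemma:CompresionSameMatrix}, which preserve both primitivity and the matrices of partial multiplicities, so all four hypotheses carry over to the compressed system.

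The heart of the argument is to show that a suitable representation is DPI. Given $\pi$, I would first enlarge it by a multiple of the RFD representation built in Lemma \ref{Lemma:RFDMatricesMultiplicities} from the vectors $p_1,p_2$ of hypothesis (i), arranging that the restriction $\pi^{(s)}$ has each simple summand of $A_s$ appearing with multiplicity proportional to $p_s(i)$, and on a space $M_N$ with $N$ large. The key translation is Lemma \ref{Lemma:MultiplicitiesRep}: with $B_s=\pi^{(s)}(A_s)'$ and $B_0=\pi^{(0)}(D)'$, the multiplicity data $\mu(B_0,B_s)$ is the transpose of $\mu(A_s,D)=\mu_s$. Consequently the dimensions of the direct summands of $B_s$ are governed precisely by the column sums $\sum_i\mu_s(i,j)p_s(i)$, and hypothesis (ii) says exactly that each such summand of $B_s$ has dimension at most half of the corresponding summand-dimension of $B_0$. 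After I pass to simple summands of $B_0$ (which amounts to fixing a block $j$ and working inside a matrix algebra $M_{m(j)}$), Theorem \ref{Theorem:dCLesssOrEqualThanN2} then applies: hypothesis (ii) gives the summand-size bound $N/2$, and hypothesis (iii)—a single genuine multiplicity $\geq 2$—is what forces the strict inequality $\dim\mathbb{U}(B_1)+\dim\mathbb{U}(B_2)<N^2$ rather than mere equality. This yields density of $\Delta(B_1,B_2;M_{m(j)})$ in each simple summand.

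The remaining issue, and the place where hypothesis (iv) enters, is that $B_0=\pi^{(0)}(D)'$ is \emph{not} simple when $\dim D\geq 2$: it is a direct sum over the blocks $j\in[l_0]$. Genericity of $u$ within each block $M_{m(j)}$ only makes $B_1\cap\Ad u(B_2)$ trivial on that block, so the global intersection could still contain central elements of $B_0$ coming from the block decomposition. The LP condition is precisely what rules this out: the linking function $c:[l_0-1]\to[l_1]\times\{1\}\cup[l_2]\times\{2\}$ provides, for consecutive blocks $j,j+1$, a simple summand of $A_1$ or $A_2$ that is nonzero over both, which is to say a summand of $B_1$ or $B_2$ that links the two blocks of $B_0$. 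I would argue that an element of $B_1\cap\Ad u(B_2)$ that is scalar on each block must then have equal scalars on linked blocks, and since the linking graph on $[l_0]$ is connected the scalar is forced to be global, hence in $\mathbb{C}1$. Combining this connectivity argument with the per-block density gives that the set of $u\in\mathbb{U}(B_0)$ realizing $B_1\cap\Ad u(B_2)=\mathbb{C}$ is dense, i.e. $\pi\oplus\hat\pi$ is DPI.

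I expect the main obstacle to be this last gluing step: making precise how per-summand genericity, proved one matrix block at a time via Theorem \ref{Theorem:dCLesssOrEqualThanN2}, assembles into genuine density in the \emph{non-simple} unitary group $\mathbb{U}(B_0)$, and verifying that the LP-connectivity of the blocks is exactly the combinatorial condition needed to collapse the center of $B_1\cap\Ad u(B_2)$ to the scalars. In particular one must check that Proposition \ref{Prop:DensityHardVersion}, stated for simple $B_0$, can be applied blockwise and that the $C$-counting bound $d(C,u)<\dim\mathbb{U}(B_0)$ survives when $C$ straddles several blocks of $B_0$; here I would lean on Propositions \ref{dCAverageOverSubalgebras} and \ref{Prop:IncreasingProperty} to reduce any such $C$ to the two-dimensional case handled in Lemma \ref{LastUpperBound}. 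Once density is established, Lemma \ref{Lemma:CompletionCriteria} delivers primitivity of $A_1*_DA_2$.
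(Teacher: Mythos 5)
Your overall architecture matches the paper's proof --- reduction to abelian $D$, RFD via Lemma \ref{Lemma:RFDMatricesMultiplicities}, enlarging $\pi$ so that $(\pi\oplus\hat\pi)^{(s)}(A_s)'\simeq\oplus_i M_{kp_s(i)}$, translating multiplicities through Lemma \ref{Lemma:MultiplicitiesRep}, and working block by block in $B_0=\oplus_j B_0[j]$ --- but there is a genuine gap at the central step. You claim that hypotheses (ii) and (iii) yield density of $\Delta(B_1[j],B_2[j];B_0[j])$ in \emph{every} block $j$, and then use the LP condition only as a connectivity device to force an element that is scalar on each block to be a global scalar. Hypothesis (iii), however, supplies a multiplicity $\mu_{s_*}(i_*,j_*)\geq 2$ in a \emph{single} column $j_*$; it is only at that block that the strict inequality $\dim\mathbb{U}(B_1[j])+\dim\mathbb{U}(B_2[j])<\dim\mathbb{U}(B_0[j])$ needed for Theorem \ref{Theorem:dCLesssOrEqualThanN2} can be verified. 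For a column $j\neq j_*$ whose nonzero entries are all equal to $1$, hypothesis (ii) permits the borderline configuration of two summands each of dimension $d_j/2$ on both sides, where the dimension count gives \emph{equality} $d_j^2/2+d_j^2/2=d_j^2$, so Theorem \ref{Theorem:dCLesssOrEqualThanN2} does not apply; worse, per-block genericity can genuinely fail there, as the situation is blockwise the one of Proposition \ref{Prop:ExmapleM2M2}, where $M_2*_{\mathbb{C}^2}M_2\simeq M_2(C(\mathbb{T}))$ is not primitive. A concrete instance satisfying (i)--(iv) with such a bad block is $\mu_1=\left[\begin{smallmatrix}1&1\\ 1&1\end{smallmatrix}\right]$, $\mu_2=\left[\begin{smallmatrix}2&1\\ 0&1\end{smallmatrix}\right]$ with $p_1(1)=p_1(2)=p_2(1)=p_2(2)$: block $j=2$ is borderline and your per-block step breaks down there.

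The paper's proof uses the LP condition for something strictly stronger than gluing scalars: a propagation (``domino'') argument. One first makes the single block $j_*$ generic; the LP condition provides a summand $M_{kp_{s_1}(i_1)}$ of $B_{s_1}$ that lives over both $j_*$ and a neighboring block $j_2$, and once $j_*$ is generic this summand's contribution to the intersection collapses to $\mathbb{C}$. Inside block $j_2$ that copy of $\mathbb{C}$ now appears with multiplicity $kp_{s_1}(i_1)\geq 2$ (here the choice $k\geq 2$ in the construction of $\hat\pi$ is essential), so the effective multiplicity data at $j_2$ now satisfies the hypotheses of Theorem \ref{Theorem:dCLesssOrEqualThanN2}, and one proceeds inductively along the linking path until all of $[l_0]$ is covered, concluding that $\Delta(B_1,B_2;B_0)$ is dense in $\mathbb{U}(B_0)$. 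So your conditional gluing logic (per-block genericity plus LP-connectivity implies global genericity) is sound, but the unconditional per-block density you rely on is false under the stated hypotheses, and repairing it requires exactly this sequential use of LP rather than the static one you propose.
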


\begin{proof}

By Corollary \ref{Coro:ReductionTrivialCenter}, we can assume $D$ is abelian and  together with  Lemmas  \ref{Lemma:CompresionSameMatrix}, \ref{Lemma:RFDMatricesMultiplicities} and assumption (1), $A_1*_DA_2$ is RFD. Thus, according to
Lemma \ref{Lemma:CompletionCriteria}, it suffices to show that given $\pi:A_1*_DA_2 \to \mathbb{B}(H)$, a unital
finite dimensional $*$-representation, we can find $\hat{\pi}:A_1*_DA_2 \to \mathbb{B}(\hat{H})$, another 
unital finite dimensional $*$-representation such that $\pi \oplus \hat{\pi}$ is DPI. 

Suppose that $\pi^{(s)}$ is unitarily equivalent to $\oplus_{i=1}^{l_s}\id_{M_{n_s(i)}}^{(q_s(i))}$. Write
$p_s=(p_s(1),\dots , p_s(l_s))^t$ and  take
a positive integer $k \geq 2$, such that $kp_s(i)  > q_s(i)$, for all $s\in \{1,2\}$ and all $i\in [l_s]$.  Since
$\mu_1^tp_1=\mu_2^tp_2$, there is a unital finite dimensional $*$-representation $\hat{\pi}:A_1*_DA_2 \to \mathbb{B}(\hat{H})$ such that $\hat{\pi}^{(s)}$ is unitarily equivalent to $\oplus_{i=1}^{l_s}\id_{M_{n_s(i)}}^{(kp_s(i)-q_s(i))}$. We will show that $\pi \oplus \hat{\pi}$ is DPI. 
 
Let $B_0=(\pi\oplus \hat{\pi})^{(0)}(D)'$ and for $s=1,2$, $ B_s=(\pi\oplus \hat{\pi})^{(s)}(A_s)'$. Notice that, for 
$s=1,2$, $B_s$ is $*$-isomorphic to  $\oplus_{i=1}^{l_s}M_{kp_s(i)}$ and by Lemma
\ref{Lemma:MultiplicitiesRep}, $\mu(B_0,B_s)^t=\mu_s$ . For $j\in [l_0]$ let
$$d_j:=\sum_{i=1}^{l_1}\mu_1(i,j)p_1(i)=\sum_{i=1}^{l_2}\mu_2(i,j)p_2(i).$$ 

Then $B_0$ is $*$-isomorphic to $\oplus_{j=1}^{l_0}M_{kd_j}$. Let $B_s[j]$ denote the projection from $B_s$ onto the
$j$-th direct summand of $B_0$. Then $B_s[j]$ is $*$-isomorphic to $\oplus_{i\in [l_s]: \mu_s(i,j)\not=0}M_{kp_s(i)}$.
From assumption (2), for all $j\in [l_0]$ and all $i\in [l_s]$ such that $\mu_s(i,j)\not=0$, $p_s(i)\leq d_j/2$. 

The next step is to show that 
\[
\Delta(B_1[j_*],B_2[j_*];B_0[j_*])=\{u\in \mathbb{U}(B_0[j_*]): B_1[j_*]\cap \Ad u (B_2[j_*])=\mathbb{C}\}
\]
is dense in $\mathbb{U}(B_0[j_*])$. By Theorem \ref{Theorem:dCLesssOrEqualThanN2} and assumption (2) it suffices
to prove that 
\begin{equation}\label{Eqn:IneqBigMultiplicities}
\sum_{1\leq i \leq l_1 : \mu_1(i,j_*)\not=0 }p_1(i)^2+\sum_{1\leq i \leq l_2 : \mu_2(i,j_*)\not=0 }p_2(i)^2< d_{j_*}^2.
\end{equation}
Using that  $\sum_{i=1}^{l_s}\mu_s(i,j_*)p_s(i)=d_{j_*}$, we can find  positive numbers 
$\{\beta_s(i)\}_{\{ i\in [l_s], \mu(i,j_*)\not=0\}}$, such that $\sum_{i}\beta_s(i)=1$ and $\mu_s(i,j_*)p_s(i)=\beta_s(i)d_{j_*}$. Also notice that assumption (2) implies that $\beta_s(i)/\mu_s(i,j_*)\leq 1/2$. Thus
\begin{eqnarray*}
\sum_{\substack{1\leq i \leq l_1  :\\ \mu_1(i,j_*)\not=0 }}p_1(i)^2+
\sum_{\substack{1\leq i \leq l_2 :\\  \mu_2(i,j_*)\not=0} }p_2(i)^2&=& d_{j_*}\bigg(\sum_{i}\frac{\beta_1(i)^2}{\mu_1(i,j_*)^2}+\sum_{i}\frac{\beta_2(i)^2}{\mu_2(i,j_*)^2}\bigg) \\
&\leq & \frac{d_{j_*}}{2}\bigg(\sum_{i}\frac{\beta_1(i)}{\mu_1(i,j_*)}+\sum_{i}\frac{\beta_2(i)}{\mu_2(i,j_*)}\bigg).
\end{eqnarray*}

Thus (\ref{Eqn:IneqBigMultiplicities}) holds from the assumption that $\mu_{s_*}(i_*,j_*)\geq 2$
and the fact that $\sum_i \beta_s(i)=1$.

Now, the LP condition will trigger a domino effect. Indeed, let $j_1=j_*$. By the LP condition there is $j_2 \in [l_0]$, $j_2\not=j_1$, and $p_{s_1}(i_1)$ such that the direct summand corresponding to $M_{kp_{s_1}(i_1)}$, in 
$B_{s_1}[j_1]$ also embeds in $B_0[j_2]$. If we take $u_1\in \Delta(B_1[j_1],B_2[j_1];B_0[j_1])$, and consider
$\Ad u (B_1)\cap B_2$, where $u=\prod_{j\not= j_1} id_{\mathbb{U}(B_0[j])} \times u_1$, the direct summand 
corresponding to $M_{p_{s_1}(i_1) }$ in  $B_0[j_2]$ becomes $\mathbb{C}$ so that its multiplicity in $B_0[j_1]$ grows and, since $k\geq 2$, we can apply the same reasoning again to deduce that $\Delta(B_1[j_2],B_2[j_2];B_0[j_2])$ is dense
in $\mathbb{U}(B_0[j_2])$. The LP condition guarantees that we cover all $j\in [l_0]$ so that at the end, $\Delta(B_1,B_2;B_0)$
is dense in $\mathbb{U}(B_0)$.

\end{proof}

An easy situation where all the conditions of Theorem \ref{Thm:MultiplicitiesBiggerThan2} are satisfied is  Theorem \ref{CoroPrimitivityBiggerThan2}.

\begin{proof}[Proof  of Theorem \ref{CoroPrimitivityBiggerThan2}]

Since there are faithful traces on $A_1$ and $A_2$ whose restrictions to $D$ agree, $A_1*_DA_2$ is RFD (see ~\cite{Dykema&Exel&Etal-EmbeddingsFullFreeProd} for a proof). Thus, by Lemma \ref{Lemma:RFDMatricesMultiplicities}
condition (i) in Theorem \ref{Thm:MultiplicitiesBiggerThan2} is met. The inequality in  condition (ii) of
Theorem \ref{CoroPrimitivityBiggerThan2}  is also satisfied because if
$\mu_s(i,j)\not=0$ then $\mu_s(i,j)\geq 2$. For the same reason condition (iii) is clear and since we are asking
$A_1*_DA_2$ to satisfy the LP condition this concludes the proof.

\end{proof}

\subsection{A  characterization for a class of examples}
As in the previous section,  $A_1,A_2$ and $D$
will denote finite dimensional C*-algebras. From the discussion in section 2,  for questions regarding  the
primitivity of $A_1*_DA_2$, we may assume $D$
is abelian. Recall that $l_0$ denote the dimension of $D$ and $l_1$,
$\{n_1(i)\}_{i=1}^{l_1}$ and $\mu_1$ denote, respectively, the dimension of the center of $A_1$,  the dimensions
of the direct summands of $A_1$, in some fixed order, and the matrix of partial multiplicities of the inclusion
$\gamma_1:D\to A_1$. Similarly for $A_2$. Finally we assume  $A_1*_DA_2$ is RFD.

In this section we characterize primitive C*-algebras of the form $A_1*_DA_2$, $A_1,A_2$ and $D$ finite dimensional such that
the ranks of $\mu_1$ and $\mu_2$ are both one.

\begin{rmk}
Since the rank of $\mu_s$ is one, and it is the matrix of partial multiplicities of a unital inclusion $\gamma_s:D\to A_s$, we must have that all its entries are non zero. Otherwise the inclusion would not be unital.
\end{rmk}

\begin{lema}\label{Lemma:RankOneLP}
 $A_1*_DA_2$ satisfies the LP condition.
\end{lema}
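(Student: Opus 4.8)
The plan is to read the conclusion off directly from the remark immediately preceding the lemma, which asserts that when $\rank(\mu_s)=1$ every entry of $\mu_s$ is nonzero. Granting this, the linking function demanded by Definition \ref{Def:LPcondition} can be taken to be constant, so the verification is essentially immediate.

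First I would recall what the LP condition actually requires: for each consecutive pair $(j,j+1)$ of central summands of $D$ (with $1\le j\le l_0-1$) one must exhibit a single summand of $A_1$ or of $A_2$ whose partial multiplicities into both the $j$-th and the $(j+1)$-th summand of $D$ are nonzero. Since all entries of $\mu_1$ are nonzero by the remark, the very first summand of $A_1$ already links every consecutive pair simultaneously. I would therefore set $c(j)=(1,1)$ for all $j\in[l_0-1]$, so that $c(j)[1]=1$ and $c(j)[2]=1$; the defining inequality $\mu_{c(j)[2]}(c(j)[1],j)\neq 0\neq \mu_{c(j)[2]}(c(j)[1],j+1)$ then reads $\mu_1(1,j)\neq 0\neq\mu_1(1,j+1)$, which holds verbatim.

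All the substantive content hides in the remark, so if I had to argue the lemma from scratch the real work would be there. I would factor the rank-one matrix as $\mu_s=uv^{t}$ with $u\in\mathbb{R}^{l_s}$ and $v\in\mathbb{R}^{l_0}$, so that $\mu_s(i,j)=u_iv_j$. Injectivity of $\gamma_s$ forces $\gamma_s(z_j)\neq 0$ for each minimal central projection $z_j$ of $D$, hence $v_j\neq 0$ for all $j$; unitality gives the row-sum identity $\sum_j\mu_s(i,j)=n_s(i)\ge 1$, that is $u_i\bigl(\sum_j v_j\bigr)\ge 1$, forcing $u_i\neq 0$ for all $i$. Consequently $\mu_s(i,j)=u_iv_j\neq 0$ for every $(i,j)$, which is precisely the content of the remark.

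I do not anticipate a genuine obstacle: once every entry of $\mu_1$ is known to be nonzero, the LP condition is satisfied by a constant linking path. The only point to keep in mind is the standing hypothesis $l_0\ge 2$ built into Definition \ref{Def:LPcondition}; when $l_0=1$ the condition is empty and there is nothing to check, as noted in the discussion preceding the definition.
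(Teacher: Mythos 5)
Your proof is correct and takes essentially the same route as the paper: both rely on the preceding remark that a rank-one matrix of partial multiplicities of a unital inclusion has all entries nonzero, and then satisfy Definition \ref{Def:LPcondition} with a constant linking function $c(j)=(i_0,1)$. Your additional factorization $\mu_s=uv^{t}$, using injectivity for $v_j\neq 0$ and the row-sum identity $u_i\bigl(\sum_j v_j\bigr)=n_s(i)\ge 1$ for $u_i\neq 0$, is a sound and slightly more detailed justification of the remark than the paper's one-line assertion.
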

\begin{proof}
The proof will only use the fact that either $\mu_1$ or $\mu_2$ have rank one. Assume that the rank of $\mu_1$ is one. Since $\gamma_1$ is a unital inclusion we must have that all the entries
of $\mu_1$ are not zero. Take $c:[l_0-1]\to [l_1]\times \{1\} \cup [l_2]\times \{2\}$ given by  $c(j)=i_0$,
where $i_0\in [l_1] $ is arbitrary. With this function $c$, it follows from Definition \ref{Def:LPcondition} that
$A_1*_DA_2$ satisfies the LP condition.
\end{proof}

\begin{lema}\label{Lemma:IndependentQuotient}
Then there are positive integers $q_1,q_2$
such that, 
\[
\mu_1^t(q_1 1_{l_1})=\mu_2^t(q_21_{l_2}).
\]
where $1_{l}$ is the column vector of dimension $l$ whose entries are all ones.
\end{lema}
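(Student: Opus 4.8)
The plan is to exploit the rank-one structure of $\mu_1$ and $\mu_2$ together with the RFD hypothesis, which is available through Lemma \ref{Lemma:RFDMatricesMultiplicities}. Write $C_s(j)=\sum_{i=1}^{l_s}\mu_s(i,j)$ for the $j$-th column sum of $\mu_s$. Since $\mu_s^t(q_s 1_{l_s})$ is the column vector whose $j$-th entry is $q_s C_s(j)$, the asserted identity is equivalent to $q_1 C_1(j)=q_2 C_2(j)$ for every $j\in[l_0]$. Thus it suffices to show that the ratio $C_1(j)/C_2(j)$ is independent of $j$ and then to clear denominators.

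First I would record that, because $\mu_s$ has rank one and (by the preceding Remark) all of its entries are strictly positive, its columns are mutually proportional: the quantity $t_s(j):=\mu_s(i,j)/\mu_s(i,1)$ does not depend on $i$, and $t_s(1)=1$. Summing over $i$ gives $C_s(j)=t_s(j)\,C_s(1)$, so the column-sum vector of $\mu_s$ is determined up to scale by the single profile $t_s$.

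Next I would feed in RFD. Since $A_1*_DA_2$ is RFD, Lemma \ref{Lemma:RFDMatricesMultiplicities} furnishes $p_s\in\mathbb{Z}_+^{l_s}$ with $\mu_1^t p_1=\mu_2^t p_2$; comparing $j$-th entries gives $\sum_i \mu_1(i,j)p_1(i)=\sum_i \mu_2(i,j)p_2(i)$ for all $j$. Using $\mu_s(i,j)=t_s(j)\mu_s(i,1)$, the left-hand side equals $t_1(j)P$ and the right-hand side $t_2(j)P$, where $P:=\sum_i \mu_s(i,1)p_s(i)$ is the common value at $j=1$, a positive integer. Cancelling $P$ yields $t_1(j)=t_2(j)$ for every $j$, and hence $C_1(j)/C_2(j)=C_1(1)/C_2(1)$ is constant in $j$.

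Finally I would set $q_1:=C_2(1)=\sum_{i=1}^{l_2}\mu_2(i,1)$ and $q_2:=C_1(1)=\sum_{i=1}^{l_1}\mu_1(i,1)$, both positive integers. The constancy of the ratio gives $q_1 C_1(j)=C_2(1)C_1(j)=C_1(1)C_2(j)=q_2 C_2(j)$ for all $j$, which is exactly $\mu_1^t(q_1 1_{l_1})=\mu_2^t(q_2 1_{l_2})$. The only real content lies in the middle step: the RFD condition is precisely what forces the two column-proportionality profiles $t_1$ and $t_2$ to coincide, and without it the column sums of $\mu_1$ and $\mu_2$ need not be proportional at all, so this is where I expect the argument to hinge.
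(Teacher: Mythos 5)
Your proof is correct and is essentially the paper's own argument unpacked in coordinates: the paper observes that rank one makes the image of $\mu_s^t$ the ray spanned by $\mu_s^t(1_{l_s})$, uses the RFD vectors $p_s$ with $\mu_1^tp_1=\mu_2^tp_2$ to force these two rays to coincide with a positive rational ratio, and clears denominators, which is exactly what your column profiles $t_s(j)$ and the cancellation of $P$ accomplish. The only difference is cosmetic but pleasant: you produce explicit multipliers $q_1=\sum_{i=1}^{l_2}\mu_2(i,1)$ and $q_2=\sum_{i=1}^{l_1}\mu_1(i,1)$, whereas the paper leaves the integers implicit.
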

\begin{proof}
We may think $\mu_s^t$ as  a linear transformation from $\mathbb{R}^{l_1}$ to $\mathbb{R}^{l_0}$. Our assumption
on the ranks of $\mu_1$ implies that the image of $\mu_1^t$ coincides with the linear span of $\mu_1^t(1_{l_1})$
and similarly with $\mu_2$. On the other hand, since $A_1*_DA_2$ is RFD, there are $x\in \mathbb{Z}_+^{l_1}$ and 
$y\in \mathbb{Z}_{+}^{l_2}$ such that $\mu_!^t x = \mu_2^t y$. Thus there are rational numbers $r_i$, such that
$r_1 \mu_1^t(1_{l_1})=r_2\mu_2^t(l_{l_2}) $. Finally we multiply the last equality by the product of the denominators of
$r_1$ and $r_2$ to obtain $q_1$ and $q_2$.

\end{proof}

\begin{lema}\label{Lemma:UniformRep}
Given $\pi:A_1*_DA_2 \to \mathbb{B}(H)$, a unital finite dimensional $*$-representation, such that
$\pi^{(1)}$ and $\pi^{(2)}$ are injective, there are 
two positive integers $p_1$ and $p_2$  and $\hat{\pi}:A_1*_DA_2 \to \mathbb{B}(\hat{H})$, a unital finite dimensional $*$-representation, such that, $(\pi \oplus \hat{\pi})^{(s)}$ is
unitarily equivalent to $\oplus_{i=1}^{l_s}M_{n_s(i)}^{(p_s)}$, $s=1,2$. 
\end{lema}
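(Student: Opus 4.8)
The plan is to read off the multiplicity data of $\pi$, pad it up to a constant multiplicity with the help of Lemma \ref{Lemma:IndependentQuotient}, and then realize the padding as a genuine representation of the amalgamated product via the universal property of the full free product. Since we have reduced to $D$ abelian, write its minimal projections as $e_1,\dots,e_{l_0}$. First I would use that $\pi^{(s)}$ is a finite dimensional injective representation of $A_s\cong\oplus_{i=1}^{l_s}M_{n_s(i)}$, so it is unitarily equivalent to $\oplus_{i=1}^{l_s}\id_{M_{n_s(i)}}^{(q_s(i))}$ for integers $q_s(i)\geq 1$; injectivity is exactly what forces every $q_s(i)$ to be positive. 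Writing $q_s=(q_s(1),\dots,q_s(l_s))^t$, the fact that $\pi$ is a representation of $A_1*_DA_2$, so that $\pi^{(1)}\circ\gamma_1=\pi^{(0)}=\pi^{(2)}\circ\gamma_2$ on $D$, forces the compatibility $\mu_1^tq_1=\mu_2^tq_2$ by the computation carried out in the forward direction of Lemma \ref{Lemma:RFDMatricesMultiplicities}: both sides equal the rank of $\pi^{(0)}(e_j)$ on each $e_j$.

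Next I would fix the target multiplicity. By Lemma \ref{Lemma:IndependentQuotient} there are positive integers $a_1,a_2$ with $\mu_1^t(a_11_{l_1})=\mu_2^t(a_21_{l_2})$. I would pick an integer $m$ large enough that $p_s:=ma_s\geq\max_i q_s(i)$ for $s=1,2$ simultaneously, and set $r_s(i):=p_s-q_s(i)\geq 0$ and $r_s=(r_s(1),\dots,r_s(l_s))^t$. Subtracting the two compatibilities then yields
\[
\mu_1^t r_1=\mu_1^t(p_11_{l_1})-\mu_1^tq_1=\mu_2^t(p_21_{l_2})-\mu_2^tq_2=\mu_2^t r_2,
\]
using $\mu_s^t(p_s1_{l_s})=m\,\mu_s^t(a_s1_{l_s})$ together with $\mu_1^tq_1=\mu_2^tq_2$. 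The integers $p_1,p_2$ are the ones promised by the statement.

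The main step, and the one demanding care, is to promote the numerical data $(r_1,r_2)$ to an honest unital representation $\hat\pi$ of $A_1*_DA_2$. I would set $\rho_s:=\oplus_{i=1}^{l_s}\id_{M_{n_s(i)}}^{(r_s(i))}$, a unital representation of $A_s$. Both $\rho_1\circ\gamma_1$ and $\rho_2\circ\gamma_2$ are representations of the abelian algebra $D$, and for each minimal projection $e_j$ the rank of the image is $(\mu_s^tr_s)(j)$; the equality $\mu_1^tr_1=\mu_2^tr_2$ therefore says these two $D$-representations act on spaces of the same dimension and assign equal ranks to every $e_j$, hence are unitarily equivalent. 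Conjugating $\rho_2$ by a unitary $w$ implementing this equivalence, the pair $(\rho_1,\Ad w\circ\rho_2)$ consists of unital representations of $A_1,A_2$ that agree on $D$, so the universal property of the unital amalgamated full free product produces a unital $\hat\pi:A_1*_DA_2\to\mathbb{B}(\hat H)$ with $\hat\pi^{(s)}\cong\rho_s$. Then $(\pi\oplus\hat\pi)^{(s)}\cong\oplus_i\id_{M_{n_s(i)}}^{(q_s(i)+r_s(i))}=\oplus_i\id_{M_{n_s(i)}}^{(p_s)}$, which is the assertion. I expect the only genuine obstacle to be this realization step: the arithmetic of multiplicities is immediate, but one must verify that the chosen multiplicity vectors really correspond to $D$-unitarily-equivalent restrictions, so that a representation of the free product with these restrictions exists at all.
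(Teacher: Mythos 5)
Your proposal is correct and takes essentially the same route as the paper's proof: decompose $\pi^{(s)}$ into multiplicities $q_s(i)$ (positive by injectivity), pad to a uniform multiplicity $p_s$ that is a common multiple of the $a_s$ from Lemma \ref{Lemma:IndependentQuotient}, verify $\mu_1^tr_1=\mu_2^tr_2$, and realize $\hat\pi$ as a free product representation via the universal property---indeed your use of a single scaling factor $m$ for both $s=1,2$, and your explicit unitary $w$ making $\rho_1\circ\gamma_1$ and $\Ad w\circ\rho_2\circ\gamma_2$ agree on the nose, tidy up two points the paper's proof leaves implicit. One microscopic improvement: choose $m$ so that $p_s>\max_i q_s(i)$ strictly (as the paper does), which rules out the degenerate possibility $\hat H=\{0\}$ in the edge case where $\pi$ already has uniform multiplicities and all $r_s(i)=0$.
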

\begin{proof}

For convenience $s$ will denote either $1$ or $2$. There are integers $Q_s(1),\dots , Q_s(l_s)$ such that $\pi^{(s)}$ is unitarily equivalent to $\oplus_{i=1}^{l_s}M_{n_s(i)}^{(Q_s(i))}$ and since $\pi^{(s)}$ is injective all the integers $Q_s(i)$ are positive. Take $q_s$  as in Lemma \ref{Lemma:IndependentQuotient}. Take  a positive integer $k_s$, such that 
$\max_{1\leq i \leq l_s}\{Q_s(i) \} <  k_sq_s$. Now consider the unital finite dimensional $*$-representation of $A_s$ given by $\rho_s=\oplus_{i=1}^{l_s}M_{n_s(i)}^{(k_sq_s -Q_s(i))}$. We would like to take the free product
$*$-representation $\rho_1*\rho_2$ but to do so we need to check that they agree on $D$. It is easy to check that the latter is equivalent to 
$$\mu_1^t(k_1p_1-Q_1(1),\dots, k_1p_1-Q_1(l_1))^t=\mu_2^t(k_2p_2-Q_2(1),\dots, k_2p_2-Q_2(l_2))^t,$$
  which
is certainly true by Lemma \ref{Lemma:IndependentQuotient} and the fact that $\pi^{(1)}$ and $\pi^{(2)}$ agree on $D$. Lastly
$\hat{\pi}=\rho_1*\rho_2$ and $p_s=k_sq_s$ satisfy the requirements of the Lemma.

\end{proof}

\begin{prop}\label{Prop:PrimitivityRankOneEasyCase}
Assume 
\begin{enumerate}[(i)]
\item there are faithful traces on $A_1$ and $A_2$ whose restrictions to $D$ agree,
\item the ranks of $\mu_1$ and $\mu_2$ are one,
\item there is $j_0\in[l_0]$ such that
\[
\frac{l_1}{\bigg( \sum_{i=1}^{l_1}\mu_1(i,j_0)\bigg)^2 }+\frac{l_1}{\bigg( \sum_{i=1}^{l_2}\mu_2(i,j_0)\bigg)^2 }<1.
\]
\end{enumerate}
Then $A_1*_DA_2$ is primitive. 
\end{prop}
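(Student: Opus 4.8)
The plan is to verify the hypothesis of Lemma \ref{Lemma:CompletionCriteria} and, for this, to reuse the mechanism of Theorem \ref{Thm:MultiplicitiesBiggerThan2} but concentrated on a single direct summand. First I would reduce to abelian $D$ via Corollary \ref{Coro:ReductionTrivialCenter}; by Lemma \ref{Lemma:CompresionSameMatrix} the matrices $\mu_s$ and the integers $l_s$ are unchanged by the compression, so hypotheses (ii) and (iii) survive, while hypothesis (i) together with \cite{Dykema&Exel&Etal-EmbeddingsFullFreeProd} (equivalently Lemma \ref{Lemma:RFDMatricesMultiplicities}) makes $A_1*_DA_2$ RFD. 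By Lemma \ref{Lemma:CompletionCriteria} it then suffices to produce, for every unital finite dimensional $\pi$, a companion $\hat\pi$ with $\pi\oplus\hat\pi$ DPI.

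Given such a $\pi$, I would first add a faithful finite dimensional representation (available by RFD, with injective restrictions to $A_1$ and $A_2$ since the canonical maps $\iota_s$ are injective) so that I may assume $\pi^{(1)},\pi^{(2)}$ are injective; absorbing this summand into the eventual $\hat\pi$ is harmless. Lemma \ref{Lemma:UniformRep} (which rests on Lemma \ref{Lemma:IndependentQuotient}, where rank one is used) then supplies $\hat\pi$ and positive integers $p_1,p_2$ with $\Pi^{(s)}:=(\pi\oplus\hat\pi)^{(s)}\cong\oplus_{i}M_{n_s(i)}^{(p_s)}$. Writing $B_0=\Pi^{(0)}(D)'$ and $B_s=\Pi^{(s)}(A_s)'$, Lemma \ref{Lemma:MultiplicitiesRep} gives $\mu(B_0,B_s)^t=\mu_s$, so $B_s\cong\oplus_{i=1}^{l_s}M_{p_s}$ and $B_0\cong\oplus_{j=1}^{l_0}M_{N_j}$ with $N_j=p_s\sum_{i}\mu_s(i,j)$ (the two values of $s$ agreeing by the defining relation of $p_1,p_2$).

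The heart of the argument is to apply Theorem \ref{Theorem:dCLesssOrEqualThanN2} inside the single block $B_0[j_0]=M_{N_{j_0}}$ to $B_1[j_0]$ and $B_2[j_0]$. Since the multiplicities are uniform and all entries $\mu_s(i,j_0)$ are nonzero (rank one), each $B_s[j_0]\cong\oplus_{i=1}^{l_s}M_{p_s}$, whence $\dim\mathbb{U}(B_s[j_0])=l_sp_s^2$. Setting $S_s=\sum_i\mu_s(i,j_0)$ we have $N_{j_0}=p_sS_s$, so the requirement $\dim\mathbb{U}(B_1[j_0])+\dim\mathbb{U}(B_2[j_0])<N_{j_0}^2$ reads exactly $l_1/S_1^2+l_2/S_2^2<1$, which is hypothesis (iii). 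The auxiliary hypothesis of Theorem \ref{Theorem:dCLesssOrEqualThanN2}, that each summand of $B_s[j_0]$ have size at most $N_{j_0}/2$, amounts to $p_s\le N_{j_0}/2$, i.e. $S_s\ge2$; and (iii) forces $l_s/S_s^2<1$, hence $S_s\ge2$, in every case. Thus $\Delta(B_1[j_0],B_2[j_0];B_0[j_0])$ is dense in $\mathbb{U}(B_0[j_0])$.

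Finally I would propagate density from the single block to all of $B_0$. For $u=\oplus_j u_j\in\mathbb{U}(B_0)$ with $u_{j_0}\in\Delta(B_1[j_0],B_2[j_0];B_0[j_0])$ and the remaining $u_j$ arbitrary, take $z\in B_1\cap\Ad u(B_2)$. As $z,u\in B_0$ they are block diagonal, so compressing to the $j_0$-block gives $z_{j_0}\in B_1[j_0]\cap\Ad u_{j_0}(B_2[j_0])=\mathbb{C}$. Because rank one makes every minimal central projection of $B_1$ have nonzero image in block $j_0$, scalarity of $z_{j_0}$ forces each central component of $z$ to be scalar, hence $z\in\mathbb{C}$. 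Therefore $\Delta(\pi\oplus\hat\pi)$ contains the dense set $\{\oplus_j u_j: u_{j_0}\in\Delta(B_1[j_0],B_2[j_0];B_0[j_0])\}$, so $\pi\oplus\hat\pi$ is DPI and Lemma \ref{Lemma:CompletionCriteria} yields primitivity. I expect the main obstacle to be the bookkeeping that turns hypothesis (iii) into the exact inequality demanded by Theorem \ref{Theorem:dCLesssOrEqualThanN2} (together with the $S_s\ge2$ check that validates the summand-size condition), and the verification that a scalar on the block $j_0$ propagates to a global scalar; this last step is exactly where rank one is indispensable, in contrast to the domino argument used for Theorem \ref{Thm:MultiplicitiesBiggerThan2}.
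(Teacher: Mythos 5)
Your proposal is correct and follows essentially the same route as the paper's proof: augment $\pi$ by a summand making $\pi^{(1)},\pi^{(2)}$ injective, use Lemma \ref{Lemma:UniformRep} (via Lemma \ref{Lemma:IndependentQuotient}) to equalize the multiplicities, apply Theorem \ref{Theorem:dCLesssOrEqualThanN2} in the single block $B_0[j_0]$ with the identical $\sum_i\mu_s(i,j_0)\geq 2$ verification, and then propagate scalarity from block $j_0$ to all of $B_0$ using that rank one makes each compression $B_s\to B_s[j_0]$ a $*$-isomorphism. The only divergence is cosmetic: you read the second numerator in hypothesis (iii) as $l_2$ rather than $l_1$, which is indeed the form the computation requires and which the paper's own proof also uses implicitly.
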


\begin{proof}

Take $\rho:A_1*_DA_2 \to \mathbb{B}(H)$, a unital, finite dimensional $*$-representation.  By Lemma \ref{Lemma:CompletionCriteria}, it suffices to show there
is a unital finite dimensional $*$-representation  $\hat{\rho}:A_1*_DA_2 \to \mathbb{B}(\hat{H})$, such that
$\rho \oplus \hat{\rho}$ is DPI. 

Since our assumptions imply that $A_1*_DA_2$ is RFD,  there is 
$\sigma:A_1*_DA_2 \to \mathbb{B}(K)$, a unital finite  dimensional $*$-representation, such that
$(\rho \oplus \sigma)^{(1)}$ and $(\rho \oplus \sigma)^{(2)}$ are injective. Let
$\pi:=\rho \oplus \sigma$. From Lemma \ref{Lemma:UniformRep}, there are two integers
$p_1$, $p_2$ and $\hat{\pi}:A_1*_DA_2\to \mathbb{B}(\hat{H})$, a unital finite dimensional
$*$-representation such that, for $s=1,2$, $(\pi \oplus \hat{\pi})^{(s)}(A_s)'$ is $*$-isomorphic to $M_{p_s}^{(l_s)}$. Take
$\hat{\rho}=\sigma \oplus \hat{\pi}$. We will show that $\rho\oplus \hat{\rho}$ is DPI.

For $s=1,2$, let $B_s=(\pi \oplus \hat{\pi}) ^{(s)}(A_s)'$ and let $B_0=\pi^{(0)}(D)'$. From construction,
$B_s$ is $*$-isomorphic to $M_{p_s}^{(l_s)}$. Also notice that if we let $B_0[j]$ denote the $j$-th direct summand
of $B_0$, then $B_0[j]$ is $*$-isomorphic to $M_{d_j}$ where $d_j=\sum_{i=1}^{l_1}\mu_1(i,j)p_1=\sum_{i=1}^{l_2}\mu_2(i,j)p_2$.

On the other hand, for $s=1,2$, let $B_s[j]$ be the projection of $B_s$ onto $B_0[j]$. Since $\mu_s$ have rank one, for 
all $j\in [l_0]$, $B_s[j]$ is $*$-isomorphic to $M_{p_s}^{(l_s)}$.  The next step is to show that $B_1[j_0],B_2[j_0]$
and $B_0[j_0]$, satisfy the  hypothesis of Theorem \ref{Theorem:dCLesssOrEqualThanN2}. Firstly, we need
to show that $p_s \leq d_{j_0}/2$ which is equivalent to $2\leq \sum_{i=1}^{l_s}\mu_s(i,j_0)$. But
since
\[
\frac{l_s}{\bigg( \sum_{i=1}^{l_s}\mu_s(i,j_0)\bigg)^2}<1
\]
we get $\sqrt{2} \leq \sqrt{l_s+1} \leq \sum_{i=1}^{l_s} \mu_s(i,j_0)$. Since $\sum_{i=1}^{l_s}\mu_s(i,j_0)$ is
a positive integer we get $2 \leq \sum_{i=1}^{l_s}\mu_s(i,j_0)$.

The last condition to apply Theorem \ref{Theorem:dCLesssOrEqualThanN2} is to show that $\dim(\mathbb{U}(B_1[j])+\dim(\mathbb{U}(B_2[j]))< \dim(\mathbb{U}(B_0[j]))$. But
\begin{eqnarray*}
\dim(\mathbb{U}(B_1[j_0])+\dim(\mathbb{U}(B_2[j_0]))&=& l_1p_1^2+l_2p_2^2 \\
&=& d_{j_0}^2\bigg(  \frac{l_1}{(\sum_{i=1}^{l_1}\mu_1(i,j_0))^2} \\
&+& \frac{l_1}{(\sum_{i=1}^{l_2}\mu_2(i,j_0))^2} \bigg) \\
& <  & d_{j_0}^2.
\end{eqnarray*}
We conclude that $\Delta(B_1[j_0],B_2[j_0];B_0[j_0])$ is dense in $\mathbb{U}(B_0[j_0])$.

Now, to show that $\pi$ is DPI we will show that 
\[
\prod_{\substack{j=1 \\ j\not= j_0}}^{l_0} \mathbb{U}(B_0[j])\times \Delta(B_1[j_0],B_2[j_0]; B_0[j_0]) \subseteq \Delta
(B_1,B_2;B_0).
\]
That is, we only need to perturb the $j_0$ coordinate by a unitary in $\Delta(B_1[j_0],B_2[j_0];B_0[j_0])$. This follows
from the fact that $B_s$ is $*$-isomorphic to $M_{p_s}^{(l_s)}$ and that, for all $j\in [l_0]$, $B_s[j]$ is
also $*$-isomorphic to $M_{p_s}^{(l_s)}$. We conclude $\Delta
(B_1,B_2;B_0)$ is dense in $\mathbb{U}(B_0)$. Hence $\pi \oplus \hat{\pi}=\rho\oplus \hat{\rho}$ is DPI. 
\end{proof}

\begin{rmk}\label{Remark:CasesRankOne}
Proposition \ref{Prop:PrimitivityRankOneEasyCase} guarantees that the algebra $A_1*_DA_2$ is primitive except in the
following cases:
\begin{enumerate}[(i)]
\item $l_2 \geq 2$, $l_1=1$, $\mu_1(i,j)=1$ for all $j\in [l_0]$. Notice that in this case, necessarily $A_1\simeq M_{l_0}$. 

\item $l_1 \geq 2$, $l_2=1$, $\mu_2(i,j)=1$ for all $j\in [l_0]$. Notice that in this case, necessarily $A_2\simeq M_{l_0}$. 

\item $l_1 = 2,l_2 = 2 $, $\mu_1(i,j)=1, \mu_2(i,j)=1$ for all $i$ and $j$.

\item $l_1=l_2=1$ and for all $j\in [l_0]$, $\mu_1(1,j)=1$ or $\mu_2(1,j)=1$.
\end{enumerate}
\end{rmk}

Cases (1) and (2) are symmetric and the next proposition deals with case (1).

\begin{prop}\label{Prop:PrimitivityRankOneMatrixAlgCase}
Assume 
\begin{enumerate}[(i)]
\item $l_1=1$, $\mu_1(i,j)=1$ for all $j\in [l_0]$,
\item $l_2 \geq 2$,
\item there are faithful traces on $M_{l_0}$ and $A_2$ whose restrictions to $D$ agree,
\item the rank $\mu_2$ is one,
\item \begin{equation}\label{Eqn:InequalityPrimitiveCaseRankOneMatrixAlg}
1+\sum_{j=1}^{l_0} \frac{1}{\sum_{i=1}^{l_2} \mu_2(i,j)}< l_0.
\end{equation}
\end{enumerate}
 Then $A_1*_{D}A_2$ is primitive. 
\end{prop}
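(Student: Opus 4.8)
The plan is to follow the scheme of Proposition \ref{Prop:PrimitivityRankOneEasyCase} — produce a DPI representation and check density of a general--position set — but now the diagonal nature of $\gamma_1$ forces a global argument over all summands of the base. Assumption (i) makes $A_1*_DA_2$ residually finite dimensional, so by Lemma \ref{Lemma:CompletionCriteria} it suffices, given a unital finite dimensional $\pi$, to build $\hat\pi$ with $\pi\oplus\hat\pi$ DPI. Since $l_1=1$ and $\mu_1(1,j)=1$ for all $j$, we have $A_1\cong M_{l_0}$ and $\gamma_1$ is the multiplicity--one diagonal embedding of $D\cong\mathbb{C}^{l_0}$. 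As in Proposition \ref{Prop:PrimitivityRankOneEasyCase}, I would first enlarge $\pi$ (Lemma \ref{Lemma:UniformRep}, via Lemma \ref{Lemma:IndependentQuotient}) so that, for $s=1,2$, $(\pi\oplus\hat\pi)^{(s)}$ is a uniform multiple $\oplus_i\id_{M_{n_s(i)}}^{(p_s)}$. Then $B_1:=(\pi\oplus\hat\pi)^{(1)}(A_1)'\cong M_{p_1}$ sits \emph{diagonally} inside $B_0:=(\pi\oplus\hat\pi)^{(0)}(D)'\cong\oplus_{j=1}^{l_0}M_{p_1}$, while $B_2:=(\pi\oplus\hat\pi)^{(2)}(A_2)'\cong M_{p_2}^{(l_2)}$, with $\mu(B_0,B_s)^t=\mu_s$ by Lemma \ref{Lemma:MultiplicitiesRep}. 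Because $\mu_2$ has rank one and $\mu_1$ is a row of ones, the relation $\mu_1^t(q_11)=\mu_2^t(q_21)$ of Lemma \ref{Lemma:IndependentQuotient} forces $m:=\sum_{i}\mu_2(i,j)$ to be independent of $j$, whence $p_1=mp_2$ and $\sum_i\mu_1(i,j)p_1=p_1$ for every $j$.

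The key structural point is that the projection of $B_1$ onto each summand $B_0[j]$ is all of $B_0[j]$, so the single--summand reduction of Proposition \ref{Prop:PrimitivityRankOneEasyCase} is unavailable; instead I would deduce density of $\Delta(B_1,B_2;B_0)$ directly from Proposition \ref{Prop:DensityHardVersion}, with ambient simple algebra $M_N=\mathbb{B}(H)$. Its dimension hypothesis reads $\dim\mathbb{U}(B_1)+\dim\mathbb{U}(B_2)=p_1^2+l_2p_2^2\le l_0p_1^2=\dim\mathbb{U}(B_0)$; dividing by $p_2^2$ this is $1+l_2/m^2\le l_0$, which follows from (v) since all entries of $\mu_2$ are nonzero gives $l_2\le m$, hence $l_2/m^2\le 1/m\le l_0/m<l_0-1$ by (v).

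It remains to bound $d(C,u)$ for every unital abelian proper $C\subseteq B_1$ of dimension $l\ge2$ and every $u$ with $C\subseteq\Ad u(B_2)$. Writing $c_r$ for the block sizes of $C$ in $M_{p_1}$ and $b_{i,r}$ for the partial multiplicities of $\tilde C:=\Ad u^*(C)$ in the $i$--th summand of $B_2$, a computation of the three commutants — using that $C$ is diagonal, so $C'\cap B_0$ is a direct sum of $l_0$ copies of the relative commutant of $C$ in $M_{p_1}$ — gives
\begin{equation*}
d(C,u)=p_1^2+l_2p_2^2-\sum_{i,r}b_{i,r}^2+(l_0-1)\sum_r c_r^2 .
\end{equation*}
The extra factor $(l_0-1)$ on $\sum_r c_r^2$, reflecting the $l_0$ non--simple summands of $B_0$, is the whole difficulty. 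Exactly as in Propositions \ref{dCAverageOverSubalgebras} and \ref{Prop:IncreasingProperty} I would show that $d$ does not decrease when two coordinates $r,s$ of $C$ are merged provided $\sum_i b_{i,r}b_{i,s}\le(l_0-1)c_rc_s$, and that such a pair always exists when $l\ge3$, reducing everything to $l=2$. For $l=2$, completing squares turns $d(C,u)<\dim\mathbb{U}(B_0)$ into
\begin{equation*}
\sum_i b_{i,1}b_{i,2}<(l_0-1)\,c_1c_2 ,
\end{equation*}
and here the rank identity $\sum_i n_2(i)b_{i,r}=l_0c_r$, together with $n_2(i)\ge1$ and $b_{i,1}+b_{i,2}=p_2$, lets me bound $\sum_i b_{i,1}b_{i,2}$ by a multiple of $\min\{c_1,c_2\}$ and convert this into hypothesis (v).

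The main obstacle is precisely this base case: the non--simplicity of $B_0$ inflates the positive term by the factor $l_0-1$, so a crude estimate (such as $b_{i,r}\le p_2$) is too wasteful, and extracting the threshold $m>l_0/(l_0-1)$ encoded in (v) requires a careful use of the multiplicity constraints $\sum_i n_2(i)b_{i,r}=l_0c_r$ relating the $B_1$--side block sizes $c_r$ to the $B_2$--side multiplicities $b_{i,r}$ (and the freedom to take $p_1,p_2$ large so that lower--order terms are absorbed). Once $\Delta(B_1,B_2;B_0)$ is dense, $\pi\oplus\hat\pi$ is DPI and Lemma \ref{Lemma:CompletionCriteria} completes the proof.
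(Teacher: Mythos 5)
Your proposal is sound, and it takes a genuinely different route from the paper's at the key technical step. After the common reductions (residual finite dimensionality comes from the traces in hypothesis (iii), not from (i) as you wrote; then Lemma \ref{Lemma:CompletionCriteria} and the uniform multiples of Lemma \ref{Lemma:UniformRep}), the paper does \emph{not} estimate against $B_2$ itself: it enlarges it, replacing $B_2$ by $\tilde{B}_2=\pi^{(2)}(\tilde{A}_2)'$ where $\tilde{A}_2$ is a maximal abelian subalgebra of $A_2$ containing $\gamma_2(D)$, so that $\Delta(B_1,\tilde{B}_2;B_0)\subseteq \Delta(B_1,B_2;B_0)$ and all entries of $\mu(B_0,\tilde{B}_2)$ are $0$ or $1$ with supports partitioning the index set; it then verifies the hypotheses of Proposition \ref{Prop:DensityHardVersion} for \emph{arbitrary} $C$ in a single pass, via Cauchy--Schwarz on $\sum_i\tilde{\mu}_2(i,j)b_{i,r}=m_{j,r}$ and a choice of $j_1$ minimizing $\sum_r m_{j,r}^2$, with hypothesis (v) consumed in both inequalities. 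You instead keep $B_2$, port the merging machinery of Propositions \ref{dCAverageOverSubalgebras} and \ref{Prop:IncreasingProperty} to the non-simple $B_0$ --- your formula for $d(C,u)$ and the merge identity $d(C_{(r,s)},u)=d(C,u)+2\bigl((l_0-1)c_rc_s-\sum_i b_{i,r}b_{i,s}\bigr)$ are both correct --- and reduce to $\dim C=2$, where the target becomes $\sum_i b_{i,1}b_{i,2}<(l_0-1)c_1c_2$. This is a legitimate alternative to the paper's one-shot estimate.

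The one soft spot is the constraint you propose to run this on: the aggregated identity $\sum_i n_2(i)b_{i,r}=l_0c_r$ combined only with $n_2(i)\geq 1$ is too lossy by a factor of $l_0$ and proves neither the merge condition nor the base case. What you need is the per-summand relation from Lemma \ref{Lemma:MultiplicitiesRep}: $\sum_i\mu_2(i,j)b_{i,r}=m_{j,r}=c_r$ for each fixed $j$. (Your own observation that $m=\sum_i\mu_2(i,j)$ is $j$-independent, combined with rank one, forces all columns of $\mu_2$ to be equal, so $n_2(i)=l_0\mu_2(i,j)\geq l_0$; thus your aggregated identity is equivalent to the per-$j$ one, but the useful lower bound is $n_2(i)\geq l_0$, not $\geq 1$.) Since rank one plus unitality gives $\mu_2(i,j)\geq 1$ for all $i,j$, the per-$j$ relation yields $\sum_i b_{i,r}\leq c_r$ and $\max_i b_{i,s}\leq c_s$, hence $\sum_i b_{i,r}b_{i,s}\leq c_rc_s\leq (l_0-1)c_rc_s$ for \emph{every} pair, so the merge step requires no search at all; and for $l=2$, expanding $c_1c_2=\sum_{i,k}\mu_2(i,j)\mu_2(k,j)b_{i,1}b_{k,2}$ shows that equality with $\sum_i b_{i,1}b_{i,2}$ would force $b_{i,1}=b_{i,2}=0$ for all but one index $i$, contradicting $b_{i,1}+b_{i,2}=p_2\geq 1$ (injectivity of $(\pi\oplus\hat{\pi})^{(2)}$) together with $l_2\geq 2$. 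So the base inequality is strict with no appeal to (v). With this correction your argument closes, and it is in fact slightly sharper than the paper's: in your route (v) enters only through the dimension inequality $\dim(\mathbb{U}(B_1))+\dim(\mathbb{U}(B_2))\leq\dim(\mathbb{U}(B_0))$, i.e. $1+l_2/m^2\leq l_0$, which already holds automatically since $m\geq l_2\geq 2$, whereas the paper's enlargement to $\tilde{B}_2$ inflates $\dim(\mathbb{U}(\tilde{B}_2))$ and makes (v) essential.
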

\begin{proof}

Let $\pi:M_{l_0}*_{\mathbb{C}^{l_0}}A_2 \to \mathbb{B}(H)$ be a unital finite dimensional $*$-representation. From
Lemma \ref{Lemma:CompletionCriteria}, we will finish if we manage to find $\hat{\pi}:M_{l_0}*_{\mathbb{C}^{l_0}}A_2 \to \mathbb{B}(\hat{H})$,
a unital finite dimensional $*$-representation such that $\pi \oplus \hat{\pi}$ is DPI. As in Proposition
\ref{Prop:PrimitivityRankOneEasyCase}, we might assume $\pi^{(1)}$ and $\pi^{(2)}$ are injective and then
Lemma \ref{Lemma:UniformRep} assures the existence of two integers $p_1$ and $p_2$, such that, for $s=1,2$,
$(\pi \oplus \hat{\pi})^{(s)}(A_s)'$ is $*$-isomorphic to $M_{p_s}^{(l_s)}$. We will show that
$\pi \oplus \hat{\pi}$ is DPI.

Proving that $\pi \oplus \hat{\pi}$ is DPI is not straightforward. Recall that in order to prove 
$\pi \oplus \hat{\pi}$ is  DPI we need to show 
\[
\Delta(B_1,B_2;B_0)=\{u\in \mathbb{U}(B_0): B_1\cap \Ad u(B_2)=\mathbb{C}\}
\]
is dense in $\mathbb{U}(B_0)$, where $B_0=\pi^{(0)}(B_0)'$, $B_1=(\pi \oplus \hat{\pi})(M_{l_0})'$ 
and $B_2=(\pi \oplus \hat{\pi})(A_2)'$.

 Well, the first step is to replace $B_2$. Let 
 $\tilde{A}_2$ denote the maximal abelian subalgebra of $A_2$ with the property that
 $ \gamma_2(D) \subseteq \tilde{A}_2$ and let $\tilde{B}_2=\pi^{(2)}(\tilde{A}_2)'$. The contention
 $\tilde{A}_2 \subseteq A_2$ implies $\Delta(B_1,\tilde{B}_2;B_0) \subseteq \Delta(B_1,B_2:B_0)$. Due to calculations, 
it is going to be easier to show the density of $\Delta(B_1,\tilde{B}_2;B_0)$  than that of $\Delta(B_1,B_2;B_0)$. 
  
The calculation that we just mentioned come from Proposition \ref{Prop:DensityHardVersion} which we will
use to prove $\Delta(B_1,\tilde{B}_2;B_0)$ is dense is $\mathbb{U}(B_0)$. Thus,
let $C$ be a unital abelian  proper C*-subalgebra of $B_1$, with $\dim(C)\geq 2$ and  let   $u$ be in  $\mathbb{U}(B_0)$  such that  $C$ is contained in $\Ad u(\tilde{B}_2)$. According to Proposition \ref{Prop:DensityHardVersion} we need to show that
\begin{equation}\label{Eqn:DensityEasy}
\dim(\mathbb{U}(B_1))+\dim(\mathbb{U}(\tilde{B}_2))\leq \mathbb{U}(B_0)
\end{equation}
and

\begin{equation}\label{Eqn:DensityHard}
\left. \begin{array}{ll}
\hspace{0.4cm} \dim(\mathbb{U}(B_1)) - \dim(\mathbb{U}(B_1\cap C'))  &   \\
  +\dim(\mathbb{U}(\tilde{B}_2))-  \dim(\mathbb{U}(\Ad u(\tilde{B}_2)\cap C')) &   <  \dim(\mathbb{U}(B_0))\\
  +\dim(\mathbb{U}(B_0\cap C'))  &  \\
\end{array}
\right\}
\end{equation}
where $C'$ is the commutant relative to $\mathbb{B}(H\oplus \hat{H})$.

We begin by showing  that  (\ref{Eqn:DensityEasy})  holds. Firstly, recall that
$A_2\simeq \oplus_{i=1}^{l_2}M_{n_2(i)}$. Hence $\tilde{A}_2\simeq \oplus_{i=1}^{l_2}\oplus_{k=1}^{n_2(i)}\mathbb{C}$.
Let $\tilde{l}_2=\sum_{i=1}^{l_2}n_2(i)$ denote the dimension of $\tilde{A}_2$.

On the other hand, the election of
 $\hat{\pi}$ implies that $B_1 \simeq M_{p_1}$,  $B_2\simeq M_{p_2}^{(l_2)}$   and   $B_0\simeq \oplus_{j=1}^{l_0}M_{d_j}$ 
where $d_j=\sum_{i=1}^{l_2}\mu_2(i,j)p_2=\mu_1(1,j)p_1$. Since we are assuming $\mu_1(1,j)=1$, for all
$j\in [l_0]$, we obtain  $d_j=p_1$ and $(\sum_{i=1}^{l_2}\mu_2(i,j))p_2=p_1$, for all $j\in [l_0]$. It follows that
$\tilde{B}_2\simeq M_{p_2}^{(\tilde{l}_2)}$.

Hence $\dim(\mathbb{U}(B_0))=l_0p_1^2$, $\dim(\mathbb{U}(B_1))=p_1^2$ and $\dim(\mathbb{U}(\tilde{B}_2))=\tilde{l}_2p_2$. Now notice that $\tilde{l}_2= \sum_{j=1}^{l_0}\sum_{i=1}^{l_2}\mu_2(i,j)$. Thus, taking into account $(\sum_{i=1}^{l_2}\mu_2(i,j))p_2=p_1$, we deduce 
\begin{align*}
\dim(\mathbb{U}(B_1))+\dim(\mathbb{U}(\tilde{B}_2))&=&p_1^2+ \sum_{j=1}^{l_0} \bigg( \sum_{i=1}^{l_2}\mu_2(i,j)\bigg)p_2^2 \\
&=& p_1^2\bigg(  1+ \sum_{j=1}^{l_0} \frac{1}{\sum_{i=1}^{l_2}\mu_2(i,j)} \bigg).
\end{align*}

Therefore  (\ref{Eqn:DensityEasy}) holds because of (\ref{Eqn:InequalityPrimitiveCaseRankOneMatrixAlg}). 

To prove  (\ref{Eqn:DensityHard}) we need to work a little bit harder. Let $l=\dim(C)$ and let
\begin{eqnarray*}
\mu(B_1,C)&=&[a_{r}]_{1 \leq r \leq l},\\
\mu(\tilde{B}_2,u^*Cu)&=&[b_{i,r}]_{1\leq i \leq \tilde{l}_2, 1\leq r \leq l},\\
\mu(B_0,C)&=&[m_{j,r}]_{1\leq j \leq l_0, 1\leq r \leq l}.\\
\end{eqnarray*}
With this notation we need to show
\[
p_1^2-\sum_{ r=1}^{l} a_{1,r}^2+l_2p_2^2-\sum_{i=1}^{\tilde{l}_2}\sum_{r=1}^lb_{i,r}^2+\sum_{j=1}^{l_0}\sum_{r=1}^l m_{j,r}^2 < l_0p_1^2.
\]
By the proof of (\ref{Eqn:InequalityPrimitiveCaseRankOneMatrixAlg}), the latter is equivalent to
\begin{equation}\label{Eqn:DensityHardSimplified}
\sum_{ \substack{1 \leq j \leq l_0 \\ 1 \leq r \leq l}} m_{j,r}^2-\sum_{1\leq r \leq l} a_{1,r}^2-
\sum_{ \substack{1\leq i \leq \tilde{l}_2 \\ 1 \leq r \leq l}}b_{i,r}^2 < p_1^2\bigg(l_0-1-\sum_{j=1}^{l_0} \frac{1}{\sum_{i=1}^{l_2}\mu_2(i,j)} \bigg).
\end{equation}

At this point, we want to highlight some relations that will help us to prove (\ref{Eqn:DensityHardSimplified}). Let
$\tilde{\mu}_2$ denote the matrix of partial multiplicities of the inclusion $\gamma(D)\hookrightarrow \tilde{A}_2$. From
the identities $\mu(B_0,B_1)\mu(B_1,C)=\mu(B_0,C)=\mu(B_0,\tilde{B}_2)\mu(\tilde{B}_2,C)$ and the fact that
$\mu_1^t=\mu(B_0,B_1), \tilde{\mu}_2^t=\mu(B_0,\tilde{B}_2)$ (Lemma \ref{Lemma:MultiplicitiesRep}), we deduce that:

\begin{eqnarray*}
\textrm{for all $r\in [l]$ and all $j\in [l_0]$ ,   $a_{r}=m_{j,r}$, } \\
\textrm{for all $r\in [l]$ and all  $j\in [l_0]$,  $\sum_{i=1}^{\tilde{l}_2}\tilde{\mu}_2(i,j)b_{i,r}=m_{j,r}$.}
\end{eqnarray*}

Since $C$ is abelian we also must have:
\[
\textrm{for all $j\in [l_0]$, $\sum_{r}m_{j,r}=p_1$ }.
\]

Now we proceed with (\ref{Eqn:DensityHardSimplified}). From Cauchy-Schwartz's inequality
\[
\sum_{\substack{1\leq i \leq \tilde{l}_2 \\ \tilde{\mu}_2(i,j) \not=0} } b_{i,r}^2 \geq \frac{m_{j,r}^2}{\sum_{i=1}^{\tilde{l}_2}\tilde{\mu}_2(i,j)^2}.
\]
But notice that  $\tilde{\mu}_2(i,j)$ is either 0 or 1 and even more, from the selection of $\tilde{A}_2$, we have that $\sum_{i=1}^{\tilde{l}_2}\tilde{\mu}_2(i,j)=\sum_{i=1}^{l_2}\mu_2(i,j)$. Hence, for all $r\in [l]$ and all $j\in[l_0]$
\[
\sum_{\substack{1\leq i \leq \tilde{l}_2 \\ \tilde{\mu}_2(i,j) \not=0} } b_{i,r}^2 \geq \frac{m_{j,r}^2}{\sum_{i=1}^{l_2} \mu_2(i,j)}.
\]
Here, it is important to realize that the sets $\{i\in[\tilde{l}_2]: \tilde{\mu}_2(i,j)\not= 0\}$, for $j\in [l]$, form a partition of $[\tilde{l}_2]$. Thus
\[
\sum_{i=1}^{\tilde{l}_2}b_{i,r}^2 \geq \sum_{j=1}^{l_0}\frac{m_{j,r}^2}{\sum_{i=1}^{l_2}\mu_2(i,j)}  .
\]

With all this under consideration
\begin{eqnarray*}
\sum_{ r=1}^{ l} \sum_{ j=1}^{ l_0} m_{j,r}^2-\sum_{ r=1}^{l} a_{r}^2-
 \sum_{  r=1}^{l}\sum_{  i=1}^{\tilde{l}_2}b_{i,r}^2  \leq  \\
 \sum_{r=1} ^{l}\sum_{ j =1}^{ l_0} m_{j,r}^2-\sum_{ r=1}^{l} a_{r}^2-
\sum_{ r=1}^{l} \sum_{ j=1}^{l_0}  \frac{m_{j,r}^2}{\sum_{i=1}^{l_2}\mu_2(i,j)} .
\end{eqnarray*}

Next pick $j_1\in [l_0]$ such that  $\min_{j\in [l_0]}\{ \sum_{r=1}^l m_{j,r}^2 \}=\sum_{r=1}^l m_{j_1,r}^2$ and take
$j_2\in [l_0]$ with $j_1\not= j_2$ (recall that $l_0 \geq 2$). Using that $a_r=m_{j_2,r}$, for all $r$, we obtain
\begin{eqnarray*}
\sum_{r=1} ^{l}\sum_{ j =1}^{ l_0} m_{j,r}^2-\sum_{ r=1}^{l} a_{r}^2-
\sum_{ r=1}^{l} \sum_{ j=1}^{l_0}  \frac{m_{j,r}^2}{\sum_{i=1}^{l_2}\mu_2(i,j)} \leq  \\
  \sum_{\substack{ 1\leq j \leq l_0 \\
  j\not= j_1}}  \sum_{r=1} ^{l}  m_{j,r}^2-\sum_{\substack{ 1\leq j \leq l_0 \\ j\not= j_1,j_2 }}\sum_{r=1}^l \frac{m_{j,r}^2}{\sum_{i=1}^{l_2}\mu_2(i,j)}  -
  \sum_{r=1}^l \frac{m_{j_1,r}^2}{\sum_{i=1}^{l_2}\mu_2(i,j_2)} .
\end{eqnarray*}

If we define $g:[l_0]\setminus \{j_2 \} \to \mathbb{Q}$ by $g(j)=\frac{1}{\sum_{i=1}^{l_2}\mu_2(i,j)}$ for
$j\not= j_1$ and $g(j_1)=\frac{1}{\sum_{i=1}^{l_2}\mu_2(i,j_1)}+\frac{1}{\sum_{i=1}^{l_2}\mu_2(i,j_2)}$, we can rewrite the last expression as
\begin{eqnarray*}
  \sum_{\substack{ 1\leq j \leq l_0 \\
  j\not= j_1}}  \sum_{r=1} ^{l}  m_{j,r}^2-\sum_{\substack{ 1\leq j \leq l_0 \\ j\not= j_1,j_2 }}\sum_{r=1}^l  g(j)m_{j,r}^2
=\sum_{\substack{ 1\leq j \leq l_0 \\
  j\not= j_1}} \bigg(\sum_{r=1}^lm_{j,r}^2 \bigg)(1-g(j)).
 \end{eqnarray*}

Lastly, since $1-g(j)>0$ and $\sum_{r=1}^lm_{j,r}=p_1$, for all $j\in [l_0]$ and $l\geq 2$, we deduce
\[
\sum_{\substack{ 1\leq j \leq l_0 \\
  j\not= j_1}} \bigg(\sum_{r=1}^lm_{j,r}^2 \bigg)(1-g(j))< p_1^2\bigg(  l_0-1-\sum_{j=1}^{l_0}\frac{1}{\sum_{i=1}^l \mu_2(i,j)} \bigg)
\]
and therefore (\ref{Eqn:DensityHardSimplified}) holds.
\end{proof}

From Proposition \ref{Prop:PrimitivityRankOneMatrixAlgCase} we obtain that the algebras of the form
 $M_{l_0}*_{\mathbb{C}^{l_0}}A_2$, where $\mu_2$ has rank one, are
primitive, except for $M_2*_{\mathbb{C}^2}(M_2\oplus M_2)$. But this and case (3) in Remark \ref{Remark:CasesRankOne}  are covered
by the next proposition.

\begin{prop}\label{Prop:PrimitivityOnlyOnes}
Assume that, for $s=1,2$,  $\mu_s(i,j)=1$ for all $i\in [l_s]$ and all $j\in [l_0]$.
If $l_0 \geq 2$ and one of $l_1$ or $l_2$ is bigger than 1, then $A_1*_{\mathbb{C}^{l_0}}A_2$ is primitive.
\end{prop}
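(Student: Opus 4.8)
The plan is to run the same completion strategy as in Proposition \ref{Prop:PrimitivityRankOneEasyCase}, but to verify density through the general criterion of Proposition \ref{Prop:DensityHardVersion} applied to the whole commutant $B_0$ rather than coordinatewise through Theorem \ref{Theorem:dCLesssOrEqualThanN2}. First I would record the structural consequence of the hypothesis: since every entry of $\mu_s$ equals $1$, each direct summand of $A_s$ has dimension $\sum_j \mu_s(i,j)=l_0$, so $A_s\simeq M_{l_0}^{(l_s)}$ with $\gamma_s$ the diagonal inclusion, and $\mu_s$ is the $l_s\times l_0$ all-ones matrix (rank one). In particular Lemma \ref{Lemma:RFDMatricesMultiplicities} gives $\mu_1^t(l_2\,1_{l_1})=l_1l_2\,1_{l_0}=\mu_2^t(l_1\,1_{l_2})$, so $A_1*_DA_2$ is RFD and Lemma \ref{Lemma:CompletionCriteria} reduces us to completing an arbitrary finite dimensional $\pi$ to a DPI representation. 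As in Proposition \ref{Prop:PrimitivityRankOneEasyCase} I would first enlarge $\pi$ so that $\pi^{(1)},\pi^{(2)}$ are injective and then invoke Lemma \ref{Lemma:UniformRep} to obtain $\hat\pi$ and integers $p_1,p_2$ with $B_s:=(\pi\oplus\hat\pi)^{(s)}(A_s)'\simeq M_{p_s}^{(l_s)}$ and $B_0:=(\pi\oplus\hat\pi)^{(0)}(D)'\simeq\oplus_{j=1}^{l_0}M_d$, where $d=l_1p_1=l_2p_2$ because $d_j=\sum_i\mu_s(i,j)p_s=l_sp_s$ is independent of $j$ and of $s$.

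The point where the easy argument breaks down is exactly the cases left open in Remark \ref{Remark:CasesRankOne}: the coordinatewise hypothesis of Theorem \ref{Theorem:dCLesssOrEqualThanN2} on $B_0[j]\simeq M_d$ asks for $\dim\mathbb{U}(B_1[j])+\dim\mathbb{U}(B_2[j])<d^2$, i.e.\ $\tfrac{1}{l_1}+\tfrac{1}{l_2}<1$, which is false when $l_1=l_2=2$ and when $\min\{l_1,l_2\}=1$. To recover the missing slack I would instead apply Proposition \ref{Prop:DensityHardVersion} to the triple $B_1,B_2\subseteq B_0$ inside $M_N=\mathbb{B}(H\oplus\hat H)$, where the extra factor $l_0\geq2$ in $\dim\mathbb{U}(B_0)=l_0d^2$ is available. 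The hypothesis $\dim\mathbb{U}(B_1)+\dim\mathbb{U}(B_2)\leq\dim\mathbb{U}(B_0)$ is immediate since it reads $\tfrac{1}{l_1}+\tfrac{1}{l_2}\leq l_0$, and the real work is in the pointwise bound on $d(C,u)$.

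For that bound, given an abelian proper $C\subseteq B_1$ with $\dim C=l\geq2$ and $u\in\mathbb{U}(B_0)$ with $C\subseteq\Ad u(B_2)$, I would set $[a_{i,r}]=\mu(B_1,C)$, $[b_{i,r}]=\mu(\Ad u(B_2),C)$ and $[m_{j,r}]=\mu(B_0,C)$. The decisive simplification is that $\mu(B_0,B_s)^t=\mu_s$ is all-ones (Lemma \ref{Lemma:MultiplicitiesRep}), so the relations $\mu(B_0,B_s)\mu(B_s,C)=\mu(B_0,C)$ force $m_{j,r}=\sum_i a_{i,r}=\sum_i b_{i,r}=:m_r$, independent of $j$; hence $\dim\mathbb{U}(C'\cap B_0)=l_0\sum_r m_r^2$ and
\begin{equation*}
d(C,u)=\frac{d^2}{l_1}+\frac{d^2}{l_2}+l_0\sum_r m_r^2-\sum_{i,r}a_{i,r}^2-\sum_{i,r}b_{i,r}^2.
\end{equation*}
Cauchy--Schwarz gives $\sum_i a_{i,r}^2\geq m_r^2/l_1$ and $\sum_i b_{i,r}^2\geq m_r^2/l_2$, whence, writing $S:=\sum_r m_r^2$,
\begin{equation*}
d(C,u)\leq\Bigl(\frac{1}{l_1}+\frac{1}{l_2}\Bigr)(d^2-S)+l_0S.
\end{equation*}
Since $\sum_r m_r=d$ with $l\geq2$ strictly positive summands, $S<d^2$, so the right-hand side is $<l_0d^2=\dim\mathbb{U}(B_0)$ precisely when $\tfrac{1}{l_1}+\tfrac{1}{l_2}<l_0$. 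This holds because $l_0\geq2$ and $\max\{l_1,l_2\}\geq2$ force $\tfrac{1}{l_1}+\tfrac{1}{l_2}\leq\tfrac{3}{2}<2\leq l_0$. Thus Proposition \ref{Prop:DensityHardVersion} yields density of $\Delta(B_1,B_2;B_0)$, $\pi\oplus\hat\pi$ is DPI, and primitivity follows from Lemma \ref{Lemma:CompletionCriteria}.

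The main obstacle, and the reason this proposition is separated from Proposition \ref{Prop:PrimitivityRankOneEasyCase}, is the borderline case $l_1=l_2=2$ (together with $M_2*_{\mathbb{C}^2}(M_2\oplus M_2)$), where the coordinatewise count is an equality rather than a strict inequality. The remedy is to spread the deficit over all $l_0$ summands of $B_0$ at once: the all-ones multiplicity matrices make $\dim\mathbb{U}(C'\cap B_0)$ grow by the full factor $l_0$ while the Cauchy--Schwarz estimates on $B_1$ and $B_2$ remain governed only by $1/l_1+1/l_2$, so the hypothesis $l_0\geq2$ supplies exactly the strictness that the single-coordinate argument lacks.
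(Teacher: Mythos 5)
Your proof is correct, and it follows the paper's skeleton up to the final step: the same RFD reduction via Lemma \ref{Lemma:RFDMatricesMultiplicities}, the same completion criterion (Lemma \ref{Lemma:CompletionCriteria}), the same use of Lemma \ref{Lemma:UniformRep} to obtain $B_s\simeq M_{p_s}^{(l_s)}$ and $B_0\simeq M_d^{(l_0)}$ with $d=l_1p_1=l_2p_2$, and the same global application of Proposition \ref{Prop:DensityHardVersion} to $B_1,B_2\subseteq B_0$ (the paper likewise does not invoke Theorem \ref{Theorem:dCLesssOrEqualThanN2} here, for exactly the reason you identify). Where you genuinely diverge is in verifying $d(C,u)<\dim(\mathbb{U}(B_0))$. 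The paper fixes two indices $j_1\neq j_2$, rewrites $\sum_i a_{i,r}^2=m_{j_1,r}^2-\sum_{i_1\neq i_2}a_{i_1,r}a_{i_2,r}$ (similarly for the $b$'s with $j_2$), bounds the cross terms by $\sum_r a_{i_1,r}a_{i_2,r}<p_1^2$, and absorbs the remaining $l_0-2$ summands using $\sum_r m_{j,r}^2\leq d_j^2$. Your route --- the Cauchy--Schwarz lower bounds $\sum_i a_{i,r}^2\geq m_r^2/l_1$ and $\sum_i b_{i,r}^2\geq m_r^2/l_2$ combined with $S:=\sum_r m_r^2<d^2$, reducing everything to the single inequality $1/l_1+1/l_2<l_0$ --- is shorter and in one respect tighter: the paper's strictness claim $\sum_r a_{i_1,r}a_{i_2,r}<p_1^2$ ``since $l\geq 2$'' is delicate (equality can occur for a particular pair when both rows of $\mu(B_1,C)$ concentrate on the same column), whereas your strictness rests on $S<d^2$, which is unimpeachable once one notes $m_r\geq 1$ for every $r$. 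That last point you assert without justification and should add: each minimal projection of $C$ is a nonzero projection in $B_1$, so every column of $\mu(B_1,C)$ has a nonzero entry and hence $m_r=\sum_i a_{i,r}\geq 1$; with $l\geq 2$ positive summands, $d^2=(\sum_r m_r)^2>\sum_r m_r^2=S$. With that one-line addition your argument is complete, and it trades the paper's two-index bookkeeping for a uniform convexity estimate that spreads the needed slack over all $l_0$ summands of $B_0$ at once.
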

\begin{proof}

Let $\pi:A_1*_{\mathbb{C}^{l_0}}A_2 \to \mathbb{B}(H)$ be a unital finite dimensional $*$-representation.We might assume $\pi^{(1)}$ and $\pi^{(2)}$ are injective . By 
Lemma \ref{Lemma:UniformRep} there are two integers $p_1$ and $p_2$, such that, for $s=1,2$,
$(\pi \oplus \hat{\pi})^{(s)}(A_s)'$ is $*$-isomorphic to $M_{p_s}^{(l_s)}$. We will show that
$\pi \oplus \hat{\pi}$ is DPI.

Let $B_0=(\pi \oplus \hat{\pi})(D)'$, $B_s=(\pi \oplus \hat{\pi})(A_s)'$. Then
$B_s$ is $*$-isomorphic to $M_{p_s}^{(l_s)}$ and $B_0$ is $*$-isomorphic to $\oplus_{j=1}^{l_0}M_{d_j}$
where $d_j=\sum_{i=1}^{l_1}\mu_1(i,j)p_1=\sum_{i=1}^{l_2}\mu_2(i,j)p_2$. Since $\mu_s(i,j)=1$ for all $i$ and
$j$ we get $d_j=l_1p_1=l_2p_2$.

Again, to show $\pi \oplus \hat{\pi}$ is DPI we are going to show,  according to Proposition \ref{Prop:DensityHardVersion} , that

\begin{equation}
\dim(\mathbb{U}(B_1))+\dim(\mathbb{U}(B_2))\leq \mathbb{U}(B_0)
\end{equation}
and
\begin{equation}
\left. \begin{array}{ll}
\hspace{0.4cm} \dim(\mathbb{U}(B_1)) - \dim(\mathbb{U}(B_1\cap C'))  &   \\
  +\dim(\mathbb{U}(B_2))-  \dim(\mathbb{U}(\Ad u(B_2)\cap C')) &   <  \dim(\mathbb{U}(B_0))\\
  +\dim(\mathbb{U}(B_0\cap C'))  &  \\
\end{array}
\right\}
\end{equation}
where $u\in \mathbb{U}(B_0)$ is such that $C\subseteq Ad u(B_2)$ and  $C'$ is the commutant relative to $\mathbb{B}(H\oplus \hat{H})$.

The first inequality  becomes $l_1p_1^2+l_2p_2^2< \sum_{j=1}^{l_0}d_j^2$. But $d_j=l_1p_1=l_2p_2$, so this inequality holds
since $l_0 \geq 2$ and because one of $l_1$ or $l_2$ is bigger than 1. 

To prove the second inequality let $l=\dim(C)$ and let
\begin{eqnarray*}
\mu(B_1,C)&=&[a_{i,r}]_{1\leq i \leq l_1, 1 \leq r \leq l},\\
\mu(B_2,u^*Cu)&=&[b_{i,r}]_{1\leq i \leq l_2, 1\leq r \leq l},\\
\mu(B_0,C)&=&[m_{j,r}]_{1\leq j \leq l_0, 1\leq r \leq l}.\\
\end{eqnarray*}

Thus we need to show
\begin{equation}\label{Eqn:AllOnes}
\sum_{j=1}^{l_0}\sum_{r=1}^{l}m_{j,r}^2-\sum_{i=1}^{l_1}\sum_{r=1}^l a_{i,r}^2-\sum_{i=1}^{l_2} \sum_{r=1}^l b_{i,r}^2<\sum_{j=1}^{l_0}d_j-l_1p_1^2-l_2p_2^2.
\end{equation}

Since $\mu_s(i,j)=1$ for all $i$, $j$ and $s$, it follows that, for all $j\in [l_0]$:
\[
\sum_{i=1}^{l_1}a_{i,r}=m_{j,r}=\sum_{i=1}^{l_2}b_{j,r},
\]
and in consequence, for fixed $j_1,j_2 \in [l_0]$, $j_1\not= j_2$, we have
\[
\sum_{i=1}^{l_1}a_{i,r}^2=m_{j_1,r}^2-\sum_{\substack{1\leq i_1, i_2 \leq l_1 \\ i_1\not= i_1}}a_{i_1,r}a_{i_2,r},
\]
\[
\sum_{i=1}^{l_2}b_{i,r}^2=m_{j_2,r}^2-\sum_{\substack{1\leq i_1, i_2 \leq l_2 \\ i_1\not= i_1}}b_{i_1,r}b_{i_2,r}.
\]

Thus, the left-hand side  of  (\ref{Eqn:AllOnes}) equals
\[
 \sum_{ \substack{1\leq  j \leq l_0 \\ j \not= j_1, j_2}} \sum_{r=1}^l m_{j,r}^2+ \sum_{\substack{1\leq i_1, i_2 \leq l_2 \\ i_1\not= i_1}} \sum_{r=1}^la_{i_1,r} a_{i_2,r}+\sum_{\substack{1\leq i_1, i_2 \leq l_2 \\ i_1\not= i_1}}\sum_{r=1}^l b_{i_1,r} b_{i_2,r}.
\]

Next, we use that, for all $i$, $\sum_{r=1}^l a_{i,r}=p_1$, $\sum_{r=1}^lb_{i,r}=p_2$ and Cauchy inequality to obtain
\[
\sum_{r=1}^l a_{i_1,j}a_{i_2,r} < p_1^2,  \quad \sum_{r=1}^l b_{i_1,j}b_{i_2,r} < p_2^2.
\]

Here, it is important to notice that we have a strict  inequality  since $l\geq 2$.

Hence, the left-hand side of (\ref{Eqn:AllOnes}) is bounded above by
\[
 \sum_{j=1}^{l_0}\sum_{r=1}^{l}m_{j,r}^2-\sum_{i=1}^{l_1}\sum_{r=1}^l a_{i,r}^2-\sum_{i=1}^{l_2} \sum_{r=1}^l b_{i,r}^2 < \sum_{ \substack{1\leq  j \leq l_0 \\ j \not= j_1, j_2}} \sum_{r=1}^l m_{j,r}^2+l_1(l_1-1)p_1^2+l_2(l_2-1)p_2^2.
\]
Then, to prove (\ref{Eqn:AllOnes}), it suffices to show
\[
 \sum_{ \substack{1\leq  j \leq l_0 \\ j \not= j_1, j_2}} \sum_{r=1}^l m_{j,r}^2 \leq  \sum_{j=1}^{l_0}d_j^2-(l_1p_1)^2-(l_2p_2)^2=\sum_{\substack{1\leq  j \leq l_0 \\ j \not= j_1, j_2}} d_j^2
\]
which follows from the fact that $l_0 \geq 2$ and $\sum_{r=1}^l m_{j,r}=d_j$.
\end{proof}

Notice that previous proposition if false for $l_0=1$, because in this case we obtain $\mathbb{C}^2*_{\mathbb{C}}\mathbb{C}^2$,
which is known to have a non trivial center and hence not primitive.

\begin{prop}\label{MatrixMatrix}
Assume:
\begin{enumerate}[(i)]
\item $n_1,n_2 \geq 2$, 
\item $D$ is abelian and $\dim(D)=l_0 \geq 2$,
\item either 
\begin{enumerate}[(a)]
\item there is $s_1\in \{1,2\}$ and $j_1\in [l_0]$ such that $\mu_{s_1}(1,j_1) \geq 2$, 
\item $l_0 \geq 3$.
\end{enumerate}
\end{enumerate}
Then any
$\pi:M_{n_1}*_DM_{n_2}\to \mathbb{B}(H)$, unital finite dimensional $*$-representation is DPI.

\end{prop}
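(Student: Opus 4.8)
The plan is to establish the density of $\Delta(\pi)$ directly from Proposition~\ref{Prop:DensityHardVersion}, applied to the triple $B_0=\pi^{(0)}(D)'$, $B_1=\pi^{(1)}(M_{n_1})'$, $B_2=\pi^{(2)}(M_{n_2})'$ inside $\mathbb{B}(H)=M_N$; note $B_1,B_2\subseteq B_0$ since $\pi^{(0)}(D)\subseteq\pi^{(s)}(A_s)$. Because $M_{n_1}$ and $M_{n_2}$ are simple, $\pi^{(1)}$ and $\pi^{(2)}$ are multiples of their identity representations, so $B_1\cong M_{p_1}$ and $B_2\cong M_{p_2}$ are themselves simple. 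By Lemma~\ref{Lemma:MultiplicitiesRep} the inclusion $B_s\hookrightarrow B_0$ carries the transposed multiplicities of $\gamma_s$, and since $l_1=l_2=1$ this gives $B_0\cong\bigoplus_{j=1}^{l_0}M_{d_j}$ with $d_j=p_1\mu_1(1,j)=p_2\mu_2(1,j)$. Writing $S_s=\sum_{j=1}^{l_0}\mu_s(1,j)^2$, I record that $\dim\mathbb{U}(B_0)=\sum_j d_j^2=p_1^2S_1=p_2^2S_2$.

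First I would dispatch the easy hypothesis $\dim\mathbb{U}(B_1)+\dim\mathbb{U}(B_2)=p_1^2+p_2^2\le\dim\mathbb{U}(B_0)$. Substituting $p_2^2=p_1^2S_1/S_2$ shows this is equivalent to $(S_1-1)(S_2-1)\ge 1$, which always holds because each $\mu_s(1,j)\ge1$ (the inclusions are unital) forces $S_s\ge l_0\ge2$.

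The substantive part is the strict bound $d(C,u)<\dim\mathbb{U}(B_0)$ for every unital abelian proper $C\subseteq B_1$ with $\dim C\ge2$ and every $u\in\mathbb{U}(B_0)$ with $C\subseteq\Ad u(B_2)$. Putting $[a_r]_r=\mu(B_1,C)$, $[b_r]_r=\mu(\Ad u(B_2),C)$ and $[m_{j,r}]_{j,r}=\mu(B_0,C)$, so that $\sum_r a_r=p_1$ and $\sum_r b_r=p_2$, the usual relative-commutant count gives
\[
d(C,u)=\Big(p_1^2-\sum_r a_r^2\Big)+\Big(p_2^2-\sum_r b_r^2\Big)+\sum_{j,r}m_{j,r}^2.
\]
The key observation---and the step I expect to be the crux---is that multiplicativity of partial multiplicities along the two towers $C\subseteq B_1\subseteq B_0$ and $C\subseteq\Ad u(B_2)\subseteq B_0$ forces the double factorization $m_{j,r}=\mu_1(1,j)\,a_r=\mu_2(1,j)\,b_r$ (here $u\in\mathbb{U}(B_0)$ preserves the central decomposition of $B_0$, so the $B_2$-tower contributes exactly $\mu_2(1,j)$). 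Hence $\sum_{j,r}m_{j,r}^2=S_1\sum_r a_r^2=S_2\sum_r b_r^2$, and writing $T=\sum_r a_r^2$ I can eliminate the $b_r$ and $p_2$ and reduce the target inequality $d(C,u)<p_1^2S_1$ to
\[
K\,(T-p_1^2)<0,\qquad K:=\frac{(S_1-1)(S_2-1)-1}{S_2}.
\]
Since $\dim C\ge2$ forces at least two nonzero $a_r$, one has $T=\sum_r a_r^2<\big(\sum_r a_r\big)^2=p_1^2$; thus the inequality holds precisely when $K>0$, i.e. when $(S_1-1)(S_2-1)>1$.

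Finally I would verify this strict inequality from hypothesis (iii). In case (a), $\mu_{s_1}(1,j_1)\ge2$ gives $S_{s_1}\ge(l_0-1)+4\ge5$, so $S_{s_1}-1\ge4$ while the other factor is $\ge l_0-1\ge1$; in case (b), $l_0\ge3$ yields $S_s\ge l_0\ge3$, so both factors are $\ge2$. In either case $(S_1-1)(S_2-1)>1$, whence $K>0$ and $d(C,u)<\dim\mathbb{U}(B_0)$. Proposition~\ref{Prop:DensityHardVersion} then makes $\Delta(B_1,B_2;B_0)=\Delta(\pi)$ dense in $\mathbb{U}(B_0)$, which is exactly the assertion that $\pi$ is DPI. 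It is worth noting that the excluded boundary case $(S_1-1)(S_2-1)=1$ is precisely $M_2*_{\mathbb{C}^2}M_2$, in agreement with Proposition~\ref{Prop:ExmapleM2M2}.
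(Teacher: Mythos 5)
Your proposal is correct, and its skeleton coincides with the paper's: both apply Proposition~\ref{Prop:DensityHardVersion} to the triple $B_0=\pi^{(0)}(D)'$, $B_s=\pi^{(s)}(M_{n_s})'$, identify $B_s\simeq M_{p_s}$ and $B_0\simeq\oplus_{j=1}^{l_0}M_{d_j}$ with $d_j=\mu_1(1,j)p_1=\mu_2(1,j)p_2$ via Lemma~\ref{Lemma:MultiplicitiesRep}, and rest on the same factorization $m_{j,r}=\mu_1(1,j)a_r=\mu_2(1,j)b_r$ (the paper states this as $\mu_1(j)a_r=m_{j,r}=\mu_2(j)b_r$, for the same reason you give: a unitary in $\mathbb{U}(B_0)$ fixes the central projections of $B_0$, so the $\Ad u(B_2)$-tower carries the multiplicities $\mu_2^t$). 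Where you genuinely diverge is the verification of $d(C,u)<\dim\mathbb{U}(B_0)$. The paper argues locally: it fixes two distinct indices $j_1,j_2$, substitutes $\sum_r a_r^2=\sum_r m_{j_1,r}^2/\mu_{s_1}(j_1)^2$ and $\sum_r b_r^2=\sum_r m_{j_2,r}^2/\mu_{s_2}(j_2)^2$, and bounds term by term against $\sum_{j\neq j_1,j_2}d_j^2+d_{j_1}^2\bigl(1-1/\mu_{s_1}(j_1)^2\bigr)+d_{j_2}^2\bigl(1-1/\mu_{s_2}(j_2)^2\bigr)$, bookkeeping which single estimate is strict under (a) versus (b). You instead sum globally, using $\sum_{j,r}m_{j,r}^2=S_1\sum_r a_r^2=S_2\sum_r b_r^2$ with $S_s=\sum_j\mu_s(1,j)^2$, and reduce the whole condition \emph{exactly} to $K(T-p_1^2)<0$ with $T=\sum_r a_r^2<p_1^2$ (strict since all $a_r\geq 1$ and $l\geq 2$). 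Your route buys an equivalence rather than an estimate: density via this criterion holds if and only if $(S_1-1)(S_2-1)>1$, hypotheses (a) and (b) force this at a glance, and the unique boundary point $(S_1-1)(S_2-1)=1$ is exactly $M_2*_{\mathbb{C}^2}M_2$, so your computation also exhibits the sharpness of hypothesis (iii), which the paper only records separately through Proposition~\ref{Prop:ExmapleM2M2}. What the paper's two-index template buys in exchange is reusability: the same local maneuver recurs essentially verbatim in Propositions~\ref{Prop:PrimitivityOnlyOnes} and~\ref{Prop:PrimitivityRankOneMatrixAlgCase}, where the algebras $B_1,B_2$ are no longer simple and a clean closed-form reduction like yours is not available. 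Your handling of the easy bound is also fine: substituting $p_2^2=p_1^2S_1/S_2$ turns $p_1^2+p_2^2\leq p_1^2S_1$ into $(S_1-1)(S_2-1)\geq 1$, which holds since unitality of the $\gamma_s$ gives $S_s\geq l_0\geq 2$.
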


\begin{proof}

First of all we notice that $M_{n_1}*_DM_{n_2}$ always satisfies the LP condition.

Let $B_0=\pi^{(0)}(D)'$  and for $s=1,2$, let $B_s=\pi^{(s)}(M_{n_s})'$. Then $B_s$ 
is $*$-isomorphic to $M_{p_s}$ and $B_0$ is $*$-isomorphic to $\oplus_{j=1}^{l_0}M_{d_j}$
where $d_j=\mu_1(1,j)p_1=\mu_2(1,j)p_2$. To simplify we denote $\mu_s(j)=\mu_s(1,j)$.

To show $\pi$ is DPI we will use Proposition \ref{Prop:DensityHardVersion}. Thus, we need to show

\begin{equation}\label{Eqn:MatrixMatrixEasy}
\dim(\mathbb{U}(B_1))+\dim(\mathbb{U}(B_2))\leq \mathbb{U}(B_0)
\end{equation}
and
\begin{equation}\label{Eqn:MatrixMatrixHard}
\left. 
\begin{array}{ll}
\hspace{0.4cm} \dim(\mathbb{U}(B_1)) - \dim(\mathbb{U}(B_1\cap C'))  &   \\
  +\dim(\mathbb{U}(B_2))-  \dim(\mathbb{U}(\Ad u(B_2)\cap C')) &   <  \dim(\mathbb{U}(B_0))\\
  +\dim(\mathbb{U}(B_0\cap C'))  &  \\
\end{array}
\right\}
\end{equation}
where $C$ is an abelian unital C*-subalgebra of $B_1$ with $\dim(C)=l \geq 2$, $C'$ is the commutant relative to $\mathbb{B}(H\oplus \hat{H})$ and $u\in \mathbb{U}(B_0)$ is such that $C\subseteq Ad u(B_2)$.

Firstly, we prove inequality (\ref{Eqn:MatrixMatrixEasy}).  Notice that
$\dim(\mathbb{U}(B_1))+\dim(\mathbb{U}(B_2))=p_1^2+p_2^2$
 and $\dim(\mathbb{U}(B_0))=\sum_{j=1}^{l_0}d_j^2$. Taking
 into account that  $d_j=\mu_1(j)p_1=\mu_2(j)p_2$ is clear that if either $l_0\geq 3$ or  $\mu_{s_1}(j_1) \geq 2$,
(\ref{Eqn:MatrixMatrixEasy}) holds.

To prove (\ref{Eqn:MatrixMatrixHard}) write

\begin{eqnarray*}
\mu(B_1,C)&=&[a_{r}]_{1\leq r \leq l },\\
\mu(B_2,\Ad u(C))&=&[b_{r}]_{1\leq r \leq l},\\
\mu(B_0,C)&=&[m_{j,r}]_{1\leq j \leq l_0, 1\leq r \leq  l}.
\end{eqnarray*}

Thus (\ref{Eqn:MatrixMatrixHard}) becomes

\[
\sum_{j=1}^{l_0}\sum_{r=1}^lm_{j,r}^2-\sum_{r=1}^la_{r}^2-\sum_{r=1}^l b_{r}^2 <
\sum_{j=1}^{l_0}d_j^2-p_1^2-p_2^2.
\]

For simplicity, define $s_2=2$ if $s_1=1$ and $s_2=1$ if $s_1=2$. Now pick $j_2\not= j_1$. 
Since $\mu_1(j)a_r=m_{j,r}=\mu_2(j)b_2$ we can write 
\begin{eqnarray*}
\sum_{j=1}^{l_0}\sum_{r=1}^lm_{j,r}^2-\sum_{r=1}^la_{r}^2-\sum_{r=1}^l b_{r}^2&=& 
\sum_{\substack{1\leq j \leq l_0 \\ j \not= j_1,j_2} }\sum_{r=1}^l m_{j,r}^2 \\
&+& \sum_{r=1}^l m_{j_1,r}^2\bigg(1-\frac{1}{\mu_{s_1}(j_1)^2 }\bigg) \\
&+& \sum_{r=1}^l m_{j_2,r}^2\bigg(1-\frac{1}{\mu_{s_2}(j_2)^2}\bigg).
\end{eqnarray*}

Now, taking into account that  $\sum_{r=1}^lm_{j,r}=d_j$ we deduce
\begin{equation}\label{Eqn:l0Big}
\sum_{\substack{1\leq j \leq l_0 \\ j \not= j_1,j_2} }\sum_{r=1}^l m_{j,r}^2 \leq \sum_{\substack{1\leq j \leq l_0 \\ j \not= j_1,j_2} }  d_{j}^2.
\end{equation}

Notice that if $l_0 \geq 3$, the inequality in (\ref{Eqn:l0Big}) is strict, since $l\geq 2$.

Similarly  we obtain
\begin{equation}\label{Eqn:MultiplicityBig}
\sum_{r=1}^l m_{j_1,r}^2\bigg(1-\frac{1}{\mu_{s_1}(j_1)^2}\bigg) \leq  d_{j_1}\bigg(1-\frac{1}{\mu_{s_1}(j_1)^2}\bigg). 
\end{equation}

Also notice that if $\mu_{s_1}(j_1) \geq 2$, then the inequality in (\ref{Eqn:MultiplicityBig}) is strict.

Since the term $1-/\mu_{s_2}(j_2)$ can be zero, we can only assert that
\[
\sum_{r=1}^l m_{j_2,r}^2\bigg(1-\frac{1}{\mu_{s_2}(j_2)^2}\bigg) \leq  d_{j_2}\bigg(1-\frac{1}{\mu_{s_2}(j_2)^2}\bigg) .
\]

We conclude that, if either $l_0 \geq 3$ or $\mu_{s_1}(j_1) \geq 2$ then
\begin{eqnarray*}
\sum_{j=1}^{l_0}\sum_{r=1}^lm_{j,r}^2-\sum_{r=1}^la_{r}^2-\sum_{r=1}^l b_{r}^2 &<& \sum_{\substack{1\leq j \leq l_0 \\ j \not= j_1,j_2} }  d_{j}^2 \\
&+&  d_{j_1}\bigg(1-\frac{1}{\mu_{s_1}(j_1)^2}\bigg) \\
&+&  d_{j_2}\bigg(1-\frac{1}{\mu_{s_2}(j_2)^2}\bigg)\\ &=&\sum_{j=1}^{l_0}d_j^2-p_1^2-p_2^2.
\end{eqnarray*}

\end{proof}

As a direct consequence we have the following corollary.

\begin{coro}\label{Coro:MatrxMatrxPrimitivityOneBigmu}
Assume 
\begin{enumerate}[(i)]
\item $A_1\simeq M_{n_1}$, $A_2\simeq M_{n_2}$, $n_1,n_2\geq 2$,
\item there are faithful traces on $A_1$ and $A_2$ whose restrictions agree on $D$,
\item $\dim(D)=l_0 \geq 2$,
\item either
\begin{enumerate}[(a)]
\item there is $s_1\in \{1,2\}$ and $j_1\in [l_0]$ such that $\mu_{s_1}(1,j_1) \geq 2$, 
\item $l_0 \geq 3$.
\end{enumerate}
\end{enumerate}
Then $A_1*_DA_2$ is primitive.
\end{coro}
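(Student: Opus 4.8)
The plan is to obtain the corollary as an immediate consequence of Proposition~\ref{MatrixMatrix} and the completion criterion Lemma~\ref{Lemma:CompletionCriteria}, after reducing to the case of abelian $D$.

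First I would pass to abelian $D$. By Corollary~\ref{Coro:ReductionTrivialCenter} it suffices to prove that $pA_1p *_{pDp} pA_2p$ is primitive, where $p$ is the sum of minimal projections, one chosen in each direct summand of $D$. Since $A_s\simeq M_{n_s}$ is simple, each corner $pA_sp$ is again a full matrix algebra, say $M_{r_s}$, while $pDp\simeq\mathbb{C}^{l_0}$ is abelian. By Lemma~\ref{Lemma:CompresionSameMatrix} this compression leaves the matrices of partial multiplicities unchanged, $\mu(pA_sp,pDp)=\mu_s$, so hypothesis (iv) is inherited verbatim. Moreover, since $\gamma_s$ is unital and injective every entry of $\mu_s$ is at least $1$, whence $r_s=\sum_{j=1}^{l_0}\mu_s(1,j)\ge l_0\ge 2$; thus the size condition $n_1,n_2\ge 2$ persists for the compressed algebras.

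Next I would note that the compressed product is RFD. Hypothesis (ii) together with the results of \cite{Dykema&Exel&Etal-EmbeddingsFullFreeProd} show that $A_1*_DA_2$ is RFD, which by Lemma~\ref{Lemma:RFDMatricesMultiplicities} amounts to the existence of $p_s\in\mathbb{Z}_+^{l_s}$ with $\mu_1^tp_1=\mu_2^tp_2$. As the matrices $\mu_s$ are unaffected by the compression, the same lemma yields that $pA_1p*_{pDp}pA_2p$ is RFD as well.

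With these reductions, the compressed product satisfies all hypotheses of Proposition~\ref{MatrixMatrix}, so every unital finite dimensional $*$-representation of it is DPI. In particular, given any such $\pi$ we may take $\hat\pi=\pi$ and observe that $\pi\oplus\hat\pi$ is again a unital finite dimensional $*$-representation, hence DPI. The hypotheses of Lemma~\ref{Lemma:CompletionCriteria} are therefore satisfied and $pA_1p*_{pDp}pA_2p$ is primitive; by Corollary~\ref{Coro:ReductionTrivialCenter}(i), so is $A_1*_DA_2$. Since the substantive analytic work is already carried out in Proposition~\ref{MatrixMatrix}, I expect no real obstacle here: the only point demanding care is the bookkeeping of the reduction to abelian $D$---verifying that compression neither violates $n_s\ge 2$ nor alters the multiplicity data underlying hypothesis (iv).
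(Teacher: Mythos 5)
Your proposal is correct and takes essentially the same route as the paper: the corollary is stated there as a direct consequence of Proposition~\ref{MatrixMatrix} (every unital finite dimensional $*$-representation is DPI) combined with the RFD hypothesis and the completion criterion Lemma~\ref{Lemma:CompletionCriteria}, under the standing reduction to abelian $D$ from Section~2. Your explicit bookkeeping of that reduction---that compressing by $p$ turns $M_{n_s}$ into $M_{r_s}$ with $r_s=\sum_{j}\mu_s(1,j)\geq l_0\geq 2$, leaves $\mu_s$ unchanged by Lemma~\ref{Lemma:CompresionSameMatrix}, and preserves RFD via Lemma~\ref{Lemma:RFDMatricesMultiplicities}---is precisely the verification the paper leaves implicit, and your choice $\hat\pi=\pi$ in the completion criterion is sound since Proposition~\ref{MatrixMatrix} applies to every unital finite dimensional $*$-representation.
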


Finally, the only case not covered is $M_2*_{\mathbb{C}^2}M_2$, which is not primitive by
Proposition  \ref{Prop:ExmapleM2M2}.

The shortest way to put all these results together is in the following theorem, whose proof uses
the reductions in section 2 (specially Corollary \ref{Coro:ReductionTrivialCenter} ) and all cases covered in this last section.

\begin{thm}
Let  $A_1,A_2$ and $D$ be finite dimensional C*-algebras.

Assume
\begin{enumerate}[(i)]
\item there are faithful traces on $A_1$ and $A_2$ whose restrictions to $D$ agree,
\item the ranks of $\mu_1$ and $\mu_2$ are both 1.
\end{enumerate}

Then $A_1*_DA_2$ is primitive if and only if its center is trivial.
\end{thm}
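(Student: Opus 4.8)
The plan is to prove the two implications separately, the reverse one being essentially an assembly of the case work developed throughout this section, preceded by the reduction to abelian base.

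The forward implication needs nothing beyond the standard fact about primitive algebras. Assuming $A_1*_DA_2$ is primitive, I would fix a faithful irreducible $*$-representation $\pi$. Any element of the center is carried by $\pi$ into the commutant of $\pi(A_1*_DA_2)$, which by irreducibility equals $\mathbb{C}$; since $\pi$ is faithful and isometric, the center of $A_1*_DA_2$ must itself be $\mathbb{C}$. So primitivity forces a trivial center, with no use of the rank-one or trace hypotheses.

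For the reverse implication I would first reduce to abelian $D$. Letting $p$ be the sum of one minimal projection from each direct summand of $D$, Corollary \ref{Coro:ReductionTrivialCenter} shows it suffices to treat $pA_1p*_{pDp}pA_2p$, whose base $pDp$ is abelian; Lemma \ref{Lemma:CompresionSameMatrix} guarantees that the multiplicity matrices, hence $l_0,l_1,l_2$ and the rank-one condition, are unchanged, while the given faithful traces restrict to faithful traces on the corners $pA_sp$ that still agree on $pDp$. Thus I may assume $D\cong\mathbb{C}^{l_0}$ from the start. The core claim is then that, under rank-one $\mu_1,\mu_2$, the product $A_1*_DA_2$ is primitive in every configuration except two, and that those two have non-trivial center. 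If $l_0=1$ this is precisely the main theorem of \cite{Dykema&Torres}, whose only non-primitive (equivalently, non-trivial-center) instance is $\mathbb{C}^2*_{\mathbb{C}}\mathbb{C}^2$. If $l_0\geq 2$, the LP condition holds automatically by Lemma \ref{Lemma:RankOneLP}, so Proposition \ref{Prop:PrimitivityRankOneEasyCase} yields primitivity outside the four configurations of Remark \ref{Remark:CasesRankOne}: configurations (i) and (ii) are handled by Proposition \ref{Prop:PrimitivityRankOneMatrixAlgCase} apart from the single leftover $M_2*_{\mathbb{C}^2}(M_2\oplus M_2)$, configuration (iii) together with that leftover by Proposition \ref{Prop:PrimitivityOnlyOnes}, and configuration (iv) by Corollary \ref{Coro:MatrxMatrxPrimitivityOneBigmu}. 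Tracking the boundary values of the inequalities in these statements shows the unique unproven case is $M_2*_{\mathbb{C}^2}M_2$, which by Proposition \ref{Prop:ExmapleM2M2} is isomorphic to $M_2(C(\mathbb{T}))$ and so has center $C(\mathbb{T})$.

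Finally I would close the loop. Both exceptional algebras $\mathbb{C}^2*_{\mathbb{C}}\mathbb{C}^2$ and $M_2*_{\mathbb{C}^2}M_2$ have non-trivial center, so by the contrapositive of Corollary \ref{Coro:ReductionTrivialCenter}(ii), if the original $A_1*_DA_2$ has trivial center then its compression has trivial center as well; the compression is therefore neither exceptional case, hence is primitive by the case analysis above, and Corollary \ref{Coro:ReductionTrivialCenter}(i) transfers primitivity back to $A_1*_DA_2$. I expect the main obstacle to be bookkeeping rather than analysis, namely verifying that the four configurations of Remark \ref{Remark:CasesRankOne}, combined with the exact boundary behaviour of the inequalities in Proposition \ref{Prop:PrimitivityRankOneMatrixAlgCase} and Corollary \ref{Coro:MatrxMatrxPrimitivityOneBigmu}, leave exactly $M_2*_{\mathbb{C}^2}M_2$ uncovered, and confirming that this algebra together with the $l_0=1$ exception are precisely the ones with non-trivial center, so that the stated equivalence is sharp.
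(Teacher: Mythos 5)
Your proposal is correct and follows essentially the same route as the paper, which proves this theorem precisely by combining the reductions of Section 2 (notably Corollary \ref{Coro:ReductionTrivialCenter} and Lemma \ref{Lemma:CompresionSameMatrix}) with the rank-one case analysis of the final section (Proposition \ref{Prop:PrimitivityRankOneEasyCase}, Remark \ref{Remark:CasesRankOne}, Propositions \ref{Prop:PrimitivityRankOneMatrixAlgCase} and \ref{Prop:PrimitivityOnlyOnes}, Corollary \ref{Coro:MatrxMatrxPrimitivityOneBigmu}), leaving $M_2*_{\mathbb{C}^2}M_2$ (Proposition \ref{Prop:ExmapleM2M2}) and, for $l_0=1$, $\mathbb{C}^2*_{\mathbb{C}}\mathbb{C}^2$ as the only non-primitive cases, both with non-trivial center. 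Your added details --- the standard forward implication and the contrapositive use of Corollary \ref{Coro:ReductionTrivialCenter}(ii) to rule out the exceptional compressions --- are exactly the bookkeeping the paper leaves implicit.
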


\subsection{Conjecture} 

From  our previous examples, it seems tempting to conjecture that the only
obstruction for a unital full free product of finite dimensional C*-algebras to be primitive is to have a non trivial center. Corollary
\ref{CoroPrimitivityBiggerThan2} says somehow that if there is a lot of gluing, we obtain a primitive C*-algebra, but
for subtle cases i.e. when the multiplicities are one, there seem to be a wealth of algebras that, so far, we can not manage
to classify. 

As supporting evidence for this conjecture, we present some test cases of the type $A_1*_{\mathbb{C}^2}A_2$, where $A_2$
is abelian and $A_1$ has minimum requirements so that $A_1*_{\mathbb{C}^2}A_2$ satisfies the LP condition.  We choose $A_2$ to be abelian because in this case
all the entries of $\mu_2$ are either 1 or 0, cases not covered so far. On the other hand, if we take $A_1$ abelian, the C*-algebra $A_1*_{\mathbb{C}^2}A_2$ will have a non-trivial center and then  not primitive. Thus the simplest case is to take $A_1$ to be
$M_2$. Hence, we are looking at algebras of the form 
\begin{equation}\label{Eqn:TestAlgebras}
 M_2 *_{\mathbb{C}^2}(\mathbb{C}^{N_2(1)}\oplus \mathbb{C}^{N_2(2)})
\end{equation}
$N_2(1), N_2(2) \geq 1$, where the inclusions are induced  by
\begin{eqnarray*}
\gamma_1(1,0)&=&  
\left[
\begin{array}{cc} 1 & 0 \\
0 & 0 
\end{array}
\right],\\
\gamma_1(0,1) &=&  
\left[
\begin{array}{cc} 0 & 0 \\
0 & 1
\end{array}
\right]
\end{eqnarray*}
and
\begin{eqnarray*}
\gamma_2(1,0)&=&(1,0)^{(N_2(1))},\\
\gamma_2(0,1)&=&(0,1)^{(N_2(2))}.
\end{eqnarray*}

The case $N_2(1)=N_2(2)=1$ is trivial so we start with $N_2(1)=1$ and $N_2(2) \geq 2$.

\begin{prop}
\[
M_2*_{\mathbb{C}^2}(\mathbb{C}\oplus \mathbb{C}^{N_2(2)})\simeq M_2(\mathbb{C}^{N_2(2)})
\]
where $N_2(2)\geq 2$. Since $\mathbb{C}^{N_2(2)}$ has a non-trivial center, so does  the
C*-algebra $M_2*_{\mathbb{C}^2}(\mathbb{C}\oplus \mathbb{C}^{N_2(2)})$.

\end{prop}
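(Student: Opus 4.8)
The plan is to follow the strategy of Proposition \ref{Prop:ExmapleM2M2} and realize the amalgamated product as a matrix algebra over its relative commutant. Write $A=M_2*_{\mathbb{C}^2}(\mathbb{C}\oplus\mathbb{C}^{N_2(2)})$, let $\iota_1,\iota_2$ be the canonical inclusions, denote by $f_0$ the minimal projection of the $\mathbb{C}^{N_2(1)}=\mathbb{C}$ summand of $A_2$ and by $f_1,\dots,f_{N_2(2)}$ the minimal projections of the $\mathbb{C}^{N_2(2)}$ summand, so that the amalgamation reads $\iota_2(f_0)=\iota_1(E_{1,1})$ and $\iota_2(f_1+\cdots+f_{N_2(2)})=\iota_1(E_{2,2})$. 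Since $\iota_1(M_2)$ is a unital copy of $M_2$ inside $A$, the argument of Proposition \ref{Prop:ExmapleM2M2} produces a $*$-isomorphism $\varphi:A\to M_2(B)$, where $B=\iota_1(M_2)'\cap A$ and $\varphi(a)[i,j]=\sum_{r}\iota_1(E_{r,i})\,a\,\iota_1(E_{j,r})$; under $\varphi$ one has $\varphi(\iota_1(x))=x\otimes 1_B$ for $x\in M_2$.

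Next I would compute the images $\varphi(\iota_2(f_k))$. Because $\iota_2(f_0)=\iota_1(E_{1,1})$ already lies in $\iota_1(M_2)$, the algebra $A$ is generated by $\iota_1(M_2)$ together with $\iota_2(f_1),\dots,\iota_2(f_{N_2(2)})$. For $1\le k\le N_2(2)$ the projection $\iota_2(f_k)$ is dominated by $\iota_1(E_{2,2})$, hence $\iota_1(E_{1,1})\iota_2(f_k)=0$ and $\iota_1(E_{2,2})\iota_2(f_k)=\iota_2(f_k)$; substituting these relations into the formula for $\varphi$ shows that every entry of $\varphi(\iota_2(f_k))$ vanishes except the $(2,2)$ one, so that $\varphi(\iota_2(f_k))=E_{2,2}\otimes b_k$ with $b_k:=\sum_{r}\iota_1(E_{r,2})\iota_2(f_k)\iota_1(E_{2,r})\in B$. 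Since the $f_k$ are orthogonal projections with $\sum_k f_k=\gamma_2(0,1)$, applying $\varphi$ shows that $b_1,\dots,b_{N_2(2)}$ are mutually orthogonal projections with $\sum_k b_k=1_B$. Conjugating $E_{2,2}\otimes b_k$ by the matrix units $E_{i,j}\otimes 1$ yields all of $M_2\otimes b_k$, so the subalgebra of $M_2(B)$ generated by $M_2\otimes 1$ and the $E_{2,2}\otimes b_k$ equals $M_2\otimes C^*(b_1,\dots,b_{N_2(2)})$; as $\varphi$ is onto $M_2(B)$, this forces $B=C^*(b_1,\dots,b_{N_2(2)})$.

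It then remains to see that $B\simeq\mathbb{C}^{N_2(2)}$, i.e. that none of the orthogonal projections $b_k$ is zero, which amounts to $\iota_2(f_k)\neq 0$ in $A$ for each $k$. I would verify this by exhibiting, for each $k$, a two-dimensional $*$-representation $\rho_k$ of $A$ with $\rho_k(\iota_2(f_k))\neq 0$: take $\rho_k^{(1)}=\id_{M_2}$ and let $\rho_k^{(2)}$ send $f_0\mapsto E_{1,1}$, $f_k\mapsto E_{2,2}$ and $f_j\mapsto 0$ for $j\neq 0,k$. These agree on $\mathbb{C}^2$, so the universal property of the full amalgamated product yields $\rho_k:A\to M_2$ with $\rho_k(\iota_2(f_k))=E_{2,2}\neq 0$. (Alternatively, $\iota_2$ is faithful because $A$ is RFD: the normalized trace $\tr$ on $M_2$ and the trace on $A_2$ assigning weight $1/2$ to $f_0$ and $1/(2N_2(2))$ to each $f_k$, $k\ge 1$, are faithful and agree on $D$.) With all $b_k\neq 0$, the orthogonal projections $b_k$ summing to $1_B$ give $B=C^*(b_1,\dots,b_{N_2(2)})\simeq\mathbb{C}^{N_2(2)}$, whence $A\simeq M_2(\mathbb{C}^{N_2(2)})$; since $N_2(2)\ge 2$ this algebra has nontrivial center.

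The main obstacle is the bookkeeping in the second step: one must justify carefully, using only the amalgamation relations $\iota_2(f_0)=\iota_1(E_{1,1})$ and $\sum_{k\ge 1}\iota_2(f_k)=\iota_1(E_{2,2})$, that each $\varphi(\iota_2(f_k))$ is supported in the $(2,2)$ corner and that the resulting $b_k$ generate $B$ and nothing more. The nonvanishing of the $b_k$ is then the only point at which the specific (full, trace-compatible) nature of the product enters.
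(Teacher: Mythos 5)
Your proof is correct, but it takes a genuinely different route from the paper's. The paper proves this proposition by verifying the universal property directly: it defines explicit unital embeddings $\iota_1:M_2\to M_2(\mathbb{C}^{N_2(2)})$ and $\iota_2:\mathbb{C}\oplus\mathbb{C}^{N_2(2)}\to M_2(\mathbb{C}^{N_2(2)})$, and then, given any pair $\varphi_1,\varphi_2$ agreeing on $\mathbb{C}^2$, constructs the factoring morphism $\varphi$ through $M_2(\mathbb{C}^{N_2(2)})$ by cutting $\varphi_2$ to the corner $E_{2,2}$ and tensoring with the matrix units $E_{i,j}=\varphi_1(e_{i,j})$. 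You instead work inside the abstract free product and compute the relative commutant $B=\iota_1(M_2)'\cap A$, which is the method the paper uses for $M_2*_{\mathbb{C}^2}M_2$ in Proposition \ref{Prop:ExmapleM2M2} but \emph{not} for this statement. The trade-off: the paper's argument must guess the answer in advance, but then nondegeneracy is automatic since the target is concrete (it does leave implicit the routine check that $\iota_1,\iota_2$ generate $M_2(\mathbb{C}^{N_2(2)})$, needed for uniqueness of the factorization); your argument discovers the answer, and makes visible why the mechanism of Proposition \ref{Prop:ExmapleM2M2} degenerates here --- the off-diagonal unitary $z$ generating $C(\mathbb{T})$ is replaced by diagonal orthogonal projections $b_1,\dots,b_{N_2(2)}$ with prescribed sum $1_B$, collapsing $B$ to $\mathbb{C}^{N_2(2)}$ --- at the cost of a separate nondegeneracy step $b_k\neq 0$, which you settle correctly with the two-dimensional representations $\rho_k$ (this is exactly where fullness of the free product enters). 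Your corner computation is sound: $\iota_2(f_k)\leq\iota_1(E_{2,2})$ kills all entries of $\varphi(\iota_2(f_k))$ except the $(2,2)$ one, and surjectivity of $\varphi$ forces $B=\operatorname{span}\{b_1,\dots,b_{N_2(2)}\}$ since the span of $\{E_{i,j}\otimes b_k\}$ is already a C*-algebra containing $M_2\otimes 1$. One small caveat: your parenthetical alternative is imprecise as stated --- residual finite dimensionality of $A$ by itself does not yield faithfulness of $\iota_2$; it is the existence of the compatible faithful traces (or, more simply, your explicit representations $\rho_k$ summed over $k$) that gives injectivity of $\iota_2$. Since the $\rho_k$ argument is complete on its own, this does not affect the proof.
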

\begin{proof}
We are to show that $M_2(\mathbb{C}^{N_2(2)})$ has the universal property characterizing 
$M_2*_{\mathbb{C}^2}(\mathbb{C}\oplus \mathbb{C}^{N_2(2)})$. 

For simplicity write $n=N_2(2)$. Let $\{e_{i,j}\}$ be a matrix unit
for $M_2$, $\{e_s\}_{s=1}^{n}$ denote a set of minimal projections of
$\mathbb{C}^n$ and define $\iota_1:M_2\to M_2(\mathbb{C}^n)$ by
\[
\iota_2\bigg(\sum_{i,j}e_{i,j}\otimes x_{i,j} \bigg)=\sum_{i,j}e_{i,j}\otimes x_{i,j}1_{\mathbb{C}^n}
\]
and $\iota_2:\mathbb{C}\oplus \mathbb{C}^n\to M_2(\mathbb{C}^n) $ by
\[
\iota_2(x,y_1,\dots, y_n)=e_{1.1}\otimes x1_{\mathbb{C}^n} + e_{2,2}\otimes (y_1,\dots, y_n).
\]

Now we take $\varphi_1:M_2\to \mathbb{B}(H)$ and $\varphi_2:\mathbb{C}\oplus \mathbb{C}^n \to \mathbb{B}(H)$
such that $\varphi_1 \circ\gamma_1=\varphi_2 \circ\gamma_2$ and we will construct
$\varphi:M_2(\mathbb{C}^n) \to \mathbb{B}(H)$ such that $\varphi\circ \iota_i=\varphi_i$.

Let $E_{i,j}=\varphi_1(e_{i,j})$. We may assume $H=K\oplus K$, where $K=E_{i,i}(H)$. Thus
\[
\varphi_1\bigg( \sum_{i,j}e{i,j}\otimes x_{i,j}\bigg)=\sum_{i,j}E_{i,j}\otimes x_{i,j}id_K.
\] 

Since $\varphi_1\circ \gamma_1=\varphi_2\circ \gamma_2$, $E_{1,1}=\varphi_2(e_1)$ and 
$\sum_{s=2}^n \varphi_2(e_s)=E_{2,2}$. Define $\alpha:\mathbb{C}^n \to \mathbb{B}(K)$
the unital $*$-homomorphism induced by $\alpha(e_s)= E_{2,2}\varphi(e_s)E_{2,2}$.
Thus
\[
E_{1,1}\otimes xid_K+ E_{2,2}\otimes \alpha(y_1,\dots y_n)=\varphi_2(x,y_1,\dots y_n).
\]
Define
$\varphi:M_2(\mathbb{C}^n)\to \mathbb{B}(K)$ by
\[
\varphi\bigg(\sum_{i,j}e_{i,j}\otimes a_{i,j} \bigg)=\sum_{i,j}E_{i,j}\otimes \alpha(a_{i,j}).
\]

Then 
\begin{eqnarray*}
\varphi\bigg( \iota_1\bigg( \sum_{i,j}e_{i,j}\otimes x_{i,j} \bigg)\bigg)&=&
\varphi\bigg( \sum_{i,j}e_{i,j}\otimes x_{i,j}1_{\mathbb{C}^n}  \bigg)\\
&=&\sum_{i,j}E_{i,j}\otimes x_{i,j}id_K\\
&=&\varphi_1\bigg( \sum_{i,j}e_{i,j}\otimes x_{i,j}\bigg)
\end{eqnarray*}
and

\begin{eqnarray*}
\varphi(\iota_2(x,y_1,\dots, y_n))&=&\varphi(e_{1,1}\otimes x1_{\mathbb{C}^n} +e_{2,2}\otimes(y_1,\dots y_n) )\\
&=& E_{1,1}\otimes xid_K+ E_{2,2}\otimes\alpha(y_1,\dots y_n)\\
&=&\varphi_2(x,y_1,\dots ,y_n).  
\end{eqnarray*}

\end{proof}

The next natural step is to take  $N_2(1)=2$. It turns out that if $N_2(2)=2$ we don't get a primitive C*-algebra. 
Curiously, $N_2(2)\geq 3$ will produce primitive C*-algebras. This is proved in the next two propositions.

\begin{prop}
\[
M_2*_{\mathbb{C}^2}(\mathbb{C}^2\oplus \mathbb{C}^2)\simeq M_2(\mathbb{C}^2*_\mathbb{C}\mathbb{C}^2)
\]
Since $\mathbb{C}^2*_\mathbb{C}\mathbb{C}^2$ has a non-trivial center, so does $M_2*_{\mathbb{C}^2}(\mathbb{C}^2\oplus \mathbb{C}^2)$.
\end{prop}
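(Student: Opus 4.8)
The plan is to verify that $M_2(\mathbb{C}^2*_{\mathbb{C}}\mathbb{C}^2)$ enjoys the universal property characterizing $M_2*_{\mathbb{C}^2}(\mathbb{C}^2\oplus\mathbb{C}^2)$, exactly as in Proposition \ref{Prop:ExmapleM2M2} and the previous proposition. Write $E=\mathbb{C}^2*_{\mathbb{C}}\mathbb{C}^2$ and recall that $E$ is the universal unital C*-algebra generated by two projections; call them $f$ and $g$, so that the two copies of $\mathbb{C}^2$ sitting inside $E$ are $C^*(1,f)$ and $C^*(1,g)$. Let $\{e_{ij}\}$ denote the matrix units of $M_2$, and let $q_1,q_2$ (resp. $q_3,q_4$) be the minimal projections of the first (resp. second) $\mathbb{C}^2$ summand of $A_2=\mathbb{C}^2\oplus\mathbb{C}^2$.

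First I would exhibit the two structural inclusions into $M_2(E)=M_2\otimes E$. Define $\iota_1\colon M_2\to M_2(E)$ by $\iota_1(x)=x\otimes 1$, and define $\iota_2\colon\mathbb{C}^2\oplus\mathbb{C}^2\to M_2(E)$ on minimal projections by $\iota_2(q_1)=e_{11}\otimes f$, $\iota_2(q_2)=e_{11}\otimes(1-f)$, $\iota_2(q_3)=e_{22}\otimes g$ and $\iota_2(q_4)=e_{22}\otimes(1-g)$. With $\gamma_1,\gamma_2$ as in the statement one checks immediately that $\iota_1\gamma_1=\iota_2\gamma_2$, since both send $(1,0)\mapsto e_{11}\otimes 1$ and $(0,1)\mapsto e_{22}\otimes 1$. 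Next I would confirm that $\iota_1(M_2)$ and $\iota_2(\mathbb{C}^2\oplus\mathbb{C}^2)$ generate $M_2(E)$: conjugating $e_{11}\otimes f$ and $e_{22}\otimes g$ by the various $e_{ij}\otimes 1$ produces $e_{ij}\otimes f$ and $e_{ij}\otimes g$ for all $i,j$, and since $E=C^*(1,f,g)$ this yields all of $M_2\otimes E$.

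The heart of the argument is the universal property. Given $\varphi_1\colon M_2\to\mathbb{B}(H)$ and $\varphi_2\colon\mathbb{C}^2\oplus\mathbb{C}^2\to\mathbb{B}(H)$ with $\varphi_1\gamma_1=\varphi_2\gamma_2$, set $E_{ij}=\varphi_1(e_{ij})$ and $H_i=E_{ii}H$, so that $\mathbb{B}(H)\cong M_2(\mathbb{B}(H_1))$ and $\varphi_1$ becomes $x\mapsto x\otimes 1$. The compatibility $\varphi_2(q_1+q_2)=E_{11}$ and $\varphi_2(q_3+q_4)=E_{22}$ forces $\varphi_2$ of the first $\mathbb{C}^2$ into $E_{11}\mathbb{B}(H)E_{11}=\mathbb{B}(H_1)$ and of the second into $E_{22}\mathbb{B}(H)E_{22}$. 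I would then produce two projections on $H_1$: $F=\varphi_2(q_1)$, coming directly from the first $\mathbb{C}^2$, and $G=E_{12}\varphi_2(q_3)E_{21}$, obtained by transporting the corresponding projection of the second $\mathbb{C}^2$ across the matrix units. The universal property of $E$ yields a unique unital $*$-homomorphism $\psi\colon E\to\mathbb{B}(H_1)$ with $\psi(f)=F$ and $\psi(g)=G$, and then $\varphi=\id_{M_2}\otimes\psi$, read through $\mathbb{B}(H)\cong M_2(\mathbb{B}(H_1))$, is the required extension. A short computation with the $E_{ij}$ verifies $\varphi\iota_1=\varphi_1$ and $\varphi\iota_2=\varphi_2$ (for instance $\varphi\iota_2(q_3)=E_{21}GE_{12}=\varphi_2(q_3)$), and uniqueness follows from the generation statement above. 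This establishes $M_2(E)\cong M_2*_{\mathbb{C}^2}(\mathbb{C}^2\oplus\mathbb{C}^2)$. Finally, since $E=\mathbb{C}^2*_{\mathbb{C}}\mathbb{C}^2$ has nontrivial center (as recalled after Proposition \ref{Prop:PrimitivityOnlyOnes}), the center of $M_2\otimes E$ is $1\otimes Z(E)\neq\mathbb{C}$, whence $M_2*_{\mathbb{C}^2}(\mathbb{C}^2\oplus\mathbb{C}^2)$ has nontrivial center as claimed. The one delicate point is the matrix-unit bookkeeping that moves $\varphi_2(q_3)$ from the second corner onto $H_1$ and then checks that $\varphi\iota_2$ reproduces $\varphi_2$ on the second $\mathbb{C}^2$; everything else is routine.
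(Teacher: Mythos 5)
Your proposal is correct and follows essentially the same route as the paper: both exhibit $M_2(\mathbb{C}^2*_{\mathbb{C}}\mathbb{C}^2)$ as having the universal property of the amalgamated free product via the same inclusions $\iota_1,\iota_2$, with your projections $F=\varphi_2(q_1)$ and $G=E_{12}\varphi_2(q_3)E_{21}$ and the map $\psi$ (from the universal property of the C*-algebra generated by two projections) playing exactly the role of the paper's corner maps $\alpha_1,\alpha_2$ and their free product $\alpha=\alpha_1*\alpha_2$. If anything, you are slightly more careful than the paper, since you make explicit the matrix-unit conjugation identifying the $(2,2)$ corner with $\mathbb{B}(H_1)$ and you verify the generation statement needed for uniqueness of the extension, both of which the paper leaves implicit.
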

\begin{proof}

We will show that $M_2(\mathbb{C}^2*_\mathbb{C}\mathbb{C}^2)$ has the universal property characterizing 
$M_2*_{\mathbb{C}^2}(\mathbb{C}^2\oplus \mathbb{C}^2)$. 

Let $B=\mathbb{C}^2*_\mathbb{C}\mathbb{C}^2$,
 $j_1,j_2$ will denote the inlclusions from $\mathbb{C}^2$ into $B$
and $\{e_{i,j}\}$ will denote a matrix unit for $M_2$. Define 
$\iota_1:M_2 \to M_2(B)$ by 
\[
\iota_1\bigg(\sum_{i,j}e_{i,j}\otimes x_{i,j} \bigg)=\sum_{i,j}e_{i,j}\otimes (x_{i,j}1_{B})
\]
and $\iota_2:\mathbb{C}^2\oplus \mathbb{C}^2 \to M_2(B)$ by
\[
\iota_2(x_1,x_2,x_3,x_4)=e_{1,1}\otimes j_1(x_1,x_2)+ e_{2,2}\otimes j_2(x_3,x_4).
\]

Now we take $*$-homomorphisms $\varphi_1:M_2 \to \mathbb{B}(H)$, 
$\varphi_2:\mathbb{C}^2\oplus \mathbb{C}^2\to \mathbb{B}(H)$ such that 
$\varphi_1\circ\gamma_1=\varphi_2\circ\gamma_2$ and we wil construct a $*$-homomorphism
$\varphi:M_2(B)\to \mathbb{B}(H)$ such that $\varphi\circ \iota_i=\varphi_i$.

Let $E_{i,j}=\varphi_1(e_{i,j})$. We may assume $H=K\oplus K$, where $K=E_{i,i}(H)$. Thus
\[
\varphi_1\bigg( \sum_{i,j}e_{i,j}\otimes x_{i,j}\bigg)=\sum_{i,j}E_{i,j}\otimes (x_{i,j}id_K).
\]
Since $\varphi_1\circ \gamma_1=\varphi_2\circ \gamma_2$, $\varphi_2(1,1,0,0)=E_{1,1}$
and $\varphi(0,0,1,1)=E_{2,2}$.  

Define $\alpha_i:\mathbb{C}^2 \to \mathbb{B}(K)$ by
$\alpha_1(x,y)=E_{1,1}\varphi_1(x,y,0,0)E_{1,1}$, $\alpha_2(x,y)=E_{2,2}\varphi_2(0,0,x,y)E_{2,2}$.
Then $\alpha_1$ and $\alpha_2$ are unital $*$-homomorphisms, so we may take $\alpha:=\alpha_1*\alpha_2$.
Define $\varphi:M_2(B)\to \mathbb{B}(K\oplus K)$ by
\[
\varphi\bigg(\sum_{i,j}e_{i,j}\otimes b_{i,j} \bigg) =\sum_{i,j}E_{i,j}\otimes b_{i,j}.
\]

Then
\begin{eqnarray*}
\varphi\bigg( \iota_1\bigg( \sum_{i,j} e_{i,j}\otimes x_{i,j} \bigg) \bigg)
&=&\varphi\bigg(\sum_{i,j}e_{i,j}\otimes (x_{i,j}1_B) \bigg)\\
&=& \sum_{i,j}E_{i,j}\otimes (x_{i,j}\alpha(1_B))=\sum_{i,j}E_{i,j}\otimes (x_{i,j}id_K)\\
&=&\varphi_1 \bigg( \sum_{i,j}e_{i,j}\otimes x_{i,j}\bigg)
\end{eqnarray*} 

and

\begin{eqnarray*}
\varphi(\iota_2(x_1,x_2,x_3,x_4))&=&\varphi(e_{1,1}\otimes j_1(x_1,x_2)+e_{2,2}\otimes j_2(x_3,x_4))\\
&=& E_{1,1}\otimes \alpha(j_1(x_1,x_2))+E_{2,2}\otimes \alpha(j_2(x_3,x_4))\\
&=& E_{1,1}\otimes \alpha_1(x_1,x_2)+E_{2,2}\otimes \alpha_2(x_3,x_4)\\
&=&\varphi_2(x_1,x_2,0,0)+\varphi_2(0,0,x_3,x_4)\\
&=&\varphi_2(x_1,x_2,x_3,x_4).
\end{eqnarray*}
\end{proof}

To finish, we will prove that the C*-algebras $M_2*_{\mathbb{C}^2}(\mathbb{C}^{N_2(1)} \oplus \mathbb{C}^{N_2(2)})$, with $N_2(1)\geq 3$ and $N_2(2) \geq 2$ are primitive. 

For simplicity, $A_1=M_2 $ and $A_2=\mathbb{C}^{N_2(1)}\oplus \mathbb{C}^{N_2(2)}$. From our reductions, we only need to
show that given any $\pi$, unital, injective finite dimensional $*$-representation, there is $\hat{\pi}$, another unital, injective
finite dimensional $*$-representation such that $\pi\oplus \hat{\pi} $ is DPI.

As before, given a unital, injective, finite dimensional $*$-representation
$\pi:A_1*_{\mathbb{C}^2}A_2 \to \mathbb{B}(H)$, we let
\begin{eqnarray*}
B_0&:=&\pi^{(0)}(\mathbb{C}^2)' \simeq M_{k_1}\oplus M_{k_2},\\
B_1&:=&\pi^{(1)}(A_1)' \simeq  M_p,\\
B_2&:=&\pi^{(2)}(A_2)'\simeq \oplus_{i=1}^{N_2(1)}M_{q_i(1)} \oplus \oplus_{i=1}^{N_2(2)}M_{q_i(2)}.
\end{eqnarray*}
The corresponding Bratelli diagram, for $B_1, B_2$ and  $B_0$ looks like:
\[
\xymatrix{ 
q_1(1)\ar[dr]^1 &\cdots& q_{N_2(1)}(1)\ar[dl]_1 & &q_1(2)\ar[dr]^1&\cdots&q_{N_2(2)}(2) \ar[dl]_1 \\
                 & k_1 &                       & &              &  k_2 &                \\
                &        &          & p\ar[ull]^1\ar[urr]_1 &  &        &    & 
 }
\]

Notice that the weight of each arrow comes from the matrices $\mu(A_i,\mathbb{C}^2)$ and the
fact that $\mu(B_0,B_i)=\mu(A_i,\mathbb{C}^2)^t$. Then $k_1=k_2=p$.

\begin{rmk}

Consider  the representations 
\begin{eqnarray*}
\hat{\pi}_1&=&  id_{M_2}^{(\hat{p})} ,\\
\hat{\pi}_2&=&\oplus_{i=1}^{N_2(1)}id_{\mathbb{C}}^{(\hat{q}_i(1))}\oplus \oplus_{i=1}^{N_2(2)}id_{\mathbb{C}}^{(\hat{q}_i(2))}.
\end{eqnarray*}

The free product $\hat{\pi}:=\hat{\pi}_1*\hat{\pi}_2$ is well defined if and only if
\begin{eqnarray*}
\hat{p}&=&\sum_i \hat{q}_i(1)= \sum_i \hat{q}_i(2) .\\
\end{eqnarray*}

Indeed, this is equivalent to  $\hat{\pi}_1\circ \gamma_1=\hat{\pi}_2\circ \gamma_2$.
Notice that also this conditions implies both, $\hat{\pi}_1$ and $\hat{\pi}_2$, are representations
on $M_N$, where $N=\sum_i\hat{q}_i(1)+\sum_{i}\hat{q}_i(2)=2\hat{p}$
(the 2 in front of $\hat{p}$ comes from the block $M_2$ ).

Lastly, the Bratelli diagram for $(\pi \oplus \hat{\pi})^{(1)}(A_1)', (\pi \oplus \hat{\pi})^{(2)}(A_2)', (\pi \oplus \hat{\pi})^{(0)}(\mathbb{C}^2)',$ is

\[
\xymatrix{ 
\tilde{q}_1(1)\ar[dr]^1 &\cdots&\tilde{q}_{N_2(1)}(1) \ar[dl]_1 & 
&\tilde{q}_1(2)\ar[dr]^1&\cdots&\tilde{q}_{N_2(2)}(2) \ar[dl]_1 \\
                 & \tilde{k}_1 &                       & &           & \tilde{k}_2 &                \\
    &  &    & 
\tilde{p}\ar[ull]^1\ar[urr]_1 &   &   &    & 
 }
\]

where $\tilde{q}_i(j)=q_i(j)+\hat{q}_i(j),\tilde{k}_i=k_i + \hat{p}=p+\hat{p}$.

\end{rmk}

\begin{lema}\label{Lema:SpecialRep}
Given $\{q_i(1)\}_{i=1}^{N_2(1)}$ and $\{q_i(2)\}_{i=1}^{N_2(2)}$, such that
\[
\sum_i q_i(1)=\sum_i q_i(2)=p,
\]
there are positive integers $\{\hat{q}_i(1)\}_{i=1}^{N_2(1)}, \{\hat{q}_i(2)\}_{i=1}^{N_2(2)}$
and $\hat{p}$ such that 
\begin{enumerate}[(i)]
\item $q_i(1)+\hat{q}_i(1)$ is independent of $i$,
\item $q_i(2)+\hat{q}_i(2)$ is independent of $i$,
\item and

\begin{eqnarray*}
\hat{p}&=&\sum_i \hat{q}_i(1)=\sum_i \hat{q}_i(2).
\end{eqnarray*}

Note: in general, $\hat{q}_i(1)+q_i(1)\not= \hat{q}_i(2)+q_i(2)$.
\end{enumerate}

\end{lema}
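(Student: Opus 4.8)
The plan is to reverse-engineer the three conditions into explicit formulas for the unknowns. Conditions (i) and (ii) say exactly that $q_i(s)+\hat q_i(s)$ does not depend on $i$; call this common value $M_s$, for $s=1,2$. Then necessarily $\hat q_i(s)=M_s-q_i(s)$, and the requirement that every $\hat q_i(s)$ be a positive integer is equivalent to choosing $M_s$ to be an integer with $M_s>\max_i q_i(s)$. So the whole problem reduces to producing two integers $M_1,M_2$, each exceeding the corresponding maximum, that are compatible with condition (iii).

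Next I would translate condition (iii) into a single Diophantine constraint on $M_1$ and $M_2$. Summing $\hat q_i(s)=M_s-q_i(s)$ over $i$ and using $\sum_i q_i(s)=p$ gives
\[
\sum_{i}\hat q_i(s)=N_2(s)M_s-p,\qquad s=1,2.
\]
Hence setting $\hat p:=N_2(1)M_1-p=N_2(2)M_2-p$ makes (iii) hold, and this is possible precisely when
\[
N_2(1)M_1=N_2(2)M_2 .
\]
Note that once $M_s>\max_i q_i(s)$ we automatically obtain $\hat p=\sum_i\hat q_i(1)\ge N_2(1)\ge 1>0$, so positivity of $\hat p$ comes for free.

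Finally I would solve $N_2(1)M_1=N_2(2)M_2$ while keeping $M_1,M_2$ large. Let $L=\lcm(N_2(1),N_2(2))$ and put $M_s=k\,L/N_2(s)$ for a positive integer $k$; then $N_2(1)M_1=kL=N_2(2)M_2$ for every $k$, and choosing $k$ large enough forces $M_s=kL/N_2(s)>\max_i q_i(s)$ for both $s=1,2$. With this choice, $\hat q_i(s)=M_s-q_i(s)$ and $\hat p=kL-p$ are the required positive integers, and (i)--(iii) hold by construction. There is no genuine obstacle here: the only point requiring a moment's care is arranging simultaneous positivity of all the $\hat q_i(s)$ together with the single coupling equation $N_2(1)M_1=N_2(2)M_2$, and passing to a common multiple $kL$ with $k$ large handles both at once.
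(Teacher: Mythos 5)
Your proof is correct and follows essentially the same route as the paper: both reduce the problem to choosing common target values $M_s=q_i(s)+\hat q_i(s)$ subject to the single coupling equation $N_2(1)M_1=N_2(2)M_2$, and then take a large integer parameter to guarantee positivity. The paper realizes this with $M_1=Qq(1)q(2)N_2(2)$ and $M_2=Qq(1)q(2)N_2(1)$ where $q(j)=\lcm_i(q_i(j))$, while your choice $M_s=kL/N_2(s)$ with $L=\lcm(N_2(1),N_2(2))$ is a marginally cleaner parametrization of the same construction.
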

\begin{proof}
For  $j=1,2$, let $q(j)=\lcm_i(q_i(j))$ and let
\begin{eqnarray*}
Q(1)&=&Qq(2)N_2(2),\\
Q(2)&=&Qq(1)N_2(1),
\end{eqnarray*}
where $Q$ is a positive integer that will be specified later on.

Define
\begin{eqnarray*}
\hat{q}_i(j)&:=&Q(j)q(j)-q_i(j),\\
\hat{p}&:=&Qq(1)q(2)N_2(1)N_2(2)-p.
\end{eqnarray*}

If we take $Q$ large enough, $\hat{q}_i(j)$ and $\hat{p}$ are positive. It is plain that
$\hat{q}_i(j)+q_i(j)$ is independent of $i$. We are left to show

\begin{equation}\label{Eqn:ConditionFromAmalgamation}
\sum_{i=1}^{N_2(j)}\hat{q}_i(j)=\hat{p},
\end{equation}
for $j=1,2$.

For $j=1$, the left hand side of (\ref{Eqn:ConditionFromAmalgamation}) equals
\begin{eqnarray*}
Q(1)q(1)N_2(1)-\sum_{i=1}^{N_2(1)}q_i(1)&=& Q(1)q(1)N_2(1)-p
\\
&=&Qq(1)q(2)N_2(1)N_2(2)-p=\hat{p}.
\end{eqnarray*}

Similarly for $j=2$.

\end{proof}

Notice that if $\hat{\pi}$ is the injective unital  $*$-representation of $A_1*_{\mathbb{C}^2}A_2$
induced by the numbers $\hat{q}_i(j),\hat{p}$ in the previous lemma, the Bratelli diagram for
$(\pi \oplus \hat{\pi})^{(1)}(A_1)', (\pi \oplus \hat{\pi})^{(2)}(A_2)' $ and $(\pi \oplus \hat{\pi})^{(0)}(\mathbb{C}^2)'$ is

\[
\xymatrix{ 
\tilde{q}(1)\ar[dr]^1 &\cdots&\tilde{q}(1) \ar[dl]_1 & 
&\tilde{q}(2)\ar[dr]^1&\cdots&\tilde{q}(2) \ar[dl]_1 \\
                 & \tilde{k}_1 &                       & &           & \tilde{k}_2 &                \\
 &    &    & 
\tilde{p} \ar[ull]^1\ar[urr]_1 &   &   &   & 
 }
\]
where $\tilde{q}(j)=q_i(j)+\hat{q}_i(j)$ and $\tilde{k}_1=\tilde{k}_2=\tilde{p}=p+\hat{p}$.

To finish this section, given $\pi$ we construct $\hat{\pi}$ using Lemma \ref{Lema:SpecialRep} and  show that if $N_2(1) \geq 3$ and $N_2(2) \geq 2$, then the $*$-representation  $\pi \oplus \hat{\pi}$ is DPI.

\begin{prop}
The C*-algebras,
\[
M_2*_{\mathbb{C}^2}(\mathbb{C}^{n_2(1)}\oplus \mathbb{C}^{n_2(2)})
\]
where  $N_2(1)\geq 3$ and $N_2(2) \geq 2$, are primitive.

\end{prop}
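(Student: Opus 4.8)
The plan is to invoke Lemma \ref{Lemma:CompletionCriteria}. First I would record that $A_1*_{\mathbb{C}^2}A_2$ is RFD: assigning each minimal projection of $\mathbb{C}^2$ weight $1/2$ extends to a faithful trace on $M_2$, and splitting these weights uniformly across the two blocks of $A_2$ gives a faithful trace on $A_2$ with the same restriction to $\mathbb{C}^2$, so condition (i) of the cited embedding results holds. By the reductions it is then enough to show that for every injective unital finite dimensional $*$-representation $\pi$ there is a $\hat\pi$ with $\pi\oplus\hat\pi$ DPI. Using Lemma \ref{Lema:SpecialRep} I would choose $\hat\pi$ so that the commutants $B_1=(\pi\oplus\hat\pi)^{(1)}(A_1)'$, $B_2=(\pi\oplus\hat\pi)^{(2)}(A_2)'$ and $B_0=(\pi\oplus\hat\pi)^{(0)}(\mathbb{C}^2)'$ become uniform, namely $B_1\simeq M_{\tilde p}$, $B_2\simeq M_{\tilde q(1)}^{(N_2(1))}\oplus M_{\tilde q(2)}^{(N_2(2))}$ and $B_0\simeq M_{\tilde p}\oplus M_{\tilde p}$, where $\mu_1=[1\ 1]$ and the shape of $\mu_2$ force the Bratteli relations $\tilde p=N_2(1)\tilde q(1)=N_2(2)\tilde q(2)$.

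Next I would verify the two hypotheses of Proposition \ref{Prop:DensityHardVersion} for $(B_1,B_2;B_0)$. Substituting $\tilde q(s)=\tilde p/N_2(s)$ turns the easy inequality $\dim\mathbb{U}(B_1)+\dim\mathbb{U}(B_2)\le\dim\mathbb{U}(B_0)$ into $\tfrac{1}{N_2(1)}+\tfrac{1}{N_2(2)}\le 1$, which holds strictly since $\tfrac{1}{3}+\tfrac{1}{2}=\tfrac{5}{6}$ under $N_2(1)\ge 3$ and $N_2(2)\ge 2$.

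The step I expect to be the main obstacle is the hard inequality $d(C,u)<\dim\mathbb{U}(B_0)$ for every unital abelian proper $C\subseteq B_1$ with $\dim C=l\ge 2$ and every $u\in\mathbb{U}(B_0)$ with $C\subseteq\Ad u(B_2)$. Following the bookkeeping of Propositions \ref{Prop:PrimitivityOnlyOnes} and \ref{MatrixMatrix}, I would write $\mu(B_1,C)=[a_r]_r$, $\mu(B_0,C)=[m_{j,r}]$ and $\mu(B_2,u^*Cu)=[b_{(s,i),r}]$; Lemma \ref{Lemma:MultiplicitiesRep} together with $\mu(B_0,B_1)=\mu_1^t$ and $\mu(B_0,B_2)=\mu_2^t$ gives $m_{1,r}=m_{2,r}=a_r$, $\sum_{i=1}^{N_2(s)}b_{(s,i),r}=a_r$ for each $s$, and $\sum_r a_r=\tilde p$. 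After substituting $m_{j,r}=a_r$ the target inequality collapses to $\sum_r a_r^2-\sum_{s,i,r}b_{(s,i),r}^2<\tilde p^2\bigl(1-\tfrac{1}{N_2(1)}-\tfrac{1}{N_2(2)}\bigr)$. Cauchy--Schwarz yields $\sum_{i=1}^{N_2(s)}b_{(s,i),r}^2\ge a_r^2/N_2(s)$, so the left-hand side is at most $\bigl(1-\tfrac{1}{N_2(1)}-\tfrac{1}{N_2(2)}\bigr)\sum_r a_r^2$; since this factor is positive and $\sum_r a_r^2<(\sum_r a_r)^2=\tilde p^2$ strictly (because $l\ge 2$ and each $a_r\ge 1$), the strict inequality follows. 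With both hypotheses of Proposition \ref{Prop:DensityHardVersion} met, $\Delta(B_1,B_2;B_0)$ is dense in $\mathbb{U}(B_0)$, hence $\pi\oplus\hat\pi$ is DPI, and Lemma \ref{Lemma:CompletionCriteria} gives that $M_2*_{\mathbb{C}^2}(\mathbb{C}^{N_2(1)}\oplus\mathbb{C}^{N_2(2)})$ is primitive.
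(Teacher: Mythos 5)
Your proposal is correct and takes essentially the same route as the paper: it uniformizes the commutants via Lemma \ref{Lema:SpecialRep} so that $B_1\simeq M_{\tilde p}$, $B_2\simeq M_{\tilde q(1)}^{(N_2(1))}\oplus M_{\tilde q(2)}^{(N_2(2))}$, $B_0\simeq M_{\tilde p}\oplus M_{\tilde p}$, and then checks both hypotheses of Proposition \ref{Prop:DensityHardVersion} using the multiplicity relations $m_{1,r}=m_{2,r}=a_r$ and $\sum_{i=1}^{N_2(s)}b_{(s,i),r}=a_r$ together with blockwise Cauchy--Schwarz, getting strictness from $l\ge 2$ and the positivity of $1-\tfrac{1}{N_2(1)}-\tfrac{1}{N_2(2)}$, exactly as in the paper's argument. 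Your bookkeeping is in fact marginally cleaner, since substituting $m_{j,r}=a_r$ for both $j$ at once makes the paper's ``without loss of generality $\sum_r m_{2,r}^2\le \sum_r m_{1,r}^2$'' step superfluous.
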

\begin{proof}

As we mentioned before, it suffices to show that  $\pi \oplus \hat{\pi}$ is DPI and to show it we will use Proposition \ref{Prop:DensityHardVersion}. As usual, $B_0=(\pi \oplus \hat{\pi})(\mathbb{C}^2)', B_1=(\pi \oplus \hat{\pi})(A_1)'$
and $B_2=(\pi \oplus \hat{\pi})(A_2)'$. 

For simplicity, let $p=\tilde{p},q_1=\tilde{q}(1)$ and $q_2=\tilde{q}(2)$. The corresponding Bratelli diagram looks (the numbers within parenthesis indicate how many direct summands
we have)

\[
\xymatrix{ 
\tilde{q}(1)^{( N_2(1))}\ar[d]^1 & &\tilde{q}(2)^{(N_2(2))}\ar[d]_1 \\ 
        p  &                    & p &                \\
 &              \tilde{p}\ar[ur]_1\ar[ul]^1 & 
 }
\]

Thus, we need to show
\begin{equation}\label{Eqn:MatrixMatrixEasyDos}
\dim(\mathbb{U}(B_1))+\dim(\mathbb{U}(B_2))\leq \mathbb{U}(B_0)
\end{equation}
and
\begin{equation}\label{Eqn:MatrixMatrixHardDos}
\left. 
\begin{array}{ll}
\hspace{0.4cm} \dim(\mathbb{U}(B_1)) - \dim(\mathbb{U}(B_1\cap C'))  &   \\
  +\dim(\mathbb{U}(B_2))-  \dim(\mathbb{U}(\Ad u(B_2)\cap C')) &   <  \dim(\mathbb{U}(B_0))\\
  +\dim(\mathbb{U}(B_0\cap C'))  &  \\
\end{array}
\right\}
\end{equation}
where $C$ is an abelian unital C*-subalgebra of $B_1$ with $\dim(C)=l \geq 2$, $C'$ is the commutant relative to $\mathbb{B}(H\oplus \hat{H})$ and $u\in \mathbb{U}(B_0)$ is such that $C\subseteq Ad u(B_2)$.

Next we prove  (\ref{Eqn:MatrixMatrixEasyDos}). We have that $\dim(\mathbb{U}(B_1))=p^2, \dim(\mathbb{U}(B_1))=N_2(1)q_1^2+N_2(2)q_2^2$ and $\dim(\mathbb{U}B_0))=2p^2$. But taking into account that $N_2(1)q_1=N_2(2)q_2=p$, we simplify and get  $\dim(\mathbb{U}(B_2))=\frac{p^2}{N_2(1)}+\frac{p^2}{N_2(2)}$. Hence
(\ref{Eqn:MatrixMatrixEasyDos}) follows from the fact that $N_2(1)\geq 3$ and $N_2(2)\geq 2$.

Now, let us denote 

\begin{eqnarray*}
\mu(B_1,C)&=&[a_{1,r}]_{ 1 \leq r \leq l},\\
\mu(B_2,u^*Cu)&=&[b_{i,r}]_{1\leq i \leq l_2, 1\leq r \leq l},\\
\mu(B_0,C)&=&[m_{j,r}]_{1\leq j \leq 2, 1\leq r \leq l}.\\
\end{eqnarray*}

With this notation (\ref{Eqn:MatrixMatrixHardDos}) becomes
\[
\frac{1}{N_2(1)}p^2+\frac{1}{N_2(2)}p^2+p^2 +\sum_{ \substack{1 \leq j \leq  2  \\  1 \leq r  \leq l } }m_{j,r}^2-
\sum_{\substack{1\leq r \leq l } }a_{1,r}^2-
\sum_{\substack{1\leq i \leq l_2 \\ 1\leq r \leq l } }b_{i,r}^2< 2p^2,
\]
or, equivalently,
\[
\sum_{j,r }m_{j,r}^2-
\sum_{r }a_{1,r}^2-
\sum_{i,r }b_{i,r}^2< \bigg(1-\frac{1}{N_2(1)}-\frac{1}{N_2(2)} \bigg)p^2.
\]

With no loss of generality we may assume $\sum_r m_{2,r}^2 \leq \sum_r m_{1,r}^2$.

Since $\mu_1(1,j)=1$ for all  $j$, $a_{1,r}=m_{j,r}$ for all $j$ and $r$. Hence we simplify the previous inequality to get
\begin{equation}\label{Eqn:SimplificationHard}
\sum_{r}m_{2,r}^2-\sum_{i,r}b_{i,r}^2 < \bigg(1-\frac{1}{N_2(1)}-\frac{1}{N_2(2)} \bigg)p^2.
\end{equation}

Now, notice that $\mu_2(1,j)=0$ for $j\geq N_2(1)+1$ and 1 otherwise. Similarly,  $\mu_2(2,j)=0$ for $j\leq N_2(1)$ and
1 otherwise (observe that $l_2=N_2(1)+N_2(2)$). Thus, we get
\[
m_{1,r}=\sum_{i=1}^{N_2(1)}b_{i,r}, m_{2,r}=\sum_{i=N_2(1)+1}^{l_2}b_{i,r}.
\]
In consequence
\begin{eqnarray*}
\sum_{i=1}^{N_2(1)}b_{i,r}^2 \geq \frac{m_{1,r}^2}{N_2(1)},\\
\sum_{i=N_2(1)+1}^{l_2}b_{i,r}^2 \geq \frac{m_{2,r}^2}{N_2(2)},
\end{eqnarray*}

which in turn bring the estimate
\begin{eqnarray*}
\sum_r m_{2,r}^2-\sum_{i,r}b_{i,r}^2 &\leq&   \sum_{r}m_{2,r}^2-\frac{1}{N_2(1)}\bigg(\sum_{r=1}^l m_{2,r}^2\bigg)-\frac{1}{N_2(1)}\bigg(\sum_{r=1}^l m_{1,r}^2  \bigg)
\\
&\leq& \bigg(1-\frac{1}{N_2(1)}-\frac{1}{N_2(2)} \bigg) \bigg(\sum_{r=1}^l m_{2,r}^2\bigg).
\end{eqnarray*}

Lastly, since $\sum_r m_{2,r}=p$ and $l\geq 2$, (\ref{Eqn:SimplificationHard}) holds.

\end{proof}

\section*{Acknowledgments}
Research supported by DGAPA-UNAM.


\begin{thebibliography}{0}


\bibitem{Bozejko-Leinert-Speicher}
M. Bo\.zejko,  M. Leinert and R. Speicher,
Convolution and limit theorems for
conditionally free random variables,
{\it Pacific J. Math. } 
{\bf 175(2)}(1996) 357--388. 


\bibitem{Bedos&Omland-Amenable}
E. B\'edos   and  T. \AA. Omland,
Primitivity of some full group C*-algebras,
{\it Banach J.  Math.Anal.}
{\bf 5}(2011)  44--58.

\bibitem{Bedos&Omland-Modular}
E. B\'edos   and  T. \AA. Omland,
The full group C*-algebra of the modular group is primitive,
{\it Proc. Amer. Math. Soc. } {\bf 140(4)}  (2011) 1403--1411.


\bibitem{Murphy}
G. J. Murphy,
Primitivity conditions for full group C*-algebras,
{\it Bull. London Math. Soc.}
{\bf 35} (2003) 697--705.

\bibitem{Choi}
M.-D. Choi,
The full C*-algebra of the free group on two generators,
{\it Pacific J.  Math.} {\bf 87} (1980) 41--48.



\bibitem{Pedersen-Pullback&Pushout}
Gert K. Pedersen,
Pullback out and Pushout Constructions in C*-algebra Theory,
{\it J. Funct. Anal.} 
{\bf 167} (1999) 243--344.



\bibitem{Kaplansky}
I. Kaplansky,
The Structure of Certain Operator Algebras,
{\it Trans. Amer.  Math. Soc.} 
{\bf 70(2)} (1951) 219--255.

\bibitem{Dykema&Torres}
K. Dykema and F. Torres-Ayala,
Primitivity of unital full free product of finite dimensional C*-algebras,
Preprint : arxiv 1201.4892.

\bibitem{Takesaki}
M. Takesaki,
Theory of Operator Algebras I,
{\it Springer} (2002).


\bibitem{Dykema&Exel&Etal-EmbeddingsFullFreeProd}
S. Armstrong,  K. Dykema,  R. Exel and  H. Li,
On Embeddings of  Full  Amalgamated Free Product of 
C*-algebras,
{\it Proc. Amer. Math. Soc.}  {\bf 132(7)} (2004) 2019--2030.

\bibitem{Brown&Dykema}
N. Brown  and K. Dykema, 
Popa algebras in free group factors, 
{\it J. reine angew. Math.} {\bf 573} (2004) 157--180.
\end{thebibliography}
\end{document}